    \ifnum\bookmarkget{level}>3 %
      \renewcommand*{\numberline}[1]{}%
\definecolor{darkblue}{rgb}{0.0,0.0,0.3}
\author{HAO LEE}
\address{
	Department of Mathematics\\
	University of Chicago\\
	Chicago, Illinois}
\email{haolee@math.uchicago.edu}
\begin{document}

\title{Towards the Derived Jacquet-Emerton Module Functor}


\global\long\def\a{\mathbb{A}}
\global\long\def\c{\mathbb{C}}
\global\long\def\C{\mathcal{C}}
\global\long\def\cp{\c_{p}}
\global\long\def\d{\partial}
\global\long\def\eps{\epsilon}
\global\long\def\f{\mathbb{F}}
\global\long\def\F{\mathcal{F}}
\global\long\def\fp{\mathbb{F}_{p}}
\global\long\def\fq{\mathbb{F}_{q}}
\global\long\def\g{\mathfrak{g}}
\global\long\def\G{\mathbb{G}}
\global\long\def\h{\mathbb{H}}
\global\long\def\H{\mathfrak{h}}
\global\long\def\l{\mathcal{L}}
\global\long\def\L{\mathbb{L}}
\global\long\def\n{\mathbb{N}}
\global\long\def\N{\mathfrak{n}}
\global\long\def\m{\mathbb{M}}
\global\long\def\M{\mathfrak{m}}
\global\long\def\p{\mathfrak{p}}
\global\long\def\P{\mathbb{P}}
\global\long\def\q{\mathbb{Q}}
\global\long\def\qp{\q_{p}}
\global\long\def\o{\mathcal{O}}
\global\long\def\ok{\mathcal{O}_{K}}
\global\long\def\t{\mathbb{T}}
\global\long\def\u{\mathfrak{u}}
\global\long\def\w{\omega}
\global\long\def\z{\mathbb{Z}}
\global\long\def\zp{\z_{p}}
\global\long\def\sub{\subseteq}
\global\long\def\Ad{\textnormal{Ad}}
\global\long\def\isom{\cong}
\global\long\def\inj{\hookrightarrow}
\global\long\def\surj{\twoheadrightarrow}
\global\long\def\map{\mapsto}
\global\long\def\la{\leftarrow}
\global\long\def\ra{\rightarrow}
\global\long\def\gal{\textnormal{Gal}}
\global\long\def\lp{\llbracket}
\global\long\def\rp{\rrbracket}
\global\long\def\ind{\mbox{Ind}}
\global\long\def\da{\downarrow}
\global\long\def\ua{\uparrow}
\global\long\def\gqp{G_{\q_{p}}}
\global\long\def\ord{\textnormal{ord}}
\global\long\def\an{\textnormal{an}}
\global\long\def\An{\textnormal{-an}}
\global\long\def\La{\textnormal{la}}
\global\long\def\rep{\textnormal{Rep}}
\global\long\def\fs{\textnormal{fs}}
\global\long\def\vn{(V_{\h_{n}^{\circ}\textnormal{-an}})^{\prime}_{b}}
\global\long\def\vln{(V_{\h(z^{-1})_{n}^{\circ}\textnormal{-an}})^{\prime}_{b}}
\global\long\def\vhn{(V_{\h(z)_{n}^{\circ}\textnormal{-an}})^{\prime}_{b}}
\global\long\def\dn{D(H_{0},\h_{n}^{\circ})}
\global\long\def\res{\textnormal{res}}
\global\long\def\im{\textnormal{im}}
\global\long\def\ind{\textnormal{Ind}}
\global\long\def\pol{\textnormal{pol}}
\global\long\def\Ext{\textnormal{Ext}}
\global\long\def\Tor{\textnormal{Tor}}
\global\long\def\sm{\textnormal{sm}}
\global\long\def\sym{\textnormal{Sym}}
\global\long\def\alg{\textnormal{alg}}
\global\long\def\bal{\textnormal{bal}}


\newtheorem{theorem}{Theorem}[section]
\newtheorem{lemma}[theorem]{Lemma}
\newtheorem{proposition}[theorem]{Proposition}
\newtheorem{corollary}[theorem]{Corollary}
\newtheorem{conjecture}[theorem]{Conjecture}
\theoremstyle{definition}
\newtheorem{definition}[theorem]{Definition}
\newtheorem{claim}[theorem]{Claim}
\newtheorem{remark}[theorem]{Remark}
\numberwithin{equation}{section}
\newtheorem{example}[theorem]{Example}

\begin{abstract}
Let $G$ be a $p$-adic Lie group associated to associated to a connected reductive group over $\qp$. Let $P$ 
be a parabolic subgroup of $G$ and let $M$ be a Levi quotient of $P$. In this paper, we define a $\delta$-functor $H^{\star}J_{P}$ from the category of admissible locally analytic $G$-representations to the category of essentially admissible locally analytic $M$-representations that extends the Jacquet-Emerton module functor $J_{P}$ defined in \cite{Jacquet1}. 
\end{abstract}

\maketitle

\tableofcontents

\vspace{3mm}


\section{Introduction}\label{sec:intro}
Let $\mathbb{G}$ be a connected reductive algebraic group over $\qp$. Let $\mathbb{P}$ be a parabolic subgroup of $\mathbb{G}$ with unipotent radical $\mathbb{N}$ and let $\m$ be a Levi quotient of $\mathbb{P}$. Let $G=\mathbb{G}(\qp)$, $P=\mathbb{P}(\qp)$, $N=\mathbb{N}(\qp)$ and $M=\m(\qp)$. In \cite{Jacquet1}, Emerton introduced the theory of the Jacquet-Emerton module functor $J_{P}$, which extends the theory of the classical Jacquet module functor on smooth $G$-representations to essentially admissible locally analytic $G$-representations. The goal of this paper is to introduce a $\delta$-functor $H^{\star}J_{P}$ extending $J_{P}$ on the category of admissible locally analytic $G$-representations, which we hope to be universal, in some suitable sense. 

Some of the difficulties with such a theory of derived Jacquet-Emerton modular functor lie in the deficiencies in the various important subcategories of locally analytic $G$-representations one might want to consider. This is also why the notion of universality is difficult to pin down. For example, the full subcategories of essentially admissible and admissible $G$-representations are abelian \cite{ST_dist}, but do not have enough injectives. Meanwhile, the full subcategory of strongly admissible $G$-representations is not abelian. We attempt to remedy this problem by computing homology groups in the category of coherent modules on certain distribution algebras, which has the benefit that all linear maps are automatically continuous and strict (see \cite[Appendix A]{Jacquet2}). In this way, we can retain various topological properties while working in the context of abstract homological algebra. 

More specifically, let $V$ be an admissible locally analytic $G$-representation over a finite extension $K$ of $\qp$. Fix a compact open subgroup $N_{0}$ of $N$, and $H_{0}$ of $G$ containing $N_{0}$. Let $\{H_{n}\}_{n\geq 0}$ be a choice of decreasing sequence of rigid analytic open normal subgroups of $H_{0}$ satisfying some nice properties. In particular, we assume that these subgroups form a basis of open neighbourhoods of the identity. For each $n\geq 0$, let $\h_{n}$ denote the rigid analytic group underlying $H_{n}$ in $\G$ and let $\h_{n}^{\circ}=\cup_{m>n}\h_{m}$. Let $V_{\h_{n}^{\circ}\An}$ be the subset of $V$ consisting of $\h_{n}^{\circ}$-analytic vectors. There is then a topological isomorphism $\underset{\underset{n}{\ra}}{\lim} V_{\h_{n}^{\circ}}\cong V$. 

Define $D(H_{0},\h_{n}^{\circ})$ to be the strong dual of $C^{\La}(H_{0},K)_{\h_{n}^{\circ}\An}$, which is the set of $K$-valued locally analytic functions of $H_{0}$ that are analytic on $\h_{n}^{\circ}$. For a well-chosen sequence of rigid analytic subgroups $\{H_{n}\}_{n\geq 0}$, the distribution algebras $D(H_{0},\h_{n}^{\circ})$ are coherent rings of compact type. The strong duals $(V_{\h_{n}^{\circ}\An})^{\prime}_{b}$ of $V_{\h_{n}^{\circ}\An}$ are finitely presented $D(H_{0},\h_{n}^{\circ})$-modules. Furthermore, finitely presented modules over $D(H_{0},\h_{n}^{\circ})$-modules have a unique structure of a compact type space making it a topological $D(H_{0},\h_{n}^{\circ})$-module, and the $D(H_{0},\h_{n}^{\circ})$-linear maps between finitely presented modules are automatically continuous and strict with respect to this topology \cite[Prop. A.10]{Jacquet2}. 

We define $H_{\star}(N_{0},(V_{\h_{n}^{\circ}\An})^{\prime}_{b})$ to be the usual group homology of $N_{0}$-coinvariants computed in the abelian category of abstract coherent $D(H_{0},\h_{n}^{\circ})$-modules. Let $H^{\star}(N_{0},V_{\h_{n}^{\circ}\An})$ be the strong dual of $H_{\star}(N_{0},(V_{\h_{n}^{\circ}\An})^{\prime}_{b})$. Define $H^{\star}J_{P}(V)$ to be \begin{equation*}
    H^{\star}J_{P}(V)= (\underset{\underset{n}{\ra}}{\lim} H^{\star}(N_{0},V_{\h_{n}^{\circ}\An}))_{\textnormal{fs}},
\end{equation*}
where $(\cdot)_{\textnormal{fs}}$ denotes the finite slope part functor introduced in \cite{Jacquet1}. The family of functors $H^{\star}J_{P}$ is a $\delta$-functor from the category of admissible locally analytic $G$-representations to the category of essentially admissible locally analytic $G$-representations. In degree zero, $H^{0}J_{P}(V)= (V^{N_{0}})_{\textnormal{fs}}$, which is exactly the definition of the Jacquet-Emerton module functor. Additionally, we prove in Theorem \ref{thm:lie_cohom} that there is a natural isomorphism \begin{equation}\label{eq:intro_lim_n_cohom}
    \underset{\underset{n}{\ra}}{\lim} H^{\star}(N_{0},V_{\h_{n}^{\circ}\An}) \cong H^{\star}(\mathfrak{n},V)^{N_{0}},
\end{equation} where $H^{\star}(\mathfrak{n},V)$ is the Lie algebra cohomology. This suggests that our construction is natural. 

Using the formula (\ref{eq:intro_lim_n_cohom}), we demonstrate how one can more easily compute the derived Jacquet-Emerton modules of Orlik-Strauch representations in the case $G=SL_{2}(\qp)$. Suppose $M$ is an object in the category $\o^{\overline{\mathfrak{p}}}$ and $\psi$ is a smooth character of the torus $T$. The Orlik-Strauch representation $\F_{\overline{P}}^{G}(M,\psi)$ is a closed subspace of a locally analytic induction from $\overline{P}$ to $G$ of a representation of $\overline{P}$. As such, we can view elements of $\F_{\overline{P}}^{G}(M,\psi)$ as locally analytic functions on $G$ satisfying some conditions. Let $\F_{\overline{P}}^{G}(M,\psi)(N)$ be the subspace of functions supported on $\overline{P}N$ and let $\F_{\overline{P}}^{G}(M,\psi)_{e}$ be the stalk of $\F_{\overline{P}}^{G}(M,\psi)(N)$ at the identity as defined in \cite{Jacquet2}. The key fact is that \begin{align*}
    H^{i}(\N,\F_{\overline{P}}^{G}(M,\psi)(N)) &= H^{i}(\N,\F_{\overline{P}}^{G}(M,\psi)^{\text{lp}}(N)) \text{ and}\\
    H^{i}(\overline{\N},\F_{\overline{P}}^{G}(M,\psi)_{e}) &= H^{i}(\overline{\N},\F_{\overline{P}}^{G}(M,\psi)^{\text{pol}}_{e})
\end{align*} for each $i$, where the superscript lp denotes the locally polynomial part, and the superscript pol denotes the polynomial part (see \cite{Jacquet2}). This can be seen as a generalization of the $I_{\overline{P}}^{G}$ functor introduced in \cite{Jacquet2}. In the proof of \cite[Prop. 4.2]{Breuil_socle2}, Breuil proved that there are isomorphisms \begin{align*}
    \F_{\overline{P}}^{G}(M,\psi)^{\text{lp}}(N) & \cong \hom_{K}(M,K)^{\N_{\p}^{\infty}}\otimes_{K}C_{c}^{\text{sm}}(N,\psi) \text{ and}\\
    \F_{\overline{P}}^{G}(M,\psi)^{\text{pol}}_{e} & \cong \hom_{K}(M,K)^{\N_{\p}^{\infty}}\otimes_{K}\psi.
\end{align*} The dual vectors of $M$ generate the space $\hom_{K}(M,K)^{\N_{\p}^{\infty}}$. In summary, these observations allow us to reduce the problem of computing the $\N$ and $\overline{\N}$ cohomology of the Orlik-Strauch representation $\F_{\overline{P}}^{G}(M,\psi)$ to computing the $\N$ and $\overline{\N}$ cohomology of the dual of $M$ in the category $\o^{\overline{\mathfrak{p}}}$, which is often simple. It seems likely that this formalism should hold for general $p$-adic reductive groups as well.

\textit{The arrangement of the paper.} In Section $2$, we recall the definition of some important distribution algebras introduced in \cite{ST_dist} and \cite{analytic}. The main results are Lemma \ref{lem:Hd_coinv} and Theorem \ref{thm:flat}. In Section 3, we define the $\delta$-functor $H^{\star}J_{P}(V)$. Section 4 presents the proof that $H^{\star}J_{P}(V)$ is a $\delta$-functor from the category of admissible locally analytic $G$-representations to the category of essentially admissible locally analytic $M$-representations. In section 5, we explicitly compute the derived Jacquet-Emerton modules of some Orlik-Strauch representations in the case of $G=SL_{2}(\qp)$. In the final section, we compute some extension classes using derived Jacquet-Emerton modules and the adjunction formula of \cite[Lem. 0.3]{Jacquet2}. 

\textit{Notation and conventions.} We fix some notation that will be used throughout the whole paper. Let $p$ be a fixed prime number. Let $\mathbb{G}$ be a connected reductive linear algebraic group over $\qp$. Let $\mathbb{P}$ be a parabolic subgroup of $\mathbb{G}$, and $\overline{\mathbb{P}}$ a choice of opposite parabolic. Let $\mathbb{N}$ and $\overline{\mathbb{N}}$ be their respective unipotent radicals. Let $\mathbb{M}=\mathbb{P}\cap\overline{\mathbb{P}}$, which is a lift of the Levi quotient of $\mathbb{P}$. Let $G=\mathbb{G}(\qp)$, $P=\mathbb{P}(\qp)$, $\overline{P}=\overline{\mathbb{P}}(\qp)$, $N=\mathbb{N}(\qp)$, $\overline{N}=\overline{\mathbb{N}}(\qp)$ and $M=\mathbb{M}(\qp)$. Let $\g$, $\N$, $\overline{\N}$ and $\M$ denote the Lie algebras associated to $G$, $N$, $\overline{N}$ and $M$ respectively. Let $K$ be a finite extension of $\qp$.

\textit{Acknowledgements.}  I would like to thank my advisor Matthew Emerton for introducing me to this topic and for his continuous support throughout the project. I would also like to thank Weibo Fu, Brian Lawrence and Gal Porat for their insightful comments and for their careful reading of previous versions of this paper. This work was partially supported by the NSERC PGS-D program.  

\vspace{3mm}
\section{Preliminaries and Distribution Algebras}\label{sec:dist}

In this section, we present the definition of various distribution algebras and their explicit descriptions following \cite{analytic, ST_dist}. Then we prove two main results: that the coinvariants and the Hausdorff coinvarince of certain distribution algebras by an open compact subgroup of $N$ coincides; as well as showing that a natural morphism between distribution algebras is flat. We continue to use the notation established at the end of section \ref{sec:intro}.

Let $\g$ denote the Lie algebra of $G$. Let $\H$ be a $\zp$-lattice in $\g$, which is a free $\zp$-module of finite rank that spans $\g$ over $\qp$. By replacing $\H$ by $c\H$ for some $c\in \q_{p}^{\times}$ of large enough valuation, we can assume that the Baker-Campbell-Hausdorff formula converges on $\H$. This defines a rigid analytic group $\h$ \cite[LG Ch. V, \textsection 4]{Serre_CBH}. Let $H=\h(\qp)$; then there is an embedding of $\qp$-analytic groups $H\ra G$, realizing $H$ as an analytic open subgroup of $G$. The subgroups $H$ of $G$ arising in this way are called good analytic open subgroups. They form a basis of open neighbourhoods of the identity of $G$. 

For the purpose of this paper, we want to work with a decreasing sequence $\{H_{n}\}_{n\geq 0}$ of good analytic open subgroups satisfying Proposition  $4.1.6$ of \cite{Jacquet1}, which we will now describe. Suppose $H_{n}$ is a good analytic open subgroup of $G$ and let $\H_{n}$ be the corresponding $\zp$-sublattice of $\g$. Let $\N_{n}=\H_{n}\cap\N$, $\M_{n}=\H_{n}\cap\M$ and $\overline{\N}_{n}=\H_{n}\cap\overline{\N}$. Let $N_{n}=H_{n}\cap N$, $M_{n}=H_{n}\cap M$ and $\overline{N}_{n}=H_{n}\cap \overline{N}$. Let $\n_{n}$, $\m_{n}$ and $\overline{\n}_{n}$ be their respective rigid analytic Zariski closures in $\h_{n}$. 

\begin{definition}
The good analytic open subgroup $H_{n}$ is said to admit a rigid analytic Iwahori decomposition with respect to $\P$ and $\overline{\P}$ if:
\begin{enumerate}
    \item Under the exponential map $\H_{n}\ra \h_{n}$, the rigid analytic groups $\n_{n}$, $\m_{n}$ and $\overline{\n}_{n}$ are identified with $\N_{n}$, $\M_{n}$ and $\overline{\N}_{n}$ respectively. Additionally, the $\qp$-points of $\n_{n}$, $\m_{n}$ and $\overline{\n}_{n}$ are exactly $N_{n}$, $M_{n}$ and $\overline{N}_{n}$ respectively. 
    \item The multiplication map \[
       \overline{\n}_{n} \times \m_{n} \times  \n_{n} \ra \h_{n}
    \] induces a rigid analytic isomorphism. 
\end{enumerate}
\end{definition}

Let $\mathbb{Z}_{\mathbb{G}}$ and $\mathbb{Z}_{\mathbb{M}}$ be the centres of $\mathbb{G}$ and $\mathbb{M}$ respectively. Let $Z_G = \mathbb{Z}_{\mathbb{G}}(\qp)$ and $Z_M = \mathbb{Z}_{\mathbb{M}}(\qp)$. Let \[
    M^{+}=\left\{m\in M:\,mN_{0}m^{-1}\sub N_{0}\right\}.
\] Let $Z_{M}^{+} = M^{+}\cap Z_{M}$, which is a submonoid of $Z_{M}$ that generates $Z_{M}$ as a group \cite[Cor. 3.3.3]{Jacquet1}. 

\begin{proposition}\cite[Prop. 4.1.6]{Jacquet1}\label{prop:good_grps}
There is a decreasing sequence of good analytic open subgroups $\{H_{n}\}_{n\geq 0}$ of $G$ satisfying:
\begin{enumerate}
    \item The subgroups form a basis of open neighbourhoods of the identity. 
    \item Suppose $H_{0}$ is obtained from exponentiating a $\zp$-lattice $\mathfrak{h}$, then $H_{n}$ is obtained from exponentiating $p^{n}\H$ for all $n\geq 0$. 
    \item For all $n\geq 0$, the natural map $\h_{n+1}\ra \h_{n}$ induced by the inclusion $H_{n+1}\ra H_{n}$ is relatively compact.
    \item Each of the subgroups $H_{n}$ is normal inside $H_{0}$.
    \item Each of the subgroups $H_{n}$ admits a rigid analytic Iwahori decomposition with respect to $\P$ and $\overline{\P}$. 
    \item Suppose $z\in Z_{M}$ and $zN_{0}z^{-1}\sub N_{0}$ (ie. $z\in Z_{M}^{+}$), then $zN_{n}z^{-1}\sub N_{n}$ for all $n\geq 0$.
    \item Suppose $z\in Z_{M}$ and $z^{-1}\overline{N}_{0}z\sub \overline{N}_{0}$, then $z^{-1}\overline{N}_{n}z\sub \overline{N}_{n}$ for all $n\geq 0$. 
    \item There exists $z\in Z_{M}$ such that $z^{-1}\overline{N}_{0}z\sub \overline{N}_{0}$ and $zN_{0}z^{-1}\sub N_{0}$, and for each $n\geq 0$, the inclusions $z^{-1}\overline{N}_{n}z\sub\overline{N}_{n}$ given by condition (7) factor through $\overline{N}_{n+1}\sub\overline{N}_{n}$. 
\end{enumerate}
\end{proposition}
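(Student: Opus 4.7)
The plan is to reduce Proposition \ref{prop:good_grps} to the construction of a single $\zp$-lattice $\H\sub\g$ with suitable properties, and then set $H_{n}:=\exp(p^{n}\H)$. Once $\H$ is chosen correctly, conditions (1)--(7) follow mechanically from the exp--log dictionary, and only condition (8) needs a genuinely separate argument.

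First I would fix a smooth $\zp$-model of $\G$ coming from its reductive structure, yielding a $\zp$-form of $\g$ that is preserved by $\Ad(\G(\zp))$ and that splits compatibly with the triangular decomposition $\g = \overline{\N}\oplus\M\oplus\N$; call this integral form $\H^{\prime}$. Replacing $\H^{\prime}$ by $p^{k}\H^{\prime}$ for a sufficiently large $k$ produces a lattice $\H$ on which the Baker--Campbell--Hausdorff series converges and which is preserved under $\Ad(H_{0})$ for $H_{0}:=\exp(\H)$, since $H_{0}$ then lies inside a deep enough principal congruence subgroup and such subgroups preserve the integral form.

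With this $\H$ in hand I would verify the conditions in order. Conditions (1) and (2) are built into the construction since the $p^{n}\H$ form a basis of neighbourhoods of $0$ in $\g$. Condition (3) holds because the inclusion $p^{n+1}\H\sub p^{n}\H$ is strict, making the induced map of rigid analytic closed polydiscs $\h_{n+1}\ra \h_{n}$ relatively compact. Condition (4) follows from $\Ad(H_{0})\H\sub\H$, since then $hH_{n}h^{-1} = \exp(\Ad(h)p^{n}\H) = H_{n}$ for each $h\in H_{0}$. Condition (5) is the rigid analytic Iwahori decomposition; it follows from the direct sum splitting of $\H$, combined with the standard fact that BCH on the triangular summands realizes the open cell $\overline{\n}_{n}\times\m_{n}\times\n_{n}\ra\h_{n}$ as a rigid analytic isomorphism. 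For conditions (6) and (7): if $z\in Z_{M}$ satisfies $zN_{0}z^{-1}\sub N_{0}$, then exp--log duality translates this to $\Ad(z)(\H\cap\N)\sub\H\cap\N$, and scaling by $p^{n}$ gives $zN_{n}z^{-1}\sub N_{n}$; the argument for $\overline{N}$ is analogous.

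For condition (8) I would choose $z\in Z_{M}$ corresponding to a cocharacter of the centre of $\mathbb{M}$ that pairs strictly positively with every root of $\N$ and strictly negatively with every root of $\overline{\N}$; such a cocharacter exists whenever $\mathbb{P}$ is a proper parabolic, because the connected centre of $\mathbb{M}$ separates the root spaces in $\N$ and $\overline{\N}$. Taking a sufficiently high power of this $z$ ensures $\Ad(z)(\H\cap\N)\sub p(\H\cap\N)$ and $\Ad(z^{-1})(\H\cap\overline{\N})\sub p(\H\cap\overline{\N})$, which translates into the factoring $z^{-1}\overline{N}_{n}z \sub \overline{N}_{n+1}$ demanded in (8). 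The main obstacle in the whole construction is arranging for $\H$ to simultaneously (a) split under the triangular decomposition, (b) be preserved by $\Ad(H_{0})$, and (c) lie in the convergence locus of BCH; invoking the integral $\zp$-model of $\G$ at the outset is what lets all three conditions be met in one stroke, and without such an integral framework the production of a workable $\H$ becomes considerably more delicate.
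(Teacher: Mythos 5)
The paper states this proposition without proof, citing it directly from Emerton's \cite[Prop. 4.1.6]{Jacquet1}, so there is no in-paper argument to compare against. Your sketch is essentially Emerton's approach there: build everything from a single $\zp$-Lie lattice $\H\subseteq\g$ that splits as $\overline{\N}_{\zp}\oplus\M_{\zp}\oplus\N_{\zp}$ and is stable under $\Ad$ of the relevant monoid, then scale and set $H_n=\exp(p^n\H)$, so conditions (1)--(4) and (6)--(7) are formal from the exp--log dictionary, (5) from BCH on the summands, and (8) from a strictly $\N$-dominant cocharacter of $Z_{\mathbb M}$. One small caveat worth flagging: you open by invoking a smooth reductive $\zp$-model of $\G$, which need not exist if $\G$ is ramified over $\qp$; but nothing in your argument actually requires this -- it is enough to pick $\zp$-Lie lattices in $\overline{\N}$, $\M$, $\N$ separately, take their direct sum, and scale down until $[\H,\H]\subseteq\H$, BCH converges, and $\Ad(\exp(\H))$ stabilizes $\H$. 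With that replacement the argument goes through for arbitrary connected reductive $\G$ over $\qp$, and agrees with the construction in the cited reference.
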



We now describe some distribution rings introduced in \cite{analytic}, \cite{ST_dist} and \cite{ST_GL2} along with some important properties. Let $\h$ be a rigid analytic subgroup of $\mathbb{G}$ and let $H=\h(\qp)$. Let $C^{\an}(\h,K)$ be the space of $K$-valued rigid analytic functions on $\h$, and let $C^{\La}(H,K)$ be the set of $K$-valued locally analytic functions on $H$ as defined in \cite{analytic}. Equip these spaces with the right regular action of $H$, which is a locally analytic action. 

Suppose $\{H_{n}\}_{n\geq 0}$ is a decreasing sequence of good analytic open subgroups of $G$ satisfying Proposition \ref{prop:good_grps}. For each $n\geq 0$, let $\h^{\circ}_{n}=\bigcup_{m>n} \h_{m}$, which is a strictly $\sigma$-affinoid rigid analytic open subgroup of $\mathbb{G}$. 

Let $X_{1},...,X_{d}$ be a fixed $\zp$-basis of $\H$. There is an isomorphism $\psi :\,\zp^{d} \ra H_{0}$ given by \begin{equation}\label{eqn:psi}
	\psi (t_{1},...,t_{2}) = \exp(t_{1}X_{1})...\exp(t_{d}X_{d}).
\end{equation} By the theory of Mahler expansions \cite{Lazard}, the space of $K$-valued continuous functions on $\z_{p}^{d}$, denoted by $C(\z_{p}^{d},K)$, is the space of all series of the form \[
	f(x)=\sum_{I\in \z_{\geq 0}^{d}} c_{I} \binom{t}{I} =\sum_{I\in \z_{\geq 0}^{d}} c_{I} \prod_{j=1}^{d} \binom{t_{j}}{i_{j}}
\] where $c_{I}\ra 0$ as $|I| \ra \infty$. Here, $I$ runs over all $d$-tuples of non-negative integers $(i_{1},...,i_{d})$ and $|I|=i_{1}+...+i_{d}$. By pulling back via the isomorphism $\psi$, there is an analogous description of the space of continuous functions on $H$, which we denote by $C(\h,K)$. For each $n\geq 0$, a theorem of Amice \cite[III.1.3.9]{Amice} shows that there is an isomorphism of $K$-Banach spaces \[ 
	C^{\an}(\h_{n},K)\cong\left\{\sum_{I\in \z_{\geq 0}^{d}} c_{I} \binom{t}{I}:\, |c_{I}|\cdot\left|\left[\frac{|I|}{p^{-n}}\right]!\right|^{-1}\ra 0 \textnormal{ as } |I|\ra \infty \right\}.
\] 

Let $D^{\an}(\h_{n},K)$ be the strong dual of $C^{\an}(\h_{n},K)$, which is also a $K$-Banach algebra. The space $C^{\an}(\h_{n},K)$ admits a continuous action of the universal enveloping algebra $U(\g)$ coming from differentiating the right regular action of $H_{n}$. This induces a continuous $K$-linear map $U(\g)\ra D^{\an}(\h_{n},K)$ defined by \[
    \left<\mathfrak{t},f\right> = (\mathfrak{t}f)(e)
\]
where $e$ is the identity element of the group $H_{n}$. Let $\alpha_{j}=\exp(X_{j})-1$. The theory of finite differences shows that the dual of $\binom{t_j}{i_{j}}$ is exactly $\alpha_{j}^{i_{j}}$. Thus we get an isomorphism of Banach $K$-algebras \begin{equation}\label{eqn:dist1}
    D^{\an}(\h_{n},K)\cong \left\{\sum_{I\in \z_{\geq 0}^{d}} b_{I} \alpha^{I}:\, |b_{I}|\leq L\cdot\left|\left[\frac{|I|}{p^{-n}}\right]!\right|^{-1} \textnormal{ for some constant } L>0 \right\},
\end{equation} where $\alpha^{(i_{1},...,i_{d})}$ is defined to be the product $\prod_{j=1}^{d}\alpha_{j}^{i_{j}}$.

Following \cite{analytic}, define $C^{\an}(\h^{\circ},K)$ to be the projective limit $\underset{\overset{\la}{n}}{\lim }\,C^{\an}\left(\h_{n},K\right)$, which is a nuclear Fr\'{e}chet space (see \cite[Def. 1.1]{summary}). We denote its strong dual by $D^{\an}(\h^{\circ},K)$, which is a coherent ring of compact type and is isomorphic to the direct limit $\underset{\overset{\ra}{n}}{\lim }\,D^{\an}\left(\h_{n},K\right)$ \cite[Cor. 5.3.12]{analytic}. The isomorphisms above show that \begin{equation}\label{eqn:dist2}
    D^{\an}(\h^{\circ}_{0},K)\cong \left\{\sum_{I\in \z_{\geq 0}^{d}} b_{I} \alpha^{I}:\, |b_{I}|\leq L\cdot\left|\left[\frac{|I|}{p^{-n}}\right]!\right|^{-1} \textnormal{ for some constant } L>0 \textnormal{ and } n\geq 0 \right\}.
\end{equation} Compare this to the descriptions found in \cite{analytic}, \cite[\textsection 4]{ST_dist} and \cite[\textsection 2]{ST_GL2}.

By our choice of good analytic open subgroups $\{H_{n}\}_{n\geq 0}$ of $G$, every $H_n$ is a normal subgroup of $H=H_0$. There is an isomorphism \[
    C^{\La}(H_{0},K)_{\h_{n}\An} \cong \underset{{g\in H_{0}/H_{n}}}{\bigoplus} C^{\an}(g\h_{n},K). 
\] Following the notation established in \cite{analytic}, we define \[
    C^{\La}(H_{0},K)_{\h^{\circ}_{n}\An} = \underset{\underset{m>n}{\la}}{\lim} C^{\La}(H_{0},K)_{\h_{m}\An} \cong \underset{\underset{m}{\la}}{\lim} \underset{{g\in H_{0}/H_{m}}}{\bigoplus} C^{\an}(g\h_{m},K),
\] which is a nuclear Fr\'echet space. Let $D(H_{0},\h^{\circ}_{n})$ be the strong dual of $C^{\La}(H_{0},K)_{\h^{\circ}_{n}\An}$. Then there is an isomorphism of topological $K$-algebras \begin{equation}\label{eqn:dist3}
    D(H_{0},\h^{\circ}_{n}) \cong \underset{\underset{m>n}{\ra}}{\lim}D(H_{0},\h_{n}) \cong \underset{\underset{m}{\ra}}{\lim} \underset{{g\in H_{0}/H_{m}}}{\bigoplus} \delta_{g}\star D^{\an}(\h_{m},K). 
\end{equation} This isomorphism shows that $D(H_{0},\h^{\circ}_{n})$ is a coherent ring of compact type.

For each $n\geq 0$, there is a continuous injection \[
    C^{\La}(H_{0},K)_{\h^{\circ}_{n}\An} \ra C^{\La}(H_{0},K)_{\h^{\circ}_{n+1}\An} 
\] which induces an isomorphism \[
    C^{\La}(H_{0},K) \cong \underset{\ra}{\lim}  C^{\La}(H_{0},K)_{\h^{\circ}_{n}\An}.
\] Let $D^{\La}(H_{0},K)$ be the strong dual of $C^{\La}(H_{0},K)$. There is an isomorphism of topological $K$-algebras \[
    D^{\La}(H_{0},K) \cong  \underset{\la}{\lim}  D(H_{0},\h^{\circ}_{n}).
\] It was shown in \cite[\textsection 5]{analytic} that this isomorphism induces a weak Fr\'echet-Stein algebra structure on $D^{\La}(H_{0},K)$, in the sense of \cite[Def. 1.2.6]{analytic}.

There are natural injections of $K[H_0]$ into each of the distribution algebras $D(H_{0},\h^{\circ}_{n})$ for all $n\geq 0$ and $D^{\La}(H_{0},K)$, by mapping $h\in H_{0}$ to the dirac distribution $\delta_{h}$. The image of both of these injections are dense. By Theorem 6.3 of \cite{ST_dist}, there is an anti-equivalence between the category of admissible $G$-representations over $K$ and left coadmissible $D^{\La}(H_{0},K)$-modules. The anti-equivalence is by sending an admissible $G$-representation $V$ to its strong dual $V^{\prime}_{b}$, where $h\in H_{0}$ acts on $V^{\prime}_{b}$ via the contragradient action associated to $h^{-1}$ (this is so that $V^{\prime}_{b}$ is a left $D^{\La}(H_{0},K)$-module). This action of $H_{0}$ on $V^{\prime}_{b}$ is compatible with the natural injection $K[H_{0}]\inj D^{\La}(H_{0},K)$ described earlier; that is, $\delta_{h}\star v^\prime=h^{-1}v^{\prime}$ for all $h\in H_0$ and $v^{\prime} \in V^{\prime}_{b}$. Taking $V=C^{\La}(H_{0},K)$, then the contragradient action of $h\in H_{0}$ on $D^{\La}(H_{0},K)$ (coming from the right regular action on $C^{\La}(H_{0},K)$) is the same as convolution on the left with $\delta_{h}$ (but note that $\delta_{h}\star \delta_{g} = \delta_{gh}$). 

\begin{lemma}\label{lem:Hd_coinv}
The space of $N_{0}$-coinvariants of $D(H_{0},\h^{\circ}_{n})$, denoted by $D(H_{0},\h^{\circ}_{n})_{N_{0}}$, coincides with the Hausdorff $N_{0}$-coinvariants. Additionally, there is an isomorphism of $D(M_{0},\m^{\circ}_{n})$-algebras \[
    D(H_{0},\h^{\circ}_{n})_{N_{0}} \cong D(\overline{P}_{0},\overline{\mathbb{P}}^{\circ}_{n})\cong D(\overline{N}_{0},\overline{\n}^{\circ}_{n})\widehat{\otimes}_{K} D(M_{0},\m^{\circ}_{n}), 
\] where $D(M_{0},\m^{\circ}_{n})$ acts on the target by its usual left module structure on the second factor. 
\end{lemma}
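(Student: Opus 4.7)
The proof rests on the rigid analytic Iwahori decomposition of Proposition \ref{prop:good_grps}, which identifies $\h_n$ with $\overline{\n}_n \times \m_n \times \n_n$ as rigid analytic varieties. This gives the topological isomorphism
\[
C^{\an}(\h_n, K) \cong C^{\an}(\overline{\n}_n, K) \widehat{\otimes}_K C^{\an}(\m_n, K) \widehat{\otimes}_K C^{\an}(\n_n, K),
\]
and using the normality of $H_n$ in $H_0$ combined with the group-level Iwahori $H_0 = \overline{N}_0 M_0 N_0$, summing over $H_0/H_m$-cosets and passing to the projective limit in $m > n$ yields
\[
C^{\La}(H_0, K)_{\h_n^\circ\An} \cong C^{\La}(\overline{P}_0, K)_{\overline{\mathbb{P}}_n^\circ\An} \widehat{\otimes}_K C^{\La}(N_0, K)_{\n_n^\circ\An}.
\]
Strong duality then produces a topological decomposition $D(H_0, \h_n^\circ) \cong D(\overline{P}_0, \overline{\mathbb{P}}_n^\circ) \widehat{\otimes}_K D(N_0, \n_n^\circ)$; the same argument applied to $\overline{P}_0 = \overline{N}_0 M_0$ furnishes the second claimed isomorphism $D(\overline{P}_0, \overline{\mathbb{P}}_n^\circ) \cong D(\overline{N}_0, \overline{\n}_n^\circ) \widehat{\otimes}_K D(M_0, \m_n^\circ)$, with its $D(M_0, \m_n^\circ)$-algebra structure inherited from the subgroup inclusion $M_0 \subset \overline{P}_0$.

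Under this decomposition the $N_0$-action on $D(H_0, \h_n^\circ)$ transports to the trivial action on the first factor and the natural action on $D(N_0, \n_n^\circ)$, so both the first isomorphism of the lemma and the coincidence of algebraic and Hausdorff coinvariants reduce to showing that $D(N_0, \n_n^\circ)_{N_0} \cong K$ via the augmentation, with this quotient already Hausdorff.

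For this key step, fix a $\zp$-basis $X_1, \ldots, X_r$ of $\N_0$, and set $n_j = \exp(X_j)$ and $\alpha_j = \delta_{n_j} - 1 \in D(N_0, \n_n^\circ)$. By the Amice-type expansion \eqref{eqn:dist1}--\eqref{eqn:dist2}, any element of the augmentation ideal may be written as a convergent PBW-ordered series $\sum_{I \neq 0} b_I \alpha^I$; this regroups as $\sum_{j=1}^r \alpha_j f_j$ by factoring $\alpha_j$ out of those monomials $\alpha^I$ whose smallest positive index is $i_j$, and the growth conditions on the $f_j$ are preserved because $|[(|I|-1)/p^{-m}]!|^{-1} \leq |[|I|/p^{-m}]!|^{-1}$. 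Therefore the augmentation ideal coincides with the right ideal $\sum_{j=1}^r \alpha_j \cdot D(N_0, \n_n^\circ) = \sum_{n \in N_0}(\delta_n - 1) D(N_0, \n_n^\circ)$, which as a finitely generated submodule of a coherent module of compact type is automatically closed by \cite[Prop. A.10]{Jacquet2}. Hence $D(N_0, \n_n^\circ)_{N_0} = K$ holds algebraically and on the nose as a Hausdorff quotient, and tensoring back through the decomposition completes the proof. The main difficulty is precisely the explicit regrouping argument establishing algebraic (rather than merely topological) finite generation of the augmentation ideal; once this is in place, the rest is a formal consequence of the Iwahori decomposition and compact-type duality.
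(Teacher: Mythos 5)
Your route differs from the paper's in a structural way: the paper computes the $N_n$-coinvariants of $D^{\an}(\h_n^{\circ},K)$ directly from the Amice expansion (using equations (\ref{eqn:dist1})--(\ref{eqn:dist3}) and the explicit identification $\delta_{\exp(X_j)}-1=\alpha_j$ for the $N$-direction indices), and only afterward lifts to $D(H_0,\h_n^{\circ})_{N_0}$; you instead first assert a completed tensor decomposition $D(H_0,\h_n^{\circ})\cong D(\overline{P}_0,\overline{\P}_n^{\circ})\widehat{\otimes}_K D(N_0,\n_n^{\circ})$ carrying the $N_0$-action to the second factor, and then reduce to $D(N_0,\n_n^{\circ})_{N_0}\cong K$. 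Your explicit regrouping argument is a genuinely useful flesh-out of what the paper compresses into one sentence, and the closedness-via-finitely-generated-ideal argument for Hausdorffness is an attractive alternative to the paper's ``the target is visibly Hausdorff.'' However, there are two gaps. First, the ``tensoring back'' step is glossed over: the algebraic (not topological) $N_0$-coinvariants do not automatically commute with $\widehat{\otimes}_K$, so concluding $D(H_0)_{N_0}\cong D(\overline{P}_0)\widehat{\otimes}_K D(N_0)_{N_0}$ needs an argument; the paper avoids this by working with the Amice basis of the whole algebra, where the ideal $\sum_{j>\ell}\alpha_j\star D^{\an}(\h_n^{\circ},K)$ and the complementary subalgebra are visible simultaneously. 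Second, and more seriously, ``the same argument applied to $\overline{P}_0=\overline{N}_0M_0$'' does not give the third isomorphism as $D(M_0,\m_n^{\circ})$-algebras: $\overline{P}_0$ is a semidirect product $\overline{N}_0\rtimes M_0$, not a direct product, so the algebra structure on $D(\overline{P}_0,\overline{\P}_n^{\circ})$ is a twisted tensor product, and untwisting it to the ordinary $\widehat{\otimes}_K$ is exactly what the paper's citations of \cite[Prop.\ 4.2.22]{Jacquet1} and the untwisting Lemma \cite[Lem.\ 3.6.4]{analytic} are for. You should supply this.

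A smaller point worth flagging: with the convention $\delta_h\star\delta_g=\delta_{gh}$ used in the paper, the Amice basis element $\alpha^I$ is the $\star$-product of the $\alpha_j$'s in \emph{decreasing} index order, so factoring out the $\alpha_j$ of smallest positive index puts it on the \emph{right} of the monomial, producing the left $\star$-ideal $\sum_j D\star\alpha_j$ rather than $\sum_j\alpha_j\star D$. This is only a bookkeeping issue --- the coinvariants ideal is $\sum_n(\delta_n-1)\star D$ and you should factor out the largest index instead, or equivalently verify that your conventions are internally consistent --- but as written the ``right ideal'' claim does not line up with the paper's $\star$-convention.
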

\begin{proof}
By the choice of the good analytic open subgroups $\{H_{n}\}_{n\geq 0}$, there is an isomorphism of rigid analytic groups\[
    \overline{\N}_{n} \oplus\M_{n}\oplus\N_{n}=\H_{n} \overset{\exp}{\cong} \h_{n}. 
\] Let $\{X_{j}\}_{j=1}^{\ell}$, $\{X_{j}\}_{j=\ell+1}^{d}$ be a choice of $\zp$-basis for $\overline{\N}_{n} \oplus\M_{n}$ and $\N_{n}$ respectively. Then their union forms a basis of $\H_{n}$. Define $\psi:\,\zp^{d} \cong H_{n}$ as in equation (\ref{eqn:psi}).


The action of $n\in N_{n}$ on $D^{\an}(\h^{\circ}_{n},K)$ is that of convolution with $\delta_{n}$. Suppose $n=\psi(\overline{t})$ is an element of $N_{n}$, for some $\overline{t}=(0,...,0,\overline{t}_{\ell+1},...,\overline{t}_{d})\in \z_{p}^{d}$. Then  \[
    \delta_{n}\left(\psi^{\star}(f)\right)= f(\bar{t}) 
\] and an easy computation shows that \[
    \delta_{n}\left(\psi^{\star}(t_{j}^{k_{j}})\right)=t_{j}^{k_j}(\bar{t})=\begin{cases}
    \bar{t}^{k_j}_{j} & \textnormal{if } \ell+1\leq j\leq d\\
    0 & \textnormal{else}\\
\end{cases}.
\] Since the dual of $t_{j}^{k_{j}}$ is $\frac{X_{j}^{k_{j}}}{k_{j}!}$, we see that under isomorphism (\ref{eqn:dist1}), $\delta_{n}$ is identified with the power series \[
    \sum_{I}\bar{t}^{k_{\ell+1}}_{\ell+1}...\bar{t}^{k_d}_{d}\frac{X_{\ell+1}^{k_{\ell+1}}}{k_{\ell+1}!}...\frac{X_{d}^{k_{d}}}{k_{d}!}.
\] In particular, for all $\ell+1\leq j \leq d$, the elements $\delta_{\exp(X_{j})}-1$ correspond to $\exp(X_{j})-1=\alpha_{j}$. Equations (\ref{eqn:dist1}) and (\ref{eqn:dist2}) combine to show that $D^{\an}(\h_{n}^{\circ},K)_{N_{n}}$ can be identified with the subalgebra generated by $\alpha_{j}$ for all $j\leq\ell$. This is isomorphic to $D^{\an}(\overline{\mathbb{P}}_{n}^{\circ},K)$. Equation (\ref{eqn:dist3}) implies that \begin{equation*}
        D(H_{0},\h^{\circ}_{n})_{N_{0}} \cong D(\overline{P}_{0},\overline{\mathbb{P}}^{\circ}_{n}) 
\end{equation*} which is Hausdorff. 

The second assertion of the lemma follows from the Iwahori decomposition of $H_{0}$ and $H_{n}$ and the proof of \cite[Prop. 4.2.22]{Jacquet1} using the untwisting Lemma \cite[Lem. 3.6.4]{analytic}. 
\end{proof}

Suppose $H$ is an analytic open subgroup of $G$ obtained by exponentiating a $\zp$-Lie sublattice $\H$ of $\g$, where $\left[\H,\H\right]\sub a\H$ for some $a^{p-1}\in p\zp$. Suppose $L$ is an analytic open subgroup of $H$ of finite index, obtained by exponentiating a $\zp$-Lie sublattice $\mathfrak{l}\sub\H$. By the theory of elementary divisors, there exists a $\zp$-basis $\{X_{j}\}$ of $\H$ and elements $\alpha_{j}\in \zp$, such that $\left\{ \alpha_{j}X_{j}\right\}$  is a basis of $\mathfrak{l}$. 

Let $A^{(m)}$ (resp. $C^{(m)}$) be the $\ok$-subalgebra of $U(\g)$ generated by $\frac{X_{j}^{i}}{i!}$ (resp. $\frac{(\alpha_{j}X_{j})^{i}}{i!}$) for all $0\leq i\leq p^{m}$ and $0\leq j \leq d$. More explicitly,  \begin{equation}\label{eqn:A^(m)}
    \begin{split}
        A^{(m)}& = \left\{ \sum_{I} a_{I} \frac{q(i_{1})!...q(i_{d})!}{i_{1}!...i_{d}!}X_{1}^{i_{1}}...X_{d}^{i_{d}}: a_{I}\in \ok \textnormal{ and } a_{I}=0\textnormal{ for almost all }I\right\} \textnormal{ and}\\
        C^{(m)}& = \left\{ \sum_{I} b_{I} \frac{q(i_{1})!...q(i_{d})!}{i_{1}!...i_{d}!}(\alpha_{1}X_{1})^{i_{1}}...(\alpha_{d}X_{d})^{i_{d}}: b_{I}\in \ok \textnormal{ and } b_{I}=0\textnormal{ for almost all }I\right\},
    \end{split}
\end{equation} where $q(i_j)$ is the integral part of $\frac{i_{j}}{p^m}$. Let $\widehat{A}^{(m)}$ (resp. $\widehat{C}^{(m)}$) denote the $p$-adic completion of $A^{(m)}$ (resp. $C^{(m)}$). Let $D^{\an}(\h^{\circ},K)^{(m)}= K\otimes_{\ok}\widehat{A}^{(m)}$ (resp. $D^{\an}(\L^{\circ},K)^{(m)}= K\otimes_{\ok}\widehat{C}^{(m)}$), which is naturally a $K$-Banach algebra. There is an isomorphism of compact type $K$-algebras \begin{equation}\label{eqn: D^(m)}
    D^{\an}(\h^{\circ},K)\cong \underset{\underset{m}{\ra}}{\lim}D^{\an}(\h^{\circ},K)^{(m)} \textnormal{ (resp. } D^{\an}(\L^{\circ},K)\cong \underset{\underset{m}{\ra}}{\lim}D^{\an}(\L^{\circ},K)^{(m)} \textnormal{)}
\end{equation} whose transition maps are flat and dense \cite[Prop. 5.2.6, Prop. 5.3.11]{analytic}. Let $A_{\leq i}^{(m)}=A^{(m)}\cap U(\g)_{\leq i}$ (resp. $C_{\leq i}^{(m)}=C^{(m)}\cap U(\g)_{\leq i}$). 

\begin{proposition}\label{prop:commutator}
Suppose $\left[\H,\H\right]\sub a\H$ for some $a^{p-1}\in p\zp$ and $m,m^{\prime}\geq 0$, then \[
    \left[C^{(m^{\prime})},A^{(m)}_{\leq i}\right] \sub C^{(m^{\prime})}A^{(m)}_{\leq i-1}
\] for all $i>0$. 
\end{proposition}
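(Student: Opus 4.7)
The plan is to prove the containment by a nested induction: outer on $i \geq 1$, inner on the length of $c$ as a product of the basic generators $\frac{(\alpha_{j}X_{j})^{k}}{k!}$ (with $1 \leq k \leq p^{m'}$, $1 \leq j \leq d$) of $C^{(m')}$. The workhorse is the Leibniz rule $[c_{1}c_{2}, x] = c_{1}[c_{2}, x] + [c_{1}, x]c_{2}$. Throughout I write $x$ for a generic element of $A^{(m)}$, to avoid overloading the constant $a$ of the hypothesis.

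For the inner reduction at index $i$, write $c = c_{1}c_{2}$ with both $c_{1}, c_{2}$ shorter. The term $c_{1}[c_{2}, x]$ lies in $C^{(m')} A^{(m)}_{\leq i-1}$ immediately, by the inner hypothesis applied to $c_{2}$. For $[c_{1}, x]c_{2}$: expand $[c_{1}, x] = \sum_{s} c'_{s} x'_{s}$ with $c'_{s} \in C^{(m')}$ and $x'_{s} \in A^{(m)}_{\leq i-1}$, and rewrite $x'_{s} c_{2} = c_{2} x'_{s} + [x'_{s}, c_{2}]$. The first summand contributes to $C^{(m')}A^{(m)}_{\leq i-1}$ directly; the outer hypothesis at index $i-1$ places $[x'_{s}, c_{2}]$ in $C^{(m')}A^{(m)}_{\leq i-2}$, controlling the second. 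A parallel Leibniz reduction on the $x$-side reduces to the case $c = \frac{(\alpha_{j}X_{j})^{k}}{k!}$ a single generator and $x = X_{s}$ a single element of $\H$. A further induction on $k$, using the identity
\[ [Y^{k}, x] = \sum_{r=0}^{k-1} Y^{r}[Y, x]\, Y^{k-1-r} \quad \textnormal{with } Y = \alpha_{j}X_{j}, \]
recombining $Y^{r}\cdot Y^{k-1-r}$ into the divided power $\frac{Y^{k-1}}{(k-1)!} \in C^{(m')}$ after the factor of $k$ in the sum is absorbed into $\frac{1}{k!}$, and handling commutator corrections via the outer hypothesis, brings us to the base case $k = 1$.

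The remaining content is the single-element base step $[\alpha_{j}X_{j}, X_{s}] = \alpha_{j}[X_{j}, X_{s}]$, which by the hypothesis $[\H, \H] \subseteq a\H$ lies in $\alpha_{j}\cdot a\H$. The main obstacle---the integral heart of the proposition---is verifying that $a\alpha_{j}\H \subseteq C^{(m')}$: writing $[X_{j}, X_{s}] = a\sum_{t} c^{t}_{js} X_{t}$ with $c^{t}_{js} \in \zp$, the requirement becomes the divisibility $a\alpha_{j} \in \alpha_{t}\ok$ for each pair $(j, t)$. This integrality depends delicately on the valuation of $a$ (controlled by $a^{p-1} \in p\zp$) relative to the elementary divisors $\{\alpha_{j}\}$; once established, the inductive scaffolding above assembles the full containment.
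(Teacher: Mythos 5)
Your Leibniz-rule scaffolding is sound, and it correctly reduces the claim to the single-generator base step $[\alpha_{j}X_{j},X_{s}]=\alpha_{j}[X_{j},X_{s}]$, which must be shown to lie in $C^{(m')}$ (since $C^{(m')}A^{(m)}_{\leq 0}=C^{(m')}$). You correctly identify that this requires the divisibility $a\alpha_{j}\in\alpha_{t}\ok$ for all pairs $(j,t)$, but you do not establish it---and in fact it does not follow from the stated hypotheses. The hypothesis $[\H,\H]\sub a\H$ constrains $\H$ alone, while the $\zp$-Lie-sublattice condition on $\mathfrak{l}$ only gives $[\alpha_{j}X_{j},\alpha_{s}X_{s}]\in\mathfrak{l}$, i.e.\ $a\alpha_{j}\alpha_{s}\in\alpha_{t}\ok$, which is weaker than what you need by a factor of $\alpha_{s}$. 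Concretely: take $\g$ the three-dimensional Heisenberg algebra with $[X_{1},X_{2}]=pX_{3}$, $\H$ the standard lattice (so $a=p$), and $\mathfrak{l}$ with elementary divisors $(\alpha_{1},\alpha_{2},\alpha_{3})=(1,p^{10},p^{10})$. Then $\mathfrak{l}$ is a valid $\zp$-Lie sublattice of finite index, but $\alpha_{1}[X_{1},X_{2}]=pX_{3}$, while the degree-one part of $C^{(m')}$ is $\ok\oplus\ok X_{1}\oplus\ok p^{10}X_{2}\oplus\ok p^{10}X_{3}$, which does not contain $pX_{3}$. So the integrality you reduce to is genuinely unavailable.

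This is precisely the crux of generalizing from \cite[Prop.\ 5.2.17]{analytic}: there $\mathfrak{l}=\alpha\H$ for a single scalar, so $\alpha_{j}[X_{j},X_{s}]\in a\alpha\H=a\,\mathfrak{l}\sub\mathfrak{l}$ is immediate (and equivalently $a\alpha_{j}/\alpha_{t}=a\in\ok$). The paper's own proof merely cites Emerton and asserts that the argument carries over when the $\alpha_{j}$'s differ; your unpacking shows that this is not automatic, and that either additional hypotheses relating $a$ to the elementary divisors are needed (for instance $a\H\sub\mathfrak{l}$, or $\mathfrak{l}$ an ideal of $\H$), or a different reduction that avoids putting $\alpha_{j}[X_{j},X_{s}]$ entirely inside $C^{(m')}$. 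As written, your proposal supplies neither, so the ``integral heart'' remains open.
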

\begin{proof}
This is a slight generalization of \cite[Prop. 5.2.17]{analytic}, where the only difference is that all the $\alpha_{j}$'s are all equal. The same proof applies to the case where the $\alpha_{j}$'s may differ from one another.   
\end{proof}

\begin{theorem}\label{thm:flat}
Let $\h$ and $\L$ be rigid analytic groups satisfying all the hypothesis described above. If $m$ is a sufficiently large integer, then the natural map \[
    D^{\an}\left(\L^{\circ},K\right)\ra D^{\an}\left(\h^{\circ},K\right)\]
induced by the rigid analytic inclusion $\L\sub \h$ factors through the natural map \[
    D^{\an}\left(\h^{\circ},K\right)^{(m)}\ra D^{\an}\left(\h^{\circ},K\right)
\] and the resulting map \[
    D^{\an}\left(\L^{\circ},K\right)\ra D^{\an}\left(\h^{\circ},K\right)^{(m)}
\] is flat.
\end{theorem}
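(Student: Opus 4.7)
The plan is to treat the two claims of the theorem separately: first establish the factoring, then prove flatness of the resulting map.

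For the factoring, by equation (\ref{eqn: D^(m)}) applied to $\L$, write $D^{\an}(\L^{\circ},K) = \underset{\underset{m'}{\ra}}{\lim}\, D^{\an}(\L^{\circ},K)^{(m')}$, and for a fixed $m$ construct compatible continuous maps $D^{\an}(\L^{\circ},K)^{(m')} \to D^{\an}(\h^{\circ},K)^{(m)}$ for all $m'$. At the ring level, the generators of $C^{(m')}$ are $\frac{(\alpha_j X_j)^i}{i!} = \alpha_j^i \frac{X_j^i}{i!}$, and since $\alpha_j = p^{e_j}\in\zp$ by elementary divisors, one can expand these in the $\ok$-basis of $A^{(m)}$ given in (\ref{eqn:A^(m)}). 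A direct $p$-adic estimate, using the explicit form of the basis elements $\frac{q(i_1)!\cdots q(i_d)!}{i_1!\cdots i_d!}X^{I}$, shows that the resulting norms in $\widehat{A}^{(m)}\otimes_{\ok}K$ are bounded, provided $m$ is taken sufficiently large in comparison with the $e_j$'s. Compatibility of these maps over $m'$ then induces the factoring $D^{\an}(\L^{\circ},K) \to D^{\an}(\h^{\circ},K)^{(m)}$.

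For the flatness, the main tool is Proposition \ref{prop:commutator}. I would filter $A^{(m)}$ and $C^{(m')}$ by PBW degree; by Proposition \ref{prop:commutator}, the associated graded inclusion $\textnormal{gr}\,C^{(m')} \hookrightarrow \textnormal{gr}\,A^{(m)}$ is a map of commutative $\ok$-algebras sitting inside $S(\g)$. The plan is then to exhibit $\textnormal{gr}\,A^{(m)}$ as a free $\textnormal{gr}\,C^{(m')}$-module with basis given by coset representatives for $\H/\mathfrak{l}$, a finite abelian group of order $\prod_j p^{e_j}$. This is the divided-power analogue, at the level of $A^{(m)}$, of the PBW-style decomposition relating $U(\mathfrak{l})$ and $U(\g)$. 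Once flatness is established at the graded level, lift it via a standard filtered-to-graded spectral sequence to obtain flatness of $A^{(m)}$ over $C^{(m')}$; pass to $p$-adic completions and invert $p$ to get flatness of $D^{\an}(\h^{\circ},K)^{(m)}$ over $D^{\an}(\L^{\circ},K)^{(m')}$. Since direct limits of flat modules are flat, taking the direct limit over $m'$ yields the desired flatness of $D^{\an}(\h^{\circ},K)^{(m)}$ over $D^{\an}(\L^{\circ},K)$.

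The main obstacle is the step passing from graded commutative flatness to flatness of the $p$-adically completed, non-commutative Banach algebras. In the Noetherian commutative setting this would be routine, but here $\widehat{A}^{(m)}\otimes_{\ok}K$ and $\widehat{C}^{(m')}\otimes_{\ok}K$ are neither commutative nor Noetherian in general. The hypothesis that $m$ is sufficiently large should enter precisely at this step, ensuring that the commutator estimate of Proposition \ref{prop:commutator} combines with the $p$-adic topology to produce a strict filtration on $\widehat{A}^{(m)}$ by $\widehat{C}^{(m')}$-subbimodules lifting the graded decomposition, so that the flatness on the associated graded does not degenerate under completion.
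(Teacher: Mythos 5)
Your factoring argument is essentially the paper's: both establish $C^{(m')}\subseteq A^{(m)}$ for $m$ large by a direct $p$-adic valuation estimate, and pass to completions. The paper makes the bound fully explicit via the sum-of-digits function, showing $\ord_K(\alpha_j^i\,q'!)\geq\ord_K(q!)$ once $\ord_K(\alpha_j)\geq\frac{1}{p^m(p-1)}$, but this is the same idea.

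The flatness argument, however, has a genuine gap. You propose to filter $A^{(m)}$ and $C^{(m')}$ by PBW degree and show $\textnormal{gr}\,A^{(m)}$ is free over $\textnormal{gr}\,C^{(m')}$ with basis indexed by coset representatives for the finite group $\H/\mathfrak{l}$. This is false. Already in the simplest commutative case $d=1$, $m=m'=0$, $\alpha_1 = p^e$, you would be claiming $\ok[X]$ is free of rank $p^e$ over $\ok[p^e X]$; but $X^{p^e}$ is not in the $\ok[p^e X]$-span of $1,X,\dots,X^{p^e-1}$ (any such combination has bounded denominators, while $X^{p^e} = p^{-ep^e}(p^eX)^{p^e}$ does not), so $A^{(m)}$ is not a finitely generated $C^{(m')}$-module. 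The group-ring analogy $\ok[G]\cong\ok[H]^{[G:H]}$ does not transfer to these divided-power algebras, and the PBW picture is uninformative here because $\mathfrak{l}$ and $\H$ span the same Lie algebra $\g$, so $U(\mathfrak{l})=U(\H)=U(\g)$ and the PBW bases coincide — the entire content is in the integral lattice structure, not the Lie-algebraic one.

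The paper instead takes $B=C^{(m')}A^{(m)}$ (equal to $A^{(m)}$ once the factoring inclusion is known) and equips it with the filtration $F_iB = C^{(m')}A^{(m)}_{\leq i}$, i.e.\ the $C^{(m')}$-saturation of the PBW filtration of $A^{(m)}$, \emph{not} the PBW filtration itself. Proposition \ref{prop:commutator} is used precisely to show this is a multiplicative ring filtration with $F_0B=C^{(m')}$ and commutative associated graded over $C^{(m')}$; this puts one in the exact hypotheses of \cite[Lem.~5.3.9]{analytic}, and \cite[Prop.~5.3.10]{analytic} then yields flatness of $\widehat{C}^{(m')}\to\widehat{B}$ after $p$-adic completion and inverting $p$. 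Those two cited results are not optional scaffolding — they are the mechanism that makes the non-Noetherian, non-commutative completion step go through, which you correctly identify as the main obstacle but do not actually resolve. Note also that the hypothesis ``$m$ sufficiently large'' is used only in the factoring step (to force $C^{(m')}\subseteq A^{(m)}$), not in the completion step as you suggest.
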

\begin{proof}
The proof is similar to that of \cite[Prop. 5.3.13]{analytic}. We would like point out that there is a small error in the proof of \cite[Prop. 5.3.13]{analytic} (the equation after (5.3.15)), which can be fixed in a similar way to the proof given here. 

For each natural number $i$, let $s(i)$ be the sum of digits in the $p$-adic expansion of $i$. Let $i$ be a fixed integer, and write $i=p^{m}q+r$ (resp. $i=p^{m^\prime}q^\prime +r^\prime$) for some $q^\prime$ and $0\leq r<p^{m}$ (resp. $q^\prime$ and $0\leq r^\prime < p^{m^\prime})$. Then \[
    \ord_{K}(q!)=\ord_{K}\left(\frac{i-r}{p^{m}}!\right)= \frac{i-r}{p^{m}(p-1)}-\frac{s(i-r)}{p-1}.
\]
Choose $m$ large enough so that \[
    \ord_{K}(\alpha_{j})\geq\frac{1}{p^{m}(p-1)}
\] for all $j$. The inequality $\ord_{K}(\alpha_{j})i\geq\ord_{K}(q!)$ implies that \[
    \ord_{K}\left(\alpha_{j}^{i}q^{\prime}!\right)=\ord_{K}(\alpha_{j})i+\ord_{K}(q^{\prime}!)\geq\ord_{K}(q!)
\]
independently of $m^{\prime}$. Taking into account of equation (\ref{eqn:A^(m)}), this shows that $C^{(m^{\prime})}\sub A^{(m)}$ for every $m^{\prime}$. After taking $p$-adic completion and tensoring with $K$, we see that there is a factorization \[
    D(\L^{\circ},K)^{(m^{\prime})}\ra D^{\an}\left(\h^{\circ},K\right)^{(m)}\ra D^{\an}\left(\h^{\circ},K\right)
\] for every $m^{\prime}$. Therefore, there is a factorization \[
    D(\L^{\circ},K)\ra D^{\an}\left(\h^{\circ},K\right)^{(m)}\ra D^{\an}\left(\h^{\circ},K\right).
\]
Let $B=C^{(m^{\prime})}A^{(m)}$, which by Proposition \ref{prop:commutator} is an $\ok$-subalgebra of $U(\g)$. In fact, since $C^{(m^{\prime})}\sub A^{(m)}$, $B=A^{(m)}$. Equip $B$ with the filtration $F_{i}B=C^{(m^{\prime})}A_{\leq i}^{(m)}$. By Proposition \ref{prop:commutator} again, $B$ satisfies the all the assumptions of \cite[Lem. 5.3.9]{analytic} (by taking $A=C^{(m^{\prime})}$). Proposition 5.3.10 of \cite{analytic} shows that \[
    D(\L^{\circ},K)^{(m^{\prime})}=K\otimes_{\ok}\widehat{C}^{(m^{\prime})}\ra K\otimes_{\ok}\widehat{B}\cong K\otimes_{\ok}\widehat{A}^{(m)}= D^{an}\left(\h^{\circ},K\right)^{(m)}
\] is flat as required.
\end{proof}

Suppose additionally that $\{H_{n}\}_{n\geq 0}$ is sequences of good analytic open subgroups of $G$ satisfying proposition \ref{prop:good_grps}, where $H=H_{0}$. For each $n\geq 0$, let $L_{n}=H_{n}\cap L$. The inclusions $L_{0}\sub H_{0}$ and $L_{n}\sub H_{n}$ induce a continuous homomorphism of Fr\'{e}chet $K$-algebras \[
    D^{\La}(L_{0},K) \ra D^{\La}(H_{0},K),
\] as well as a continuous homomorphism of compact type $K$-algebras \[
    D(L_{0},\L_{n}^{\circ}) \ra D(H_{0},\h_{n}^{\circ}) .
\] Furthermore, there is an isomorphism of topological $K$-algebras \begin{equation*}
    D^{\La}(H_{0},K) \cong \underset{x\in H_{0}/L_{0}}{\bigoplus} \delta_{x}\star D^{\La}(L_{0},K).
\end{equation*} For each $n\geq 0$, define \begin{align*}
    D(H_{0},\L_{n}^{\circ}) & := D(L_{0},\L_{n}^{\circ}) \underset{D^{\La}(L_{0},K)}{\otimes} D^{\La}(H_{0},K) \\
    & \cong \underset{x\in H_{0}/L_{0}}{\bigoplus} \delta_{x}\star D(L_{0},\L_{n}^{\circ}).
\end{align*} 

For all $n$ sufficiently large, there are also inclusions $H_{n}\sub L_{0}$. Since $H_{n}$ is a normal subgroup of the bigger group $H_{0}$, $H_{n}$ is also a normal subgroup of $L_{0}$. Define the compact type $K$-algebra $D(L_{0},\h_{n}^{\circ})$  to be the strong dual of $C^{\La}(L_{0},K)_{\h_{n}^{\circ}\An}$. Similar to equation (\ref{eqn:dist3}), there is an isomorphic of $K$-algebras \[
    D(L_{0},\h_{n}^{\circ}) \cong \underset{\underset{m>n}{\ra}}{\lim} \underset{{g\in L_{0}/H_{m}}}{\bigoplus} \delta_{g}\star D^{\an}(\h_{m},K). 
\]

\begin{corollary}\label{cor:flat}
Let $n$ be an integer large enough so that $H_{n}\sub L_{0}$. Then the continuous $K$-linear morphism between compact type $K$-algebras \begin{equation*}
    D(L_{0},\L_{n}^{\circ}) \ra D(L_{0},\h_{n}^{\circ}) 
\end{equation*}
is flat.
\end{corollary}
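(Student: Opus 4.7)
The plan is to reduce the corollary to the Lie-algebra-level flatness provided by Theorem \ref{thm:flat} via a base-change argument. Concretely, the key identification is
\begin{equation*}
    D(L_{0},\mathfrak{l}_{n}^{\circ}) \otimes_{D^{\an}(\mathfrak{l}_{n}^{\circ},K)} D^{\an}(\mathfrak{h}_{n}^{\circ},K) \isom D(L_{0},\mathfrak{h}_{n}^{\circ}),
\end{equation*}
which realizes the target as the base change of the source along the Lie-algebra-level map $D^{\an}(\mathfrak{l}_{n}^{\circ},K) \ra D^{\an}(\mathfrak{h}_{n}^{\circ},K)$, with $D^{\an}(\mathfrak{l}_{n}^{\circ},K)$ viewed as the identity-coset subalgebra of $D(L_{0},\mathfrak{l}_{n}^{\circ})$. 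Once this identity is in hand, flatness of the corollary's map follows from flatness of the Lie-algebra-level map, which is preserved under base change.

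To establish the base-change identity, I would use the explicit direct-limit descriptions
\begin{align*}
    D(L_{0},\mathfrak{l}_{n}^{\circ}) &\isom \underset{\underset{m>n}{\ra}}{\lim}\; \underset{g\in L_{0}/L_{m}}{\bigoplus} \delta_{g}\star D^{\an}(\mathfrak{l}_{m},K),\\
    D(L_{0},\mathfrak{h}_{n}^{\circ}) &\isom \underset{\underset{m>n}{\ra}}{\lim}\; \underset{g\in L_{0}/H_{m}}{\bigoplus} \delta_{g}\star D^{\an}(\mathfrak{h}_{m},K),
\end{align*}
together with the observation that the hypothesis $H_{n}\sub L_{0}\sub L$ forces $L_{m}=L\cap H_{m}=H_{m}$ for every $m\geq n$, so the two indexing sets of cosets coincide. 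Each twisted summand $\delta_{g}\star D^{\an}(\mathfrak{l}_{m},K)$ is a free right $D^{\an}(\mathfrak{l}_{m},K)$-module of rank one, so at each finite level the tensor product factors coset-by-coset into $\delta_{g}\star D^{\an}(\mathfrak{h}_{m},K)$. Passing to the direct limit in $m$, and using that tensor products commute with filtered colimits and direct sums, yields the desired identity.

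For the Lie-algebra-level flatness, Theorem \ref{thm:flat} applied to the rescaled pair $(\mathfrak{l}_{n},\mathfrak{h}_{n})$ provides, for all sufficiently large $k$, a flat factorization $D^{\an}(\mathfrak{l}_{n}^{\circ},K) \ra D^{\an}(\mathfrak{h}_{n}^{\circ},K)^{(k)} \ra D^{\an}(\mathfrak{h}_{n}^{\circ},K)$ of the natural map. Since $D^{\an}(\mathfrak{h}_{n}^{\circ},K) \isom \underset{\underset{k}{\ra}}{\lim} D^{\an}(\mathfrak{h}_{n}^{\circ},K)^{(k)}$ with flat transition maps by equation (\ref{eqn: D^(m)}), the map $D^{\an}(\mathfrak{l}_{n}^{\circ},K) \ra D^{\an}(\mathfrak{h}_{n}^{\circ},K)$ is a filtered colimit of flat maps, hence flat. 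The main obstacle is the base-change identity itself: since the distribution algebras are non-commutative, one must verify that $D^{\an}(\mathfrak{l}_{n}^{\circ},K)$ embeds as a subring of $D(L_{0},\mathfrak{l}_{n}^{\circ})$ compatibly with the right-module structures used in the tensor product, but this is ensured by identifying it with the identity-coset summand of the convolution algebra.
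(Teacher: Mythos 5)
The base-change identity you propose,
\begin{equation*}
D(L_0, \L_n^\circ) \otimes_{D^\an(\L_n^\circ, K)} D^\an(\h_n^\circ, K) \cong D(L_0, \h_n^\circ),
\end{equation*}
does not hold, and the coset identification you use to argue for it is where the error enters. Both sides are free right $D^\an(\h_n^\circ, K)$-modules, of respective ranks $[L_0 : \L_n^\circ(\qp)]$ and $[L_0 : \h_n^\circ(\qp)]$; since $\L_n^\circ(\qp)$ is a proper subgroup of $\h_n^\circ(\qp)$ whenever $\mathfrak{l}\subsetneq\H$, these ranks disagree and the two sides cannot be isomorphic. Your step ``$L_m = L\cap H_m = H_m$ for $m\geq n$, so the indexing sets coincide'' would, if taken literally, force $\L_m = \h_m$ as rigid analytic groups for all $m>n$ and hence $\L_n^\circ = \h_n^\circ$, in which case the corollary is a statement about the identity map. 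The intended content of the corollary is that $\L_m$ and $\h_m$ are the distinct rigid balls built from the lattices $p^m\mathfrak{l}$ and $p^m\H$; then $\L_m(\qp)$ is a strictly smaller open subgroup than $H_m$, the two coset decompositions of $L_0$ you wrote down do not match, and the claimed coset-by-coset factorization of the tensor product fails.

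The paper's proof base-changes along a different ring, and this choice is essential. Dualizing $C^\La(H_n^\circ, K)_{\h_n^\circ\An} \ra C^\La(H_n^\circ, K)_{\L_n^\circ\An}$ produces $D(H_n^\circ, \L_n^\circ) \ra D^\an(\h_n^\circ, K)$; Theorem \ref{thm:flat}, combined with the fact that $D(H_n^\circ,\L_n^\circ)$ is finitely generated over $D^\an(\L_n^\circ, K)$, shows this factors through some $D^\an(\h_n^\circ, K)^{(m)}$, the argument of Proposition 5.3.18 of \cite{analytic} shows the factored map is flat, and composing with the flat transition $D^\an(\h_n^\circ,K)^{(m)} \ra D^\an(\h_n^\circ,K)$ gives flatness of $D(H_n^\circ,\L_n^\circ)\ra D^\an(\h_n^\circ, K)$. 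Tensoring by $D^\La(L_0,K)$ over $D^\La(H_n^\circ,K)$ then gives the corollary, and here the base-change identities genuinely hold because $D^\La(L_0, K)$ is free over $D^\La(H_n^\circ, K)$ on the cosets $L_0/\h_n^\circ(\qp)$, which is exactly the coset set that rebuilds both $D(L_0, \L_n^\circ)$ from $D(H_n^\circ,\L_n^\circ)$ and $D(L_0, \h_n^\circ)$ from $D^\an(\h_n^\circ, K)$. Note also that your reduction invokes only the Lie-algebra-level flatness of $D^\an(\L_n^\circ,K)\ra D^\an(\h_n^\circ,K)$, whereas the paper must establish the stronger fact that the full finite free $D^\an(\L_n^\circ,K)$-module $D(H_n^\circ,\L_n^\circ)$ maps flatly to $D^\an(\h_n^\circ,K)^{(m)}$; that step does not follow formally from the Lie-algebra statement and is exactly what the appeal to Proposition 5.3.18 of \cite{analytic} supplies.
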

\begin{proof} By dualizing the natural map $C^{\La}(H_{n}^{\circ},K)_{\h_{n}^{\circ}\An}\ra C^{\La}(H_{n}^{\circ},K)_{\L_{n}^{\circ}\An}$, we obtain a map \begin{equation}\label{eqn:D(H_n,L_n)}
    D(H_{n}^{\circ},\L_{n}^{\circ}) \ra D^{\an}(\h_{n}^{\circ},K). 
\end{equation} Since $D(H_{n}^{\circ},\L_{n}^{\circ})$ is a free $D^{\an}(\L_{n}^{\circ},K)$-module of finite rank, by Theorem \ref{thm:flat}, the above morphism factors through some $D^{\an}(\h_{n}^{\circ},K)^{(m)}$. 

By the same argument as \cite[Prop. 5.3.18]{analytic}, the induced map $D(H_{n}^{\circ},\L_{n}^{\circ})\ra D^{\an}(\h_{n}^{\circ},K)^{(m)}$ is flat. By \cite[Rem. 3.2]{ST_dist} applied to (\ref{eqn: D^(m)}), the map $D^{\an}(\h_{n}^{\circ},K)^{(m)} \ra  D^{\an}(\h_{n}^{\circ},K)$ is also flat. Therefore, equation ($\ref{eqn:D(H_n,L_n)}$) is flat. The corollary then follows from tensoring equation ($\ref{eqn:D(H_n,L_n)}$) by $D^{\La}(L_{0},K)$ over $D^{\La}(H_{n}^{\circ},K)$. 
\end{proof}


\vspace{3mm}
\section{Derived Jacquet-Emerton Module}\label{sec:derived}

\vspace{3mm}

\subsection{Jacquet-Emerton Module Functor}

In this subsection, we recall the construction of the Jacquet-Emerton functor found in \cite{Jacquet1} and prove a theorem involving the exactness of the functor $I_{\overline{P}}^{G}$ introduced in \cite{Jacquet2}. 


Keep all the notation established in Section \ref{sec:intro} and \ref{sec:dist}. Following the notations established in \cite{Jacquet1}, let $\rep_{\textnormal{top.c}}(G)$ denote the category of topological $G$-representations on Hausdorff locally convex $K$-vector spaces of compact type. The morphisms are continuous $G$-equivariant $K$-linear maps. Let $\rep_{\textnormal{la.c}}(G)$ be the full subcategorry of $\rep_{\textnormal{top.c}}(G)$ consisting of locally analytic representations of $G$. Let $\rep_{\textnormal{es}}$ (resp. $\rep_{\textnormal{ad}}$) be the full subcategory of $\rep_{\textnormal{la.c}}(G)$ consisting of essentially admissible (resp. admissible) locally analytic $G$-representations over $K$. Let $\rep_{\textnormal{la.c}}^{z}(G)$ be the full subcategory of $\rep_{\textnormal{la.c}}(G)$ consisting of objects that satisfy the equivalent conditions of \cite[Prop. 6.4.7]{analytic}.


Let $\widehat{Z}_{M}$ be the rigid analytic space of locally $\qp$-analytic characters on $Z_{M}$. Suppose $V$ is an object of $\rep_{\textnormal{top.c}}(Z_{M}^{+})$. Define the finite slope part of $V$ to be \[
    V_{\fs} = \l_{b,Z_{M}^{+}}(C^{\an}(\widehat{Z}_{M},K),V).
\] According to \cite[Lem. 3.2.3]{Jacquet1}, there is a natural isomorphism \begin{equation}\label{eqn:fs_dual}
    (V_{\fs})^{\prime}_{b}\cong C^{\an}(\widehat{Z}_{M},K)\underset{K[Z_{M}^{+}]}{\widehat{\otimes}} V^{\prime}_{b}.
\end{equation}



Let $\delta_{P}: P\ra K^{\times}$ be the modulus character of the parabolic subgroup $P$. This is a smooth character that factors through the Levi quotient $M$. Explicitly, \[
    \delta_{P}(m)=\frac{1}{\left[N_{0}:mN_{0}m^{-1}\right]}
\] for all $m\in M$ and any compact open subgroup $N_{0}$ of $N$. 
Suppose $V$ is a locally $\qp$-analytic representation of $P$ on a locally convex $K$-vector space. For each $z\in Z_{M}^{+}$, define the Hecke operator $\pi_{N_{0},z}:\,V^{N_{0}} \ra V^{N_{0}}$ to be the endomorphism \begin{equation}\label{eqn:original_Hecke}
      \pi_{N_{0},z}(v)=\delta_{P}(z) \underset{x\in N_{0}/zN_{0}z^{-1}}{\sum} xzv = \frac{1}{\left[N_{0}:zN_{0}z^{-1}\right]} \underset{x\in N_{0}/zN_{0}z^{-1}}{\sum} xzv.
\end{equation}
\begin{definition}\label{def:Jacquet}
The Jacquet-Emerton module functor $J_{P}:\,\rep_{\textnormal{es}}(G)\ra \rep_{\textnormal{es}}(M)$ is defined to be \[
    V \map (V^{N_{0}})_{\textnormal{fs}} = \l_{b,Z_{M}^{+}}(C^{\an}(\widehat{Z}_{M},K),V^{N_{0}}).
\]
\end{definition}

We end this subsection with a theorem that will be useful for a computation in section \ref{sec:applications}. Let $U$ be an object of $\rep_{\textnormal{la,c}}^{z}(M)$. Following \cite{Jacquet2}, we define $I_{\overline{P}}^{G}(U)$ to be the closed $G$-subrepresentation of $\ind_{\overline{P}}^{G}(U)$ generated by the image of $U(\delta_{P})$ under the canonical lift $J_{P}(\ind_{\overline{P}}^{G}U)\ra \ind_{\overline{P}}^{G}U$ defined in \cite[(3.4.8)]{Jacquet1}. For more properties of $I_{\overline{P}}^{G}(U)$, see \cite[\textsection 2]{Jacquet2}. 

\begin{theorem}\label{thm:exact_IpG}
Suppose $0 \ra U \ra V \ra W \ra 0$ is a strict exact sequence in $\rep_{\textnormal{la,c}}^{z}(M)$. Suppose $U(\g)\otimes_{U(\p)}U$ is irreducible. Then \begin{equation*}
    0 \ra I_{\overline{P}}^{G}(U) \ra I_{\overline{P}}^{G}(V) \ra I_{\overline{P}}^{G}(W) \ra 0
\end{equation*} is a strict exact sequence in $\rep_{\textnormal{la,c}}^{z}(G)$. 
\end{theorem}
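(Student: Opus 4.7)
The plan is to apply the locally analytic induction functor to the given sequence and to exploit the irreducibility of $U(\g) \otimes_{U(\p)} U$ to identify $I_{\overline{P}}^{G}(U)$ with the full induction $\ind_{\overline{P}}^{G}(U)$. Once that identification is established, the strict exact sequence for the $I_{\overline{P}}^{G}$'s follows from a short diagram chase combined with the right-exactness of $I_{\overline{P}}^{G}$ as a left-adjoint functor (via the adjunction of \cite[Lem. 0.3]{Jacquet2}).

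Concretely, I would first apply $\ind_{\overline{P}}^{G}$, which is exact on strict exact sequences in $\rep_{\textnormal{la,c}}^{z}$ (locally analytic functions lift through strict surjections of the coefficient space, since this can be checked locally in $\overline{P}\backslash G$), to obtain
\[
0 \ra \ind_{\overline{P}}^{G}(U) \ra \ind_{\overline{P}}^{G}(V) \ra \ind_{\overline{P}}^{G}(W) \ra 0.
\]
By functoriality of the canonical lift $J_{P}(\ind_{\overline{P}}^{G}X) \ra \ind_{\overline{P}}^{G}X$, this restricts to a commutative diagram in which the $I_{\overline{P}}^{G}$'s sit inside as closed subrepresentations. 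The key step is then to show $I_{\overline{P}}^{G}(U) = \ind_{\overline{P}}^{G}(U)$: the image of $U(\delta_{P})$ in $\ind_{\overline{P}}^{G}(U)$ under the canonical lift generates, via the infinitesimal action of $\g$, a $\g$-submodule isomorphic to the generalized Verma module $U(\g) \otimes_{U(\p)} U$ (this is essentially the Orlik--Strauch construction). When this Verma is irreducible, its topological closure inside $\ind_{\overline{P}}^{G}(U)$ is $G$-invariant and admits no proper nonzero closed $G$-subrepresentation compatible with the $\g$-action, forcing the closed $G$-subrepresentation $I_{\overline{P}}^{G}(U)$ it generates to coincide with $\ind_{\overline{P}}^{G}(U)$.

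With this identification in hand, injectivity of $I_{\overline{P}}^{G}(U) \ra I_{\overline{P}}^{G}(V)$ is immediate from the injection $\ind(U) \inj \ind(V)$. For middle exactness: if $v \in I_{\overline{P}}^{G}(V)$ vanishes in $I_{\overline{P}}^{G}(W) \sub \ind(W)$, then $v$ lies in $\ker(\ind(V) \ra \ind(W)) = \ind(U) = I_{\overline{P}}^{G}(U)$. Surjectivity follows from the right-exactness of $I_{\overline{P}}^{G}$ as a left-adjoint functor; alternatively, one can lift generators of $I_{\overline{P}}^{G}(W)$ to $I_{\overline{P}}^{G}(V)$ via the strict surjection $V(\delta_{P}) \surj W(\delta_{P})$ coming from strictness of $V \surj W$, and then invoke the open mapping theorem for compact type spaces to upgrade density to surjectivity and strictness of the map. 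The main obstacle is the identification $I_{\overline{P}}^{G}(U) = \ind_{\overline{P}}^{G}(U)$: irreducibility of the Verma module is a purely algebraic statement, whereas $I_{\overline{P}}^{G}(U)$ is defined as a closed $G$-subrepresentation in a topological $G$-representation, so bridging the two requires a careful density/closure argument drawn from the Orlik--Strauch framework.
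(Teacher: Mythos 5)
Your strategy is close in spirit to the paper's, but the packaging is different in a way worth discussing, and there are two concrete gaps.

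The paper does not explicitly prove $I_{\overline{P}}^{G}(U)=\ind_{\overline{P}}^{G}(U)$; instead it shows that the kernel $X:=I_{\overline{P}}^{G}(V)\cap\ind_{\overline{P}}^{G}U$ coincides with $I_{\overline{P}}^{G}(U)$ by comparing stalks at $e$. Both $X$ and $I_{\overline{P}}^{G}(U)$ are local closed subrepresentations, and both are polynomially generated in the sense of \cite[Def. 2.7.15]{Jacquet2}, so it suffices to match their polynomial stalks. This is where irreducibility enters: $I_{\overline{P}}^{G}(U)_{e}^{\pol}$ is the image of $U(\g)\otimes_{U(\p)}U\ra C^{\pol}(N,U)$, and since the Verma is irreducible the kernel of this map is trivial; combined with the abstract isomorphism of \cite[Prop. 3.26]{Bergall_Chojecki}, the image is all of $C^{\pol}(N,U)$. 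Your reformulation $I_{\overline{P}}^{G}(U)=\ind_{\overline{P}}^{G}(U)$ is equivalent to this (and is essentially implicit in the paper), so you have located the right key fact; but the paper's path through the intersection $X$ avoids needing to verify that $\ind_{\overline{P}}^{G}(U)$ itself is polynomially generated, working only with $X$ and $I_{\overline{P}}^{G}(U)$, which are local closed subrepresentations by construction.

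Two concrete issues with your write-up. First, your reasoning for $I_{\overline{P}}^{G}(U)=\ind_{\overline{P}}^{G}(U)$ — ``its topological closure ... admits no proper nonzero closed $G$-subrepresentation compatible with the $\g$-action'' — does not follow from algebraic irreducibility of the Verma, and as you acknowledge, this is exactly where a real argument is needed. The correct bridge is not a density/closure argument but the machinery of local closed subrepresentations and polynomially generated subspaces of \cite[\textsection 2]{Jacquet2}, together with the computation that irreducibility forces the map $U(\g)\otimes_{U(\p)}U\ra C^{\pol}(N,U)$ to be injective (hence, using the abstract isomorphism of Bergdall--Chojecki, surjective). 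Second, your primary justification of surjectivity — ``right-exactness of $I_{\overline{P}}^{G}$ as a left-adjoint functor'' — does not apply, because the adjunction $\l_{G}(I_{\overline{P}}^{G}(U),V)\cong\l_{M}(U(\delta_{P}),J_{P}(V))^{\bal}$ only realizes $I_{\overline{P}}^{G}$ as a left adjoint relative to the subspace of \emph{balanced} maps; this is not a genuine adjunction in the categorical sense and does not give right-exactness for free. Your fallback argument (lift the image of $W(\delta_P)$ through the strict surjection $V(\delta_P)\surj W(\delta_P)$, use that $I_{\overline{P}}^{G}(W)$ is the closed subrepresentation generated by that image, and that the quotient map $\ind_{\overline{P}}^{G}V\ra\ind_{\overline{P}}^{G}W$ is strict) is the correct one and is essentially what the paper does.
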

\begin{proof}
By Corollary 4.14 of \cite{Schraen}, the following is a strict short exact sequence of locally analytic $G$-representations \begin{equation}\label{eqn:ind_exact}
    0 \ra \ind_{\overline{P}}^{G} U \ra \ind_{\overline{P}}^{G}V \ra \ind_{\overline{P}}^{G} W \ra 0.
\end{equation}
Since $\ind_{\overline{P}}^{G} U \ra \ind_{\overline{P}}^{G}V$ is a closed embedding, it follows immediately that $I_{\overline{P}}^{G}(U) \ra I_{\overline{P}}^{G}(V)$ is also a closed embedding. On the other hand, there is a commutative diagram \begin{equation}
\begin{tikzcd}
    V(\delta_{P}) \arrow[r,two heads] \arrow[d] & W(\delta_{P}) \arrow[d] \\
	\ind_{\overline{P}}^{G}V \arrow[r, two heads] & \ind_{\overline{P}}^{G}W
\end{tikzcd}.
\end{equation}
The image of the left vertical arrow surjects onto the image of the second vertical arrow. Since the bottom horizontal arrow is $G$-equivariant and strict, we have just shown that there is a $G$-equivariant strict surjection $I_{\overline{P}}^{G}(V) \ra I_{\overline{P}}^{G}(W)$. It now suffices to check exactness in the middle. 

The kernel of $I_{\overline{P}}^{G}(V) \ra I_{\overline{P}}^{G}(W)$ is exactly the intersection \begin{equation}
    X := I_{\overline{P}}^{G}(V) \cap \ind_{\overline{P}}^{G} U
\end{equation} inside $\ind_{\overline{P}}^{G} V$. It suffices to show that this is exactly $I_{\overline{P}}^{G}(U)$. The space $X$ is a local closed subrepresentation of $\ind_{\overline{P}}^{G} V$ in the sense of \cite[Def. 2.4.1]{Jacquet2}. Let $e$ be the image of the identity of $G$ in $\overline{P}\backslash G$. The remark following \cite[Prop. 2.4.9]{Jacquet2} implies that there is an one to one correspondence between local closed $G$-invariant subspaces of $\ind_{\overline{P}}^{G} V$ and the closed $(\g,\overline{P})$-invariant subspaces of $(\ind_{\overline{P}}^{G} V)_{e}$, by mapping a local closed subspace to its stalk at $e$, as defined by \cite[Def. 2.4.2]{Jacquet2}. Our goal is to show that there is an equality on the stalks $X_{e}=I_{\overline{P}}^{G}(U)_{e}$. Since both of these spaces are polynomially generated, in the sense of \cite[Def. 2.7.15]{Jacquet2}, it also suffices to show that $X_{e}^{\pol}=I_{\overline{P}}^{G}(U)_{e}^{\pol}$. 

From the proof of Proposition $2.8.10$ in \cite{Jacquet2}, the space $I_{\overline{P}}^{G}(U)_{e}^{\pol} $ is exactly the image of $U(\g)\otimes_{U(\p)}U\ra C^{\pol}(N,U)$. By \cite[Prop. 3.26]{Bergall_Chojecki}, the spaces $U(\g)\otimes_{U(\p)}U$ and $C^{\pol}(N,U)$ are in fact isomorphic. Meanwhile, \begin{equation}\begin{split}
    X_{e}^{\pol}	& = I_{\overline{P}}^{G}(V)_{e}^{\pol}\cap(\ind_{\overline{P}}^{G}U)_{e}^{\pol} \\
	& =	I_{\overline{P}}^{G}(V)_{e}^{\pol}\cap C^{\pol}(N,U) \\
	&=	\im\left(U(\g)\otimes_{U(\p)}V\ra C^{\pol}(N,V)\right)\cap C^{\pol}(N,U),
\end{split}\end{equation} which is exactly $I_{\overline{P}}^{G}(U)_{e}^{\pol}$ as required. 
\end{proof}

\begin{remark}
The assumption that $G$ is a locally $\qp$-analytic group is only being used to show that (\ref{eqn:ind_exact}) is a strict short exact sequence. Suppose $L$ is a finite extension of $\qp$, and $G$ is a connected locally $L$-analytic reductive group. Then the theorem still holds with the additional assumption that (\ref{eqn:ind_exact}) is a strict short exact sequence. 
\end{remark}

\subsection{Homology groups}

For the remainder of this section, fix an admissible locally analytic $G$-representation $V\in \rep_{\textnormal{ad}}(G)$. Fix a decreasing sequence of good analytic open subgroups $\{H_{n}\}_{n\geq 0}$ of $G$ satisfying Proposition \ref{prop:good_grps}. For any ring $R$, let $Coh_{R}$ denote the abelian category of coherent $R$-modules \cite[Thm. 2.3]{Swan}. If $R$ is a coherent ring, then $Coh_{R}$ coincides with the category of finitely presented $R$-modules \cite[Cor. 2.7]{Swan}.

It is shown in \cite[Prop. 5.3.1]{analytic} that the isomorphism of topological $K$-algebras \[
    D^{\La}(H_{0},K)\cong \underset{\la}{\lim}\dn
\] induces a weak Fr\'{e}chet-Stein algebra structure on $D^{\La}(H_{0},K)$. By Theorem 1.2.11 of \cite{analytic}, $\vn$ is a finitely generated module of the coherent ring $D(H_{0},\h_{n}^{\circ})$, for each $n\geq 0$. Small augmentations to the proof of Proposition $A.1$ and Lemma $A.11$ of \cite{Jacquet2} show that there are topological isomorphisms \begin{equation} \label{eqn:D(Hn)V}
    \vn\cong \dn\widehat{\otimes}_{D^{\La}(H_{0},K)}V^{\prime}_{b} \cong \dn\otimes_{D^{\La}(H_{0},K)}V^{\prime}_{b}
\end{equation}
and $\vn$ is finitely presented as a module over the coherent ring $\dn$ for all $n\geq 0$. 

\begin{lemma}\label{lem:free_res}
Suppose $M$ is a finitely presented module of a coherent ring $R$. Then there is a projective resolution of $M$ consisting of free $R$-modules of finite rank. 
\end{lemma}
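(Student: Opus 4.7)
The plan is to build the resolution inductively, exploiting the defining property of coherent rings that every finitely generated submodule of a finitely presented module is again finitely presented (see \cite[Cor. 2.7]{Swan} for the equivalence between finitely presented modules and coherent modules over a coherent ring).

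First, since $M$ is finitely generated, choose a surjection $\varphi_{0}: R^{n_{0}} \twoheadrightarrow M$ from a finitely generated free $R$-module; set $K_{0} = \ker(\varphi_{0})$. Because $M$ is finitely presented, $K_{0}$ is a finitely generated submodule of $R^{n_{0}}$. The module $R^{n_{0}}$ is finitely presented (being free of finite rank), so coherence of $R$ forces $K_{0}$ to be finitely presented as well.

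Next, iterate: assuming $K_{i-1}$ has been shown to be finitely presented, pick a surjection $R^{n_{i}} \twoheadrightarrow K_{i-1}$ and compose with the inclusion $K_{i-1}\hookrightarrow R^{n_{i-1}}$ to obtain $\varphi_{i}: R^{n_{i}} \to R^{n_{i-1}}$ with image $K_{i-1}$. The kernel $K_{i}=\ker(\varphi_{i})$ is a finitely generated submodule of the finitely presented module $R^{n_{i}}$, and is therefore finitely presented by the same appeal to coherence. Splicing the surjections and inclusions yields a complex
\begin{equation*}
    \cdots \ra R^{n_{2}} \overset{\varphi_{2}}{\ra} R^{n_{1}} \overset{\varphi_{1}}{\ra} R^{n_{0}} \overset{\varphi_{0}}{\ra} M \ra 0
\end{equation*}
which is exact by construction, and whose terms in non-negative degree are free $R$-modules of finite rank, hence projective.

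There is no real obstacle here; the whole argument rests on the single input that over a coherent ring, finitely generated submodules of finitely presented modules remain finitely presented, which keeps the induction on syzygies alive. Absent coherence, the kernels $K_{i}$ would in general fail to be finitely generated and the construction would break at the first step.
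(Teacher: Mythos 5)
Your proof is correct and is exactly the standard syzygy argument; the paper itself simply cites \cite{Glaz} and \cite{Gersten} for this fact, and the argument those references contain is the same iterative construction you gave (choose a finite free cover, observe the kernel is a finitely generated submodule of a coherent module and hence finitely presented, and repeat). Nothing is missing — the one point worth making explicit, which you do implicitly, is that the kernel of a surjection from a finite free module onto a finitely presented module is always finitely generated, which is what gets the induction started before coherence takes over.
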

\begin{proof}
This is \cite[Cor. 2.5.2]{Glaz} or \cite[Rem. 1.4]{Gersten}.
\end{proof}

Let  $P_{n,\star}\ra \vn$ be a projective resolution of $\vn$ in the category $Coh_{D(H_{0},\h_{n}^{\circ})}$. The existence of such a resolution is guaranteed by the previous lemma. Proposition $A.10$ of \cite{Jacquet2} shows that each term $P_{n,\star}$ has a unique topology making them compact type $\dn$-modules. Additionally, the boundary maps are automatically continuous and strict. For each $n\geq 0$, let \[
    H_{\star}(N_{0},(V_{\h_{n}^{\circ}\An})^{\prime}_{b}) = H_{\star}((P_{n,\star})_{N_{0}})
\] be the usual group homology of $N_{0}$-coinvariants. It is important to note that by Lemma \ref{lem:Hd_coinv}, the Hausdorff $N_{0}$-coinvariants of each $P_{n,\star}$ coincides with the usual $N_{0}$-coinvariants, and so each term $(P_{n,\star})_{N_{0}}$ of the complex is a compact type $K$-algebra. For each $n,k\geq0$, let $\d_{n,k}:\,(P_{n,k})_{N_{0}}\ra (P_{n,k-1})_{N_{0}}$ be the boundary operators. Let $\widehat{H}_{\star}(N_{0},(V_{\h_{n}^{\circ}\An})^{\prime}_{b})$ be the Hausdorff completion of $H_{\star}(N_{0},(V_{\h_{n}^{\circ}\An})^{\prime}_{b})$. It can also be identified with the quotient of $\ker(\d_{k})$ by the closure of $\im(\d_{k+1})$ and is a $K$-algebra of compact type. Let $H^{\star}(N_{0},V_{\h_{n}^{\circ}\An})$ denote the strong dual of $\widehat{H}_{\star}(N_{0},(V_{\h_{n}^{\circ}\An})^{\prime}_{b})$, which is a nuclear Fr\'echet space.

Suppose $Q_{n,\star}\ra \vn$ is another projective resolution in the category $Coh_{D(H_{0},\h_{n}^{\circ})}$. The identity map on $\vn$ induces maps between complexes $P_{n,\star} \ra Q_{n,\star}$ and $Q_{n,\star}\ra P_{n,\star}$, which induces isomorphisms on the homology groups. Each of these maps are automatically continuous and strict \cite[Prop. A.10]{Jacquet2}. Therefore, the induced topology on the homology groups $H_{\star}(N_{0},\vn)$ does not depend on the choice of resolutions. 

\begin{proposition}\label{prop:H(V_L)->H(V_H)}
Suppose $L_{0}\sub H_{0}$ is a good analytic open subgroup. For each $n\geq 0$, let $L_{n}=H_{n}\cap L_{0}$, $M_{n}^{\prime} = M\cap L_{n}$, $\overline{P}_{n}^{\prime} = \overline{P}\cap L_{n}$, $N_{n}^{\prime}= N\cap L_{n}$ and let $\L_{n}$, $\m_{n}^{\prime}$, $\overline{\P}_{n}^{\prime}$ and $\n^{\prime}_{n}$ be their respective rigid analytic Zariski closures in $\L_{n}$. Let $m,n$ be integers large enough so that $H_{n}\sub L_{0}$ and $L_{m}\sub H_{n}$. Then there is a natural continuous $K$-linear map \begin{equation}\label{eqn:H(V_L)->H(V_H)}
    H_{\star}(N_{0}^{\prime},(V_{\L_{m}^{\circ}\An})^{\prime}_{b}) \ra H_{\star}(N_{0},(V_{\h_{n}^{\circ}\An})^{\prime}_{b})
\end{equation} which is induced by the natural continuous map   $((V_{\L_{m}^{\circ}\An})^{\prime}_{b})_{N^{\prime}_{0}} \ra ((V_{\h_{n}^{\circ}\An})^{\prime}_{b})_{N_{0}}$. 
In addition, there is an isomorphism between topological $D(M_{0}^{\prime},\m_{n}^{\prime,\circ})$-modules \begin{equation}\label{eqn:isom_of_}
    D(\overline{P}^{\prime}_{0},\overline{\P}_{n}^{\circ})\underset{D(\overline{P}^{\prime}_{0},\overline{\P}^{\prime,\circ}_{m})}{\otimes} H_{\star}(N_{0}^{\prime},(V_{\L_{m}^{\circ}\An})^{\prime}_{b}) \cong H_{\star}(N_{0}^{\prime},(V_{\h_{n}^{\circ}\An})^{\prime}_{b}).
\end{equation}
\end{proposition}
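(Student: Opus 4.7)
The plan is to construct both the morphism and the base-change isomorphism from a pair of compatible projective resolutions, using the flatness results of Section \ref{sec:dist}.

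For the first statement, I would fix projective resolutions $P_{m,\star}\surj (V_{\L_m^\circ\An})'_b$ in $Coh_{D(L_0,\L_m^\circ)}$ and $Q_{n,\star}\surj (V_{\h_n^\circ\An})'_b$ in $Coh_{D(H_0,\h_n^\circ)}$, both available by Lemma \ref{lem:free_res}. The inclusions $L_m\sub H_n$ and $L_0\sub H_0$ induce a continuous $K$-algebra homomorphism $D(L_0,\L_m^\circ)\to D(H_0,\h_n^\circ)$, and the restriction $V_{\h_n^\circ\An}\inj V_{\L_m^\circ\An}$ dualizes to a continuous $D(L_0,\L_m^\circ)$-equivariant morphism $(V_{\L_m^\circ\An})'_b\to(V_{\h_n^\circ\An})'_b$. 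Lifting to a chain map $P_{m,\star}\to Q_{n,\star}$ (unique up to homotopy), applying $(-)_{N_0'}$ on the source, and composing with the natural quotient $(-)_{N_0'}\to(-)_{N_0}$ on the target (valid since $N_0'\sub N_0$) produces a chain map whose induced map on homology is the desired arrow. Continuity and strictness at each step come from \cite[Prop. A.10]{Jacquet2}, and well-definedness on homology is the usual homotopy argument.

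For the isomorphism, the strategy is to realize $Q_{n,\star}$ itself as the base change $D(H_0,\h_n^\circ)\otimes_{D(L_0,\L_m^\circ)}P_{m,\star}$. Combining equation (\ref{eqn:D(Hn)V}) applied to both $H_0$ and $L_0$ with the decomposition $D^{\La}(H_0,K)\cong\bigoplus_{x\in H_0/L_0}\delta_x\star D^{\La}(L_0,K)$ gives the identification $(V_{\h_n^\circ\An})'_b\isom D(H_0,\h_n^\circ)\otimes_{D(L_0,\L_m^\circ)}(V_{\L_m^\circ\An})'_b$, provided the map $D(L_0,\L_m^\circ)\to D(H_0,\h_n^\circ)$ is flat. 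I would factor this map as $D(L_0,\L_m^\circ)\to D(L_0,\h_n^\circ)\to D(H_0,\h_n^\circ)$: the second map is free of finite rank by the analog of (\ref{eqn:dist3}), while the first is flat by an extension of Corollary \ref{cor:flat} obtained by retracing the proof of Theorem \ref{thm:flat} with indices $m$ and $n$ allowed to differ. Taking $(-)_{N_0'}$ of $Q_{n,\star}$ and using an analog of Lemma \ref{lem:Hd_coinv} to identify $D(L_0,\h_n^\circ)_{N_0'}$ with $D(\overline{P}_0',\overline{\P}_n^\circ)$ gives
\begin{equation*}
    (Q_{n,k})_{N_0'}\isom D(\overline{P}_0',\overline{\P}_n^\circ)\otimes_{D(\overline{P}_0',\overline{\P}_m^{\prime,\circ})}(P_{m,k})_{N_0'};
\end{equation*}
a further application of Corollary \ref{cor:flat} in the $\overline{P}$-setting then shows the displayed base-change map is flat, so it commutes with taking homology, yielding the required isomorphism.

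The main obstacle will be establishing the flatness of $D(L_0,\L_m^\circ)\to D(L_0,\h_n^\circ)$ with $m\neq n$, since Corollary \ref{cor:flat} is stated only with matched indices. I expect this to require reapplying Theorem \ref{thm:flat} with the appropriate pair of $\zp$-Lie lattices and checking that the valuation estimate on $\ord_K(\alpha_j)$ holds uniformly for the indices in play, together with a parallel extension of Lemma \ref{lem:Hd_coinv} to the asymmetric setting where the analytic neighbourhood comes from $H_0$ but the group is $L_0$.
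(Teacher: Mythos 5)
The first part of your argument — lifting the dual restriction map to a chain map between resolutions, applying coinvariants, and appealing to \cite[Prop. A.10]{Jacquet2} for continuity and strictness — is correct and close to what the paper does (the paper uses the specific chain map coming from base change, but your lifting argument yields the same map on homology). The second part, however, contains a genuine error. Your proposed base-change identity
\begin{equation*}
(V_{\h_{n}^{\circ}\An})^{\prime}_{b} \isom D(H_{0},\h_{n}^{\circ}) \underset{D(L_{0},\L_{m}^{\circ})}{\otimes} (V_{\L_{m}^{\circ}\An})^{\prime}_{b}
\end{equation*}
is false: combining equation (\ref{eqn:D(Hn)V}) with the decomposition $D^{\La}(H_{0},K)\cong\bigoplus_{x\in H_{0}/L_{0}}\delta_{x}\star D^{\La}(L_{0},K)$ shows that the right side is $D(H_{0},\h_{n}^{\circ})\otimes_{D^{\La}(L_{0},K)}V^{\prime}_{b}$, which is $[H_{0}:L_{0}]$ copies of $(V_{\h_{n}^{\circ}\An})^{\prime}_{b}$, not one. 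The correct base-change ring is the mixed algebra $D(L_{0},\h_{n}^{\circ})$ (compact group $L_{0}$, analyticity from $\h_{n}^{\circ}$), for which one does have $(V_{\h_{n}^{\circ}\An})^{\prime}_{b}\isom D(L_{0},\h_{n}^{\circ})\otimes_{D(L_{0},\L_{m}^{\circ})}(V_{\L_{m}^{\circ}\An})^{\prime}_{b}$. Because of this, your displayed formula for $(Q_{n,k})_{N_{0}^{\prime}}$ is off by the same factor: $N_{0}^{\prime}$-coinvariants of a rank-$r$ free $D(H_{0},\h_{n}^{\circ})$-module yield a module over $D(H_{0},\h_{n}^{\circ})_{N_{0}^{\prime}}$, which is strictly larger than $D(\overline{P}^{\prime}_{0},\overline{\P}_{n}^{\circ})^{r}$ since it still sees all of $\overline{N}_{0}$, $M_{0}$ and the cosets $N_{0}/N_{0}^{\prime}$ rather than the primed groups.

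The idea you are missing is the paper's opening observation: since $D(H_{0},\h_{n}^{\circ})\cong\bigoplus_{x\in H_{0}/L_{0}}\delta_{x}\star D(L_{0},\h_{n}^{\circ})$ is finite free over $D(L_{0},\h_{n}^{\circ})$, a finite free resolution of $(V_{\h_{n}^{\circ}\An})^{\prime}_{b}$ by $D(H_{0},\h_{n}^{\circ})$-modules is automatically a finite free resolution by $D(L_{0},\h_{n}^{\circ})$-modules, so all the relevant homology groups may be computed after restriction of scalars to $D(L_{0},\h_{n}^{\circ})$. One then resolves $(V_{\L_{m}^{\circ}\An})^{\prime}_{b}$ by free $D(L_{0},\L_{m}^{\circ})$-modules, base-changes over $D(L_{0},\L_{m}^{\circ})\ra D(L_{0},\h_{n}^{\circ})$ to get a $D(L_{0},\h_{n}^{\circ})$-free resolution of $(V_{\h_{n}^{\circ}\An})^{\prime}_{b}$, and the Iwahori decompositions of $\L_{m}$ and (the $L_{0}$-version of) $\h_{n}$ make the $N_{0}^{\prime}$-coinvariants of the two resolutions free $D(\overline{P}^{\prime}_{0},\overline{\P}^{\prime,\circ}_{m})$- and $D(\overline{P}^{\prime}_{0},\overline{\P}_{n}^{\circ})$-modules of the same rank, so the tensor passes through homology by flatness. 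Your stated ``main obstacle'' is actually not one: Theorem \ref{thm:flat} is already phrased for an arbitrary pair of compatible lattices $\L\sub\h$ without any requirement that their indices match, so it applies directly to $\overline{\P}^{\prime}_{m}\sub\overline{\P}_{n}$ (and to $\L_{m}\sub\h_{n}$); the real gap is the incorrect choice of base-change ring just described.
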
 
\begin{proof}
Since \begin{equation}
    \dn = \underset{x\in H_{0}/L_{0}}{\bigoplus} \delta_{x} \star D(L_{0},\h_{n}^{\circ})
\end{equation}
is a free $D(L_{0},\h_{n}^{\circ})$-module of finite rank, finitely presented projective $\dn$-modules are also finitely presented projective $D(L_{0},\h_{n}^{\circ})$-modules. This means that the homology group $H_{\star}(N_{0},\vn)$ can be computed by viewing $\vn$ as a $\dn$-module or as a $D(L_{0},\h_{n}^{\circ})$-module. 

Let $Q_{\star}\ra (V_{\L_{m}^{\circ}\An})^{\prime}_{b}$ be a projective resolution of $(V_{\L_{m}^{\circ}\An})^{\prime}_{b}$ by finite free $D(L_{0},\L_{m}^{\circ})$-modules. Similar to equation (\ref{eqn:D(Hn)V}), there are isomorphisms \begin{equation}\label{eqn:vlm_vn}\begin{split}
    \vn & \cong D(L_{0},\h_{n}^{\circ})\otimes_{D^{\La}(L_{0},K)}V^{\prime}_{b} \textnormal{ and} \\
    (V_{\L_{m}^{\circ}\An})^{\prime}_{b} & \cong D(L_{0},\L_{m}^{\circ})\otimes_{D^{\La}(L_{0},K)}V^{\prime}_{b}.
\end{split}\end{equation}
Let $P_{\star}$ be the base change of $Q_{\star}$ to $D(L_{0},\h_{n}^{\circ})$ over $D(L_{0},\L_{m}^{\circ})$. By Corollary \ref{cor:flat} and equation (\ref{eqn:vlm_vn}), $P_{\star}\ra \vn$ is a projective resolution of $\vn$ by finite free $D(L_{0},\h_{n}^{\circ})$-modules. By construction, there is a natural map of complexes $(Q_{\star})_{N_{0}^{\prime}}\ra (P_{\star})_{N_{0}}$ and the first assertion of the proposition follows.

We can compute $H_{\star}(N_{0}^{\prime},(V_{\h_{n}^{\circ}\An})^{\prime}_{b})$ by taking the $N_{0}^{\prime}$-coinvariants from the resolution $P_{\star}\ra \vn$. By the same argument as before, $H_{\star}(N_{0}^{\prime},(V_{\h_{n}^{\circ}\An})^{\prime}_{b})$ is naturally a $K$-vector space of compact type. It follows from the flatness of $D(\overline{P}^{\prime}_{0},\overline{\P}^{\prime,\circ}_{m}) \ra D(\overline{P}^{\prime}_{0},\overline{\P}_{n}^{\circ})$ (Corollary \ref{cor:flat}) that there is an isomorphism of topological $D(M_{0}^{\prime},\m_{n}^{\circ})$-modules \begin{equation*}\begin{split}
    D(\overline{P}^{\prime}_{0},\overline{\P}_{n}^{\circ})\underset{D(\overline{P}^{\prime}_{0},\overline{\P}^{\prime,\circ}_{m})}{\otimes} H_{\star}((Q_{\star})_{N_{0}^{\prime}}) \cong H_{\star}\left( D(\overline{P}^{\prime}_{0},\overline{\P}_{n}^{\circ})\underset{D(\overline{P}^{\prime}_{0},\overline{\P}^{\prime,\circ}_{m})}{\otimes} (Q_{\star})_{N_{0}^{\prime}}\right) \cong H_{\star}((P_{\star})_{N_{0}^{\prime}}).
\end{split} 
\end{equation*} This completes the proof of the proposition.   
\end{proof}

\begin{remark}
In the special case where $\{L_{n}\}_{n\geq 0}=\{H_{n}\}_{n\geq 0}$ and $m=n+1$, we have defined a natural transition map \[
    H_{\star}(N_{0},(V_{\h_{n+1}^{\circ}\An})^{\prime}_{b}) \ra H_{\star}(N_{0},(V_{\h_{n}^{\circ}\An})^{\prime}_{b})
\] which is additionally $D(M_{0},\m_{n+1}^{\circ})$-linear, because the natural map $D(H_{0},\h_{n+1}^{\circ})_{N_{0}}\ra D(H_{0},\h_{n}^{\circ})_{N_{0}}$ is $D(M_{0},\m_{n+1}^{\circ})$-linear. 
\end{remark}

\begin{corollary}\label{cor:same_inv_limit}
Suppose $\{H_{n}\}_{n\geq 0}$ and $\{\widetilde{H}_{n}\}_{n\geq 0}$ are two sequences of good analytic open subgroups of $G$ satisfying Proposition \ref{prop:good_grps}. Assume that $H_{0}\cap N = N_{0} = \widetilde{H}_{0}\cap N$. Let $M^{\prime}_{0}=H_{0}\cap \widetilde{H}_{0} \cap M$. Then there is a natural isomorphism of topological $D^{\La}(M^{\prime}_{0},K)$-modules \begin{equation}
    \underset{\underset{n}{\la}}{\lim}\widehat{H}_{\star}(N_{0},(V_{\h_{n}^{\circ}\An})^{\prime}_{b}) \cong \underset{\underset{n}{\la}}{\lim}\widehat{H}_{\star}(N_{0},(V_{\widetilde{\h}^{\circ}_{n}\An})^{\prime}_{b}).
\end{equation} 
\end{corollary}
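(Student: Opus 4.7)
The plan is to introduce a third sequence of good analytic open subgroups $\{L_n\}_{n\geq 0}$ of $G$ that refines both $\{H_n\}$ and $\{\widetilde{H}_n\}$, and to use Proposition \ref{prop:H(V_L)->H(V_H)} to compare both inverse limits in the statement to the one attached to $\{L_n\}$.

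\emph{Construction of the common refinement.} Let $\H,\widetilde{\H}\sub\g$ be the $\zp$-Lie sublattices giving rise to $H_0$ and $\widetilde{H}_0$. The hypothesis $H_0\cap N=N_0=\widetilde{H}_0\cap N$ translates to $\H\cap\N=\widetilde{\H}\cap\N=\N_0$. A suitable $\zp$-Lie sublattice $\mathfrak{l}\sub\H\cap\widetilde{\H}$ of full rank in $\g$ (obtained for instance by combining $\N_0$ with sub-lattices of $\M\cap\H\cap\widetilde{\H}$ and $\overline{\N}\cap\H\cap\widetilde{\H}$, possibly shrunk to ensure convergence of the BCH formula and an Iwahori decomposition) produces a good analytic open subgroup $L_0:=\exp(\mathfrak{l})\sub H_0\cap\widetilde{H}_0$ with $L_0\cap N=N_0$ and $L_0\cap M=M_0'$. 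Proposition \ref{prop:good_grps} then furnishes a decreasing sequence $\{L_n\}_{n\geq 0}$ starting at $L_0$ and satisfying all its conditions, in particular forming a basis of neighbourhoods of the identity in $G$.

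\emph{Comparison maps.} By cofinality of the three bases of neighbourhoods, for every $n$ we may pick $m(n)$ and $m'(n)$ with $L_{m(n)}\sub H_n$ and $L_{m'(n)}\sub\widetilde{H}_n$. Since $L_0\cap N=N_0$, Proposition \ref{prop:H(V_L)->H(V_H)} applies in both settings and produces natural continuous $K$-linear maps
\begin{equation*}
    H_\star(N_0,(V_{\L_{m(n)}^\circ\An})'_b)\ra H_\star(N_0,(V_{\h_n^\circ\An})'_b),
\end{equation*}
compatible with the transition maps in $n$, and analogously for the $\widetilde{H}$-sequence. Hausdorff completion and passage to the inverse limit produce continuous $D^{\La}(M_0',K)$-linear morphisms
\begin{equation*}
    \Phi:\,\underset{\underset{m}{\la}}{\lim}\,\widehat{H}_\star(N_0,(V_{\L_m^\circ\An})'_b)\ra \underset{\underset{n}{\la}}{\lim}\,\widehat{H}_\star(N_0,(V_{\h_n^\circ\An})'_b),
\end{equation*}
and, by the same construction on the $\widetilde{H}$-side, a map $\widetilde{\Phi}$ with target $\underset{\underset{n}{\la}}{\lim}\,\widehat{H}_\star(N_0,(V_{\widetilde{\h}_n^\circ\An})'_b)$. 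The desired isomorphism will then be obtained as $\widetilde{\Phi}\circ\Phi^{-1}$, provided both maps are topological isomorphisms.

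\emph{Isomorphism and main obstacle.} To show $\Phi$ is a topological isomorphism (the argument for $\widetilde{\Phi}$ is symmetric), apply the second assertion (\ref{eqn:isom_of_}) of Proposition \ref{prop:H(V_L)->H(V_H)}, which gives for each $n$
\begin{equation*}
    H_\star(N_0,(V_{\h_n^\circ\An})'_b)\cong D(\overline{P}_0,\overline{\P}_n^\circ)\underset{D(\overline{P}_0',\overline{\P}^{\prime,\circ}_{m(n)})}{\otimes} H_\star(N_0,(V_{\L_{m(n)}^\circ\An})'_b).
\end{equation*}
After Hausdorff completion and taking inverse limits over $n$, the identifications $\underset{\underset{n}{\la}}{\lim}\,D(\overline{P}_0,\overline{\P}_n^\circ)\cong D^{\La}(\overline{P}_0,K)$ (and analogously for the primed sequence), together with the flatness provided by Corollary \ref{cor:flat}, are intended to reduce the extension of scalars to the identity, exhibiting $\Phi$ as an isomorphism. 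The main obstacle is exactly this last step: the exchange of inverse limit and completed tensor product in the compact type / Fréchet--Stein setting is not automatic, and its rigorous justification will require careful topological input, combining the flatness and strictness of the transition maps with Mittag--Leffler-type arguments, or alternatively constructing a projective resolution of $V'_b$ simultaneously adapted to all three distribution-algebra structures (whose existence is guaranteed by the conditions of Proposition \ref{prop:good_grps}).
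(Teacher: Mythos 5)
Your overall plan — introduce a common refinement $L_0$ of $H_0$ and $\widetilde H_0$, compare each of the two inverse limits to the one built from the $L$-sequence via Proposition~\ref{prop:H(V_L)->H(V_H)}, and compose — is the same route the paper takes. But the write-up leaves two real gaps.

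\textbf{The construction of $L_0$ is over-engineered and, as written, risks breaking the module structure.} Proposition~\ref{prop:H(V_L)->H(V_H)} only requires $L_0\sub H_0$ to be a good analytic open subgroup; the interior sequence is then defined as $L_n = H_n\cap L_0$, and it is only the ambient sequence $\{H_n\}$ that must satisfy Proposition~\ref{prop:good_grps}. The paper simply takes $L_0 = H_0\cap\widetilde H_0$, which is automatically good (it is the exponential of $\H\cap\widetilde\H$), and which inherits $L_0\cap N=N_0$ and $L_0\cap M = M'_0$ from the hypotheses. If, as you propose, you shrink the $\M$- and $\overline{\N}$-components of the lattice to force convergence of BCH and a fresh Iwahori decomposition, you will generically destroy the equality $L_0\cap M = M'_0$, and the isomorphism you obtain through the $L$-sequence would only be $D^{\La}(L_0\cap M,K)$-linear for a possibly smaller group, weaker than the $D^{\La}(M'_0,K)$-linearity asserted. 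None of the extra shrinking is needed.

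\textbf{The proposal does not finish the proof.} You construct the maps $\Phi$ and $\widetilde\Phi$ and then stop at what you call the ``main obstacle'' — you never establish that $\Phi$ is a topological isomorphism; you only speculate that Mittag--Leffler arguments or a triply adapted resolution might be needed. This is the entire content of the statement, so the proposal is incomplete. In fact the ingredient you need is already in hand: equation~(\ref{eqn:isom_of_}) of Proposition~\ref{prop:H(V_L)->H(V_H)} identifies $\widehat H_\star(N_0,(V_{\h_n^\circ\An})'_b)$ with the extension of scalars of $\widehat H_\star(N_0,(V_{\L_n^\circ\An})'_b)$ along $D(\overline P'_0,\overline{\P}^{\prime,\circ}_n)\ra D(\overline P'_0,\overline{\P}_n^\circ)$, compatibly with the transition maps in $n$; passing to Hausdorff completion and then to the projective limit over $n$ makes the two filtered systems cofinal (both $\{H_n\}$ and $\{L_n\}$ are neighbourhood bases of the identity), and since the transition maps are strict between compact type/Fréchet spaces the limits agree. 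You should spell out that cofinality argument rather than leaving it open.
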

\begin{proof}
Let $L_{0}= H_{0}\cap \widetilde{H}_{0}$, and define $L_{n}$, $M_{n}^{\prime}$, $\overline{P}_{n}^{\prime}$, $N_{n}^{\prime}$, $\L_{n}$, $\n_{n}^{\prime}$, $\overline{\P}_{n}^{\prime}$ and $\n^{\prime}_{n}$ as in the statement of the previous proposition. By taking the Hausdorff completion followed by projective limit of (\ref{eqn:isom_of_}), there is an isomorphism of topological $D^{\La}(M^{\prime}_{0},K)$-modules \begin{equation*}
    \underset{\underset{n}{\la}}{\lim}\widehat{H}_{\star}(N_{0},(V_{\h_{n}^{\circ}\An})^{\prime}_{b}) \cong \underset{\underset{n}{\la}}{\lim}\widehat{H}_{\star}(N_{0},(V_{\L^{\circ}_{n}\An})^{\prime}_{b}).
\end{equation*} The corollary follows from composing with the inverse of the analogous isomorphism derived using $\widetilde{H}_{0}$ in place of $H_{0}$. 
\end{proof}

We close this subsection with an important theorem that is critical for computing these homology groups. The reader can compare the similarity of this result to \cite[Thm. 4.10, Thm. 7.1]{Kohlhaase}. The author would like to thank Vaughan McDonald for pointing out a mistake relating to the theorem in an earlier draft. 

\begin{theorem}\label{thm:lie_cohom}
Let $\N$ denote the Lie algebra of $N$ and suppose $V$ is an admissible locally analytic $G$ representation. Then there is a natural isomorphism \begin{equation}
    \underset{\underset{n}{\la}}{\lim} \,H_{\star}(N_{0},\vn) \cong H_{\star}(\N,V^\prime_{b})_{N_{0}}. 
\end{equation}
\end{theorem}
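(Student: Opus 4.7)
The plan is to compare the group-homology side to a Chevalley--Eilenberg (CE) complex via compatible Koszul-type resolutions, then pass to the inverse limit. The key input is the Iwahori-type decomposition of Lemma \ref{lem:Hd_coinv}: it gives $D(H_0, \h_n^{\circ})$ the structure of a (flat, indeed free) module over an appropriate $N_0$-sided subalgebra, so the standard Koszul resolution $U(\N) \otimes_{K} \Lambda^{\bullet} \N \to K$ of the trivial $U(\N)$-module can be lifted to a projective resolution of the trivial $D(H_0, \h_n^{\circ})$-module in $Coh_{D(H_0, \h_n^{\circ})}$. Tensoring this resolution with $\vn$ over $D(H_0, \h_n^{\circ})$ and taking $N_0$-coinvariants produces a complex isomorphic to $\Lambda^{\bullet} \N \otimes_{K} \vn$ (with Koszul-type differentials), whose homology is, by construction, $H_{\star}(N_0, \vn)$.

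Next, I would pass to the inverse limit over $n$. Using Corollary \ref{cor:same_inv_limit} together with the coadmissibility of $V^{\prime}_b$, the system $\{\vn\}_{n}$ has inverse limit $V^{\prime}_b$ as a topological module. Since the CE complexes are finite-length complexes of compact-type spaces, standard Mittag--Leffler arguments (available thanks to the coadmissible structure) give that $\lim_{n}$ commutes with homology up to vanishing $\lim^{1}$ terms. The limit complex is $\Lambda^{\bullet} \N \otimes_{K} V^{\prime}_b$ with standard CE differentials, whose homology is $H_{\star}(\N, V^{\prime}_b)$. The further $N_0$-coinvariants on the right-hand side of the theorem are reconciled with this as follows: each $(\vn)_{N_0}$ is automatically Hausdorff by Lemma \ref{lem:Hd_coinv}, whereas the algebraic Lie-algebra quotient $V^{\prime}_b / \N V^{\prime}_b$ is generally not; taking further $N_0$-coinvariants realizes the Hausdorff completion, since $\N V^{\prime}_b$ and $(N_0 - 1) V^{\prime}_b$ have the same topological closure via the expansion $\exp(X) - 1 = X + X^{2}/2! + \cdots$ inside the distribution algebra.

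\textbf{Main obstacle.} The principal technical difficulty is the careful management of topologies in the coherent/coadmissible setting. Specifically, one must (i) verify that the lifted Koszul complex is indeed a projective resolution in $Coh_{D(H_0, \h_n^{\circ})}$, which requires analyzing the relationship between the left ideal generated by $\N$ and the augmentation ideal generated by $N_0 - 1$ inside $D(H_0, \h_n^{\circ})$; (ii) verify the Mittag--Leffler condition, so that $\lim^{1}_{n} H_{\star}(N_0, \vn)$ vanishes and the limit passes through homology; and (iii) rigorously identify the $N_0$-coinvariants appearing on the right with the Hausdorff quotient that emerges on the left. Step (iii) is the conceptual heart of the argument and makes essential use of the compact-type topology on each $H_{\star}(N_0, \vn)$, together with the density of $U(\N)$ in $D(N_0, \n_n^{\circ})$.
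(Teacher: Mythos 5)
Your proposal takes a genuinely different route from the paper's proof, but it contains a decisive error in its third step. The paper's argument proceeds by (i) invoking Kohlhaase's $\Tor$-theoretic description $H_\star(\N,\vn)\cong\Tor_\star^{D^{\La}(N_0,K)}(\vn,D^{\sm}(N_0,K))$ and a flat base change to $D(N_0,\n_n^\circ)$; (ii) observing that a finite free resolution $P_{n,\star}$ of $\vn$ over $\dn$, once reduced modulo $\N$, becomes a complex of \emph{smooth} $N_0$-representations, on which $N_0$-coinvariants are exact, yielding $H_\star(N_0,\vn)\cong H_\star(\N,\vn)_{N_0}$; and (iii) passing to the limit not by Mittag--Leffler but by Poincar\'e duality for Lie algebra cohomology together with the fact that cohomology commutes with the direct limit over $n$ of locally analytic vectors. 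Your proposal replaces (i)--(ii) by a lifted Koszul resolution and (iii) by a Mittag--Leffler argument; the Koszul idea is morally sound (it is what lies underneath Kohlhaase's result), but your treatment blurs the distinction between $H_\star(\N,\vn)$ and $H_\star(N_0,\vn)$.

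The fatal issue is your step (iii), where you assert that $\N V'_b$ and $(N_0-1)V'_b$ have the same topological closure ``via the expansion $\exp(X)-1=X+X^2/2!+\cdots$''. This is false, and the discrepancy is precisely the content of the extra $N_0$-coinvariants on the right-hand side of the theorem. Under the Amice transform $D^{\La}(\zp,K)\cong\mathcal O(\text{open unit disk})$, the Lie algebra generator $X$ corresponds to $\log(1+T)$ while $\exp(X)-1$ corresponds to $T$. One has $\log(1+T)=T\cdot g(T)$ with $g(T)=1-T/2+\cdots$, and $g$ vanishes at every $T=\zeta-1$ for $\zeta$ a nontrivial $p$-power root of unity, so $g$ is \emph{not} a unit; hence $(\log(1+T))\subsetneq(T)$ as closed ideals, i.e.\ $D^{\La}(N_0,K)\,\N\subsetneq D^{\La}(N_0,K)(N_0-1)$. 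Thus $\N V'_b$ is strictly smaller than $(N_0-1)V'_b$ in general, and your claimed identification collapses the very quantity $(-)_{N_0}$ the theorem is about. The correct way to bridge the gap is the paper's observation that $(P_{n,\star})_\N$ is a smooth $N_0$-module (its $\N$-action is trivial by construction, and it is a finite sum of $D^{\sm}(\overline P_0,\cdot)$-type modules) together with exactness of $N_0$-coinvariants on smooth $N_0$-representations. Your Mittag--Leffler step is also unsubstantiated: you give no reason why the transition maps in the inverse system $\{H_\star(\N,\vn)\}_n$ should satisfy Mittag--Leffler, and the paper sidesteps this entirely by dualizing via Poincar\'e duality and using that $H^\star(\N,V)$ is a direct limit of $H^\star(\N,V_{\h_n^\circ\An})$.
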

\begin{proof}
Let $C^{\sm}(N_{0},K)$ be the space of locally constant functions on $N_{0}$. Let $D^{\sm}(N_{0},K)$ and $D^{\sm}(N_{0},\n_{n}^{\circ})$ be the strong duals of $C^{\sm}(N_{0},K)$ and $C^{\sm}(N_{0},K)_{\n_{n}^{\circ}\An}$ respectively. These can be identified with the $\N$-coinvariants of $D^{\La}(N_{0},K)$ and $D(N_{0},\n_{n}^{\circ})$ respectively. There is an isomorphism \begin{equation*}\begin{split}
    D(N_{0},\n_{n}^{\circ})\otimes_{D^{\La}(N_{0},K)}D^{\sm}(N_{0},K) & \cong D(N_{0},\n_{n}^{\circ})\otimes_{D^{\La}(N_{0},K)}D^{\La}(N_{0},K) / D^{\La}(N_{0},K)\N \\
    & \cong D(N_{0},\n_{n}^{\circ})/D(N_{0},\n_{n}^{\circ})\N \cong D^{\sm}(N_{0},\n_{n}^{\circ}).
\end{split}
\end{equation*}

From \cite[(47)]{Kohlhaase} and \cite[pg. 306]{ST_duality}, there is an isomorphism \begin{equation}
    \Tor_{\star}^{D^{\La}(N_{0},K)}(\vn, D^{\sm}(N_{0},K)) \cong H_{\star}(\N,\vn),
\end{equation} where the torsion group is computed in the category of abstract $D^{\La}(N_{0},K)$-modules. This is further isomorphic to \begin{equation}\label{eqn:tor}\begin{split}
    \Tor_{\star}^{D(N_{0},\n_{n}^{\circ})}(\vn, &  D(N_{0},\n_{n}^{\circ})\otimes_{D^{\La}(N_{0},K)}D^{\sm}(N_{0},K)) \\
    & \cong \Tor_{\star}^{D(N_{0},\n_{n}^{\circ})}(\vn, D^{\sm}(N_{0},\n_{n}^{\circ})).
\end{split}
\end{equation} 

Suppose $P_{n,\star}$ is a finite free resolution of $\vn$ by $\dn$-modules, which can be viewed as a free resolution of abstract $D(N_{0},\n_{n}^{\circ})$-modules. Equation (\ref{eqn:tor}) shows the homology of the complex $(P_{n,\star})_{\N}$ can be identified with $H_{\star}(\N,\vn)$. Recall that taking $N_{0}$-coinvariants is exact on smooth $N_{0}$ representations \cite[Prop 3.2.3]{Casselman}. Taking into account of the fact that $(P_{n,\star})_{N_{0}}=((P_{n,\star})_{\N})_{N_{0}}$, we see that \begin{equation}
    H_{\star}(N_{0},\vn) \cong H_{\star}(\N,\vn)_{N_{0}}. 
\end{equation} By Poincar\'e duality \cite[Thm 6.10]{Knapp}, the homology groups $H_{\star}(\N,\vn)$ and $H_{\star}(\N,V^\prime_b)$ are dual to the cohomology groups $H^{k-\star}(\N,V_{\h_n^\circ\An})$ and $H^{k-\star}(\N,V)$ respectively, where $k$ is the dimension of $\N$. Since $H^{k-\star}(\N,V)$ is the direct limit of $H^{k-\star}(\N,V_{\h_n^\circ\An})$, it follows that $H_{\star}(\N,V^\prime_b)$ is the inverse limit of $H_{\star}(\N,\vn)$. 
\end{proof}

\begin{remark}\label{rmk:lie_cohom}
From theorem \ref{thm:lie_cohom}, we may want to conclude a similar comparison theorem for cohomology groups of the form: 
\begin{equation}\label{eq:lie_cohom}
    \underset{\underset{n}{\ra}}{\lim}\,H^{\star}(N_{0},V_{\h_{n}^{\circ}\An}) \cong H^{\star}(\N,V)^{N_{0}}.
\end{equation}
However, while the cohomology groups $H^{\star}(N_{0},V_{\h_{n}^{\circ}\An})$ are Hausdorff by definition (it is the dual of the Hausdorff completion of the homology groups), the Lie algebra cohomology need not be Hausdorff. In the case where the Lie algebra cohomology is Hausdorff, then (\ref{eq:lie_cohom}) does hold by applying Poincar\'e duality to the result of theorem \ref{thm:lie_cohom}. 
\end{remark}


\subsection{Hecke action}\label{sec:Hecke}
Let $Z^{+}$ be the submonoid of $Z_{M}^{+}$ consisting of elements $z$ satisfying $z^{-1}\overline{N}_{0}z\sub \overline{N}_{0}$ and $zN_{0}z^{-1}\sub N_{0}$. A variant of \cite[Prop. 3.3.2 (i)]{Jacquet1} show that $Z^{+}$ generates $Z_{M}$ as a group. Let $z$ be a fixed element of $Z^{+}$. The goal of this subsection is to define a $D(M_{0},\m_{n}^{\circ})$-linear action on the homology groups $H_{\star}(N_{0},\vn)$, which extends the Hecke operator $\pi_{N_{0},z}$ on $V^{N_{0}}$ given by (\ref{eqn:original_Hecke}).

Let $z\in Z^{+}$. For each $n\geq 0$, let $\h(z)_{n}$ and $\h(z^{-1})_{n}$ be the rigid analytic subgroups of $\h$ corresponding to the $\zp$-sublattices $\Ad_{z^{-1}}(\H_{n})\cap \H_{n}$ and $\Ad_{z}(\H_{n})\cap \H_{n}$ respectively. Let $H(z)_{n}=\h(z)_{n}(\qp)$ and $H(z^{-1})_{n}=\h(z^{-1})_{n}(\qp)$. By considering the Iwahori decompositions of rigid analytic groups\begin{align*}
    z^{-1}\h_{n}z &= z^{-1}\overline{\n}_{n}z \times \m_{n} \times z^{-1}\n_{n}z \textnormal{ and}\\
    z\h_{n}z^{-1} &= z\overline{\n}_{n}z^{-1} \times \m_{n} \times z\n_{n}z^{-1},
\end{align*} and conditions (5) and (6) of Proposition \ref{prop:good_grps}, the rigid analytic groups $\h(z)_{n}$ and $\h(z^{-1})_{n}$ admit Iwahori decompositions \begin{equation}\label{eqn:Iwahori_H(z)}
\begin{split}
    \h(z)_{n} &=  z^{-1}\overline{\n}_{n}z\times \m_{n} \times \n_{n}  \textnormal{ and}\\
    \h(z^{-1})_{n} &= \overline{\n}_{n} \times \m_{n} \times z\n_{n}z^{-1}.
\end{split}
\end{equation} Similarly, there are Iwahori decompositions \begin{align*}
    H(z)_{n} &= z^{-1}\overline{N}_{n}z \times M_{n} \times N_{n} \textnormal{ and}\\
    H(z^{-1})_{n} &= \overline{N}_{n}\times M_{n} \times zN_{n}z^{-1} .
\end{align*}

Let $n\geq 0$ be large enough so that $H_{n}\sub H(z)_{0}$ and $H_{n}\sub H(z^{-1})_{0}$. Define the restriction map to be the composition \begin{equation}\label{eqn:res}
    \begin{split}
        \res : H_{\star}(N_{0},\vn) & \ra H_{\star}(N_{0},\ind_{K[zN_{0}z^{-1}]}^{K[N_{0}]}\vn) \\
        & \cong H_{\star}(N_{0},K[N_{0}]\otimes_{K[zN_{0}z^{-1}]}\vn) \\
        & \cong H_{\star}(zN_{0}z^{-1},\vn).
    \end{split}
\end{equation}

\begin{proposition}\label{prop:Hecke_isom}
There is an isomorphism of topological $D(M_{0},\m_{n}^{\circ})$-modules \[
    H_{\star}(zN_{0}z^{-1},\vn) \cong H_{\star}(zN_{0}z^{-1},\vln).
\]
\end{proposition}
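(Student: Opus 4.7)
My plan is to realize both sides of the claimed isomorphism as computations of $zN_{0}z^{-1}$-homology via free resolutions in the categories of coherent modules over the distribution algebras $\dn$ and $D(H(z^{-1})_{0},\h(z^{-1})_{n}^{\circ})$, and then to identify these computations using the flatness results of Section \ref{sec:dist}. The key structural input is the Iwahori decomposition (\ref{eqn:Iwahori_H(z)}), which shows that $\h_{n}^{\circ}$ and $\h(z^{-1})_{n}^{\circ}$ share the same $\overline{\mathbb{P}}_{n}^{\circ}$-component $\overline{\n}_{n}^{\circ}\times\m_{n}^{\circ}$, and differ only in the $N$-direction ($\n_{n}^{\circ}$ versus $z\n_{n}^{\circ}z^{-1}$); since we are taking $zN_{0}z^{-1}$-coinvariants, the difference in the $N$-component should disappear from the final computation.

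First, I would realize both modules as base changes of $V^{\prime}_{b}$ using the analogue of equation (\ref{eqn:D(Hn)V}): namely, $\vn \cong \dn\otimes_{D^{\La}(H_{0},K)}V^{\prime}_{b}$ and $\vln \cong D(H(z^{-1})_{0},\h(z^{-1})_{n}^{\circ})\otimes_{D^{\La}(H(z^{-1})_{0},K)}V^{\prime}_{b}$. Since $H(z^{-1})_{0}$ is an open subgroup of $H_{0}$ (its defining $\zp$-sublattice $\textnormal{Ad}_{z}(\H_{0})\cap\H_{0}$ is contained in $\H_{0}$) and $\h(z^{-1})_{n}^{\circ}\subseteq \h_{n}^{\circ}$, a natural continuous homomorphism of compact type algebras $D(H(z^{-1})_{0},\h(z^{-1})_{n}^{\circ})\to D(H(z^{-1})_{0},\h_{n}^{\circ})$ exists for $n$ sufficiently large, and by Corollary \ref{cor:flat} this map is flat.

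Second, I would take a finite free resolution of $\vln$ over $D(H(z^{-1})_{0},\h(z^{-1})_{n}^{\circ})$ via Lemma \ref{lem:free_res}, and base change along the flat map from Step 1. By flatness, this produces a finite free resolution of the base-changed module over $D(H(z^{-1})_{0},\h_{n}^{\circ})$, which I would identify with $\vn$ restricted to an $H(z^{-1})_{0}$-action by comparing the two descriptions through base change of $V^{\prime}_{b}$. Taking $zN_{0}z^{-1}$-coinvariants of either resolution should then produce the same complex up to quasi-isomorphism, using Lemma \ref{lem:Hd_coinv} (to ensure that coinvariants remain Hausdorff and match a common distribution algebra target) together with the identification of the $\overline{\mathbb{P}}_{n}^{\circ}$-components from the Iwahori decomposition.

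The main obstacle will be the identification in Step 2 of the base-changed module with $\vn$---this requires carefully verifying, perhaps via an argument analogous to the use of the untwisting Lemma \cite[Lem. 3.6.4]{analytic} in the proof of Lemma \ref{lem:Hd_coinv}, that the natural map $D(H(z^{-1})_{0},\h_{n}^{\circ})\otimes_{D^{\La}(H(z^{-1})_{0},K)} V^{\prime}_{b}\to \vn$ induces an isomorphism of $D(M_{0},\m_{n}^{\circ})$-modules after passing to $zN_{0}z^{-1}$-coinvariants. Once this compatibility is in place, the resulting isomorphism on homology is automatically $D(M_{0},\m_{n}^{\circ})$-linear because $z\in Z_{M}$ commutes with $M_{0}$, so the $D(M_{0},\m_{n}^{\circ})$-structure is preserved throughout the base-change argument.
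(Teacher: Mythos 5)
Your proposal is correct and follows essentially the same route as the paper: the paper cites the second assertion of Proposition \ref{prop:H(V_L)->H(V_H)} (taking $L_{0}=H(z^{-1})_{0}$, $N_{0}^{\prime}=zN_{0}z^{-1}$) together with the observation that $\h_{n}$ and $\h(z^{-1})_{n}$ share the same $\overline{\mathbb{P}}$-component, so the base-change tensor product in that assertion degenerates to the identity. Your proposal simply unpacks that citation into its constituent steps — the base-change description $\vln \cong D(H(z^{-1})_{0},\h(z^{-1})_{n}^{\circ})\otimes_{D^{\La}(H(z^{-1})_{0},K)}V^{\prime}_{b}$ and $\vn \cong D(H(z^{-1})_{0},\h_{n}^{\circ})\otimes_{D^{\La}(H(z^{-1})_{0},K)}V^{\prime}_{b}$, the flatness from Corollary \ref{cor:flat}, a free resolution base-changed along the flat map, and the fact (via Lemma \ref{lem:Hd_coinv} and the Iwahori decomposition (\ref{eqn:Iwahori_H(z)})) that the $zN_{0}z^{-1}$-coinvariants of $D(H(z^{-1})_{0},\h(z^{-1})_{n}^{\circ})$ and $D(H(z^{-1})_{0},\h_{n}^{\circ})$ agree — which is exactly how Proposition \ref{prop:H(V_L)->H(V_H)} is proved. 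The "obstacle" you flag in Step 2 is actually immediate from the two base-change descriptions of $\vln$ and $\vn$ quoted above (compare equation (\ref{eqn:vlm_vn})), and no invocation of the untwisting Lemma is needed for this step; the untwisting Lemma is only used to establish the product structure on coinvariants in Lemma \ref{lem:Hd_coinv}, which you already cite.
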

\begin{proof}
This follows from the second assertion of Proposition \ref{prop:H(V_L)->H(V_H)} with the observation that $\h_{n}$ and $\h(z^{-1})_{n}$ have the same ``opposite parabolic part". 
\end{proof}

The Iwahori decomposition of the rigid analytic groups $H(z)_{n}$ and $H(z^{-1})_{n}$ show that there is a $D(M_{0},\m_{n}^{\circ})$-linear isomorphism \[
    z^{-1}D(H(z^{-1})_{0},\h(z^{-1})_{n}^{\circ})z \cong D(H(z)_{0},\h(z)_{n}^{\circ}).
\] A simple argument on the resolution level then shows that conjugation by $z^{-1}$ induces a topological $D(M_{0},\m_{n}^{\circ})$-module isomorphism \begin{equation}\label{eqn:vln_vhn}
    H_{\star}(zN_{0}z^{-1},\vln) \cong H_{\star}(N_{0},\vhn). 
\end{equation} 

For each $n$ large enough so that $H_{n}\sub H(z)_{0}$ and $H_{n}\sub H(z^{-1})_{0}$, let $\pi_{N_{0},n,z}^{\prime}$ be the $D(M_{0},\m_{n}^{\circ})$-linear endomorphism of $H_{\star}(N_{0},\vn)$ defined by composing the following \begin{equation}\label{eqn:Hecke}\begin{split}
    H_{\star}(N_{0},\vn) & \overset{\res,\, (\ref{eqn:res})}{\longrightarrow} H_{\star}(zN_{0}z^{-1},\vn) \overset{\textnormal{Prop. }\ref{prop:Hecke_isom}}{\cong} H_{\star}(zN_{0}z^{-1},\vln) \\
    & \overset{(\ref{eqn:vln_vhn})}{\cong}H_{\star}(N_{0},\vhn) \overset{Prop. \ref{prop:H(V_L)->H(V_H)}} {\longrightarrow} H_{\star}(N_{0},\vn)
\end{split}\end{equation} with the multiplication by $\delta_{P}(z)$ map on $H_{\star}(N_{0},\vn)$. It is not hard to check that the natural transition maps $H_{\star}(N_{0},(V_{\h^{\circ}_{n+1}\An})^{\prime}_{b})\ra H_{\star}(N_{0},\vn)$ are $Z^{+}$-equivariant. 

\begin{remark}\label{rmk: Hecke_on_terms}
By our construction, we can actually define $\pi_{N_{0},n,z}^{\prime}$ on the terms of the complex computing the homology group. More specifically, suppose $\dn^{r_{n,\star}}\ra \vn$ is a free resolution of $\vn$. By the Iwahori decomposition of $H(z)_{0}$ and $H(z^{-1})_{0}$ (see (\ref{eqn:Iwahori_H(z)})), all of the following indices are equal \[
    \left[H_{0}:H(z^{-1})_{0}\right] = \left[N_{0}:zN_{0}z^{-1}\right] = \left[H_{0}:H(z)_{0}\right].
\] Let $u$ be this constant integer. Let $\tilde{\pi}_{N_{0},n,z}^{\prime}$ be the following composition of maps \begin{equation}\begin{split}
    \tilde{\pi}_{N_{0},n,z}^{\prime}: \dn^{r_{n,\star}}_{N_{0}} \overset{\res}{\longrightarrow} & \dn^{r_{n,\star}}_{zN_{0}z^{-1}} \overset{}{\cong} D(H(z^{-1})_{0},\h_{n}^{\circ})^{r_{n,\star} u}_{zN_{0}z^{-1}} \\
    \overset{Prop. \ref{prop:Hecke_isom}}{\cong} & D(H(z^{-1})_{0},\h(z^{-1})_{n}^{\circ})^{r_{n,\star} u}_{zN_{0}z^{-1}} \overset{z^{-1}(\cdot)z}{\cong} D(H(z)_{0},\h(z)_{n}^{\circ})^{r_{n,\star} u}_{N_{0}} \\
    \overset{}{\longrightarrow} & D(H(z)_{0},\h_{n}^{\circ})^{r_{n,\star} u}_{N_{0}} \overset{}{\cong} D(H_{0},\h_{n}^{\circ})^{r_{n,\star}}_{N_{0}}.
\end{split}\end{equation} Then $\tilde{\pi}_{N_{0},n,z}^{\prime}$ induces $\frac{1}{\delta_{P}(z)}\pi_{N_{0},n,z}^{\prime}$ on the homology groups $H_{\star}(N_{0},\vn)$. 
\end{remark}

The rest of the subsection is devoted to checking that $\pi_{N_{0},0,z}^{\prime}$ is compatible with $\pi_{N_{0},z}^{\prime}$ (the dual of (\ref{eqn:original_Hecke})) in degree 0. More specifically, we would like to check that the following diagram commutes \begin{equation}\label{diag: Hecke_compatibility1}
\begin{tikzcd}
    (V^{N_{0}})^{\prime}_{b} \arrow[r,"\cong"] \arrow[d,"\pi_{N_{0},z}^{\prime}"] & (V^{\prime}_{b})_{N_{0}} \arrow[r,"v^{\prime}\map 1 \otimes v^{\prime}"] & (\dn \otimes_{D^{\La}(H_{0},K)} V^{\prime}_{b})_{N_{0}} \arrow[r,"\cong"] & (\vn)_{N_{0}} \arrow[d,"\pi_{N_{0},0,z}^{\prime}"] \\
    (V^{N_{0}})^{\prime}_{b} \arrow[r,"\cong"] & (V^{\prime}_{b})_{N_{0}} \arrow[r,"v^{\prime}\map 1 \otimes v^{\prime}"] & (\dn \otimes_{D^{\La}(H_{0},K)} V^{\prime}_{b})_{N_{0}} \arrow[r,"\cong"] & (\vn)_{N_{0}}
\end{tikzcd}. 
\end{equation} We use some ideas similar to \cite[Lem. 4.2.19, Lem. 4.2.21]{analytic}.  

Let \begin{equation*}
    \lambda: \,H_{0} \ra \overline{N}_{0} \times M_{0} = \overline{P}_{0} \textnormal{ and }\rho:  \,H_{0} \ra N_{0}
\end{equation*} and be the natural projections coming from the Iwahori decomposition of $H_{0}$. Recall that the map $K[H_{0}]\ra \dn$ by sending an element $h\in H_{0}$ to the dirac distribution $\delta_{h}$ is dense. The Hecke operator $\pi^{\prime}_{N_{0},0,z}$ acts on an element $\delta_{h}\otimes v^{\prime}\in (\dn \otimes_{D^{\La}(H_{0},K)} V^{\prime}_{b})_{N_{0}}$ by the following composition: \begin{equation}\begin{split}
    \delta_{h}\otimes v^{\prime} \overset{\res}{\map} & \, \delta_{P}(z)\underset{x\in N_{0}/zN_{0}z^{-1}}{\Sigma} \delta_{x^{-1}h}\otimes x^{-1}v^{\prime}  \overset{Prop. \ref{prop:Hecke_isom}}{\longrightarrow} \delta_{P}(z)\underset{x\in N_{0}/zN_{0}z^{-1}}{\Sigma} \delta_{\lambda(x^{-1}h)}\otimes x^{-1}v^{\prime} \\
    \overset{z}{\ra} & \,\delta_{P}(z)\underset{x\in N_{0}/zN_{0}z^{-1}}{\Sigma} \delta_{z^{-1}\lambda(x^{-1}h)z}\otimes z^{-1}x^{-1}v^{\prime}.
\end{split}\end{equation} 
It now suffices to check that the following diagram commutes: \begin{equation}\label{diag: Hecke_compatibility2}
\begin{tikzcd}[column sep=large]
    K[H_{0}] \otimes_{K} V^{\prime}_{b} \arrow[r,"h\otimes v^{\prime}\map hv^{\prime}"] \arrow[d,"\pi_{N_{0},0,z}^{\prime}"] & (V^{\prime}_{b})_{N_{0}} \arrow[r,"\cong"] & (V^{N_{0}})^{\prime}_{b} \arrow[d,"\pi_{N_{0},z}^{\prime}"]  \\
    K[H_{0}] \otimes_{K} V^{\prime}_{b} \arrow[r,"h\otimes v^{\prime}\map hv^{\prime}"] & (V^{\prime}_{b})_{N_{0}} \arrow[r,"\cong"] & (V^{N_{0}})^{\prime}_{b}
\end{tikzcd}.
\end{equation} 

Let $\left<\cdot,\cdot\right>$ denote the duality pairing between $V$ and $V^{\prime}_{b}$. The goal is to prove that there is an equality \[
    \left<\delta_{P}(z)\underset{x\in N_{0}/zN_{0}z^{-1}}{\Sigma} \delta_{z^{-1}\lambda(x^{-1}h)z}\otimes z^{-1}x^{-1}v^{\prime},v\right>=\left<h^{-1}v^{\prime},\pi_{N_{0},z}(v)\right>
\] for all $h\in H_{0}$, $v\in V^{N_{0}}$ and $v^{\prime}\in V^{\prime}_{b}$. A simple computation shows that \begin{equation}
\begin{split}
    \left\langle \delta_{z^{-1}\lambda\left(x^{-1}h\right)z}\otimes z^{-1}x^{-1}v^{\prime},v\right\rangle &= \left\langle z^{-1}\lambda(x^{-1}h)^{-1}zz^{-1}x^{-1}v^{\prime},v\right\rangle \\
	&=	\left\langle z^{-1}\lambda(x^{-1}h)^{-1}x^{-1}v^{\prime},v\right\rangle \\ 
	&=	\left\langle v^{\prime},x\lambda(x^{-1}h)zv\right\rangle \\
	&=	\left\langle h^{-1}v^{\prime},h^{-1}x\lambda(x^{-1}h)zv\right\rangle .
\end{split}
\end{equation} The following lemma then finishes the proof. 

\begin{lemma}
As $x$ ranges over the right coset representatives of $zN_{0}z^{-1}$ in $N_{0}$, $h^{-1}x\lambda(x^{-1}h)$ ranges through a set of right coset representatives of $zN_{0}z^{-1}$ in $N_{0}$ as well.
\end{lemma}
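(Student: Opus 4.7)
The plan is first to simplify $h^{-1}x\lambda(x^{-1}h)$ algebraically and then exhibit the resulting map as a bijection of $N_{0}$ descending to a bijection of coset spaces. Using the defining identity $x^{-1}h=\lambda(x^{-1}h)\,\rho(x^{-1}h)$ one immediately computes
\[
    h^{-1}x\lambda(x^{-1}h)\;=\;h^{-1}x\cdot(x^{-1}h)\cdot\rho(x^{-1}h)^{-1}\;=\;\rho(x^{-1}h)^{-1}\in N_{0},
\]
so the map in question is $\phi(x):=\rho(x^{-1}h)^{-1}$. That $\phi\colon N_{0}\to N_{0}$ is a bijection follows from the opposite Iwahori decomposition $H_{0}=N_{0}\overline{P}_{0}$: given $n\in N_{0}$, write $hn^{-1}=n^{\ast}\bar{p}^{\ast}$ with $n^{\ast}\in N_{0}$, $\bar{p}^{\ast}\in\overline{P}_{0}$, and verify that $\phi(n^{\ast})=n$.

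Next I would check that $\phi$ descends to a well-defined map on $N_{0}/zN_{0}z^{-1}$; combined with the bijectivity of $\phi$ on $N_{0}$, this automatically forces a bijection on the finite coset space. Given $y\in zN_{0}z^{-1}$ and $x':=xy$, we have $x'^{-1}h=y^{-1}\lambda(x^{-1}h)\,\rho(x^{-1}h)$, so everything comes down to identifying the $N_{0}$-part of $y^{-1}\lambda(x^{-1}h)$. Writing $\lambda(x^{-1}h)=\bar{n}m$ with $\bar{n}\in\overline{N}_{0}$ and $m\in M_{0}$, and Iwahori-decomposing $y^{-1}\bar{n}=\bar{n}'m'n'$ with $\bar{n}'\in\overline{N}_{0}$, $m'\in M_{0}$, $n'\in N_{0}$, the fact that $M_{0}$ normalises $N_{0}$ (a consequence of $[\M,\N]\subseteq\N$ via BCH) yields
\[
    y^{-1}\lambda(x^{-1}h)\;=\;\bar{n}'(m'm)(m^{-1}n'm),
\]
whose right-hand side is already in Iwahori form. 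Hence $\phi(x)^{-1}\phi(x')=(m^{-1}n'm)^{-1}$; since $m$ commutes with $z\in Z_{M}$ and normalises $N_{0}$, it also normalises $zN_{0}z^{-1}$, so the problem reduces to showing $n'\in zN_{0}z^{-1}$.

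I expect this last claim to be the main obstacle. To prove it I would conjugate the identity $y^{-1}\bar{n}=\bar{n}'m'n'$ by $z$. Noting that $z^{-1}yz\in N_{0}$ (tautologically, since $y\in zN_{0}z^{-1}$), $z^{-1}\bar{n}z,\,z^{-1}\bar{n}'z\in\overline{N}_{0}$ (by condition (7) of Proposition~\ref{prop:good_grps}), and $z^{-1}m'z=m'$ (centrality), we obtain
\[
    (z^{-1}y^{-1}z)(z^{-1}\bar{n}z)\;=\;(z^{-1}\bar{n}'z)\,m'\,(z^{-1}n'z).
\]
The left-hand side lies in $N_{0}\overline{N}_{0}\subseteq H_{0}$ and the prefix $(z^{-1}\bar{n}'z)m'$ on the right lies in $\overline{P}_{0}\subseteq H_{0}$, whence $z^{-1}n'z\in H_{0}$; since $z$ normalises $N$, this element also lies in $N$, and therefore in $H_{0}\cap N=N_{0}$, giving $n'\in zN_{0}z^{-1}$ as required.
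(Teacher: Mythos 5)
Your argument is correct in substance and follows essentially the same line as the paper: reduce $h^{-1}x\lambda(x^{-1}h)$ to $\rho(x^{-1}h)^{-1}\in N_{0}$, then use an Iwahori decomposition to control the $N$-component of products of the form $\overline{P}_{0}\cdot(zN_{0}z^{-1})\cdot\overline{P}_{0}$, so that the map descends to the finite coset space. Two small remarks.

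First, a sign slip: from $hn^{-1}=n^{\ast}\bar{p}^{\ast}$ you get $(n^{\ast})^{-1}h=\bar{p}^{\ast}n$, hence $\rho((n^{\ast})^{-1}h)=n$ and $\phi(n^{\ast})=n^{-1}$, not $n$. Either conclude with $n^{-1}$ (surjectivity still follows) or start from $hn=n^{\ast}\bar{p}^{\ast}$.

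Second, the organization differs slightly from the paper's. You prove $\phi$ is a bijection on all of $N_{0}$, check that it descends, and use finiteness; the paper skips surjectivity of $\phi$ entirely, instead proving directly that $\rho(y^{-1}h)\rho(x^{-1}h)^{-1}\in zN_{0}z^{-1}$ holds \emph{if and only if} $y^{-1}x\in zN_{0}z^{-1}$, via the clean containment $p(zN_{0}z^{-1})p'\cap N_{0}\subseteq zN_{0}z^{-1}$ for $p,p'\in\overline{P}_{0}$, which is read off from the Iwahori decompositions of $H_{0}$ and $H(z^{-1})_{0}$. Your conjugation-by-$z$ argument showing $n'\in zN_{0}z^{-1}$ is a hands-on version of exactly that containment (transported into $H_{0}$ rather than read off from $H(z^{-1})_{0}$), and the extra step of splitting $\lambda(x^{-1}h)=\bar{n}m$ is bookkeeping that the paper avoids by stating the containment for general $p,p'\in\overline{P}_{0}$. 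Both routes are valid and comparable in length; the paper's packaging is a bit more economical.
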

\begin{proof}
By definition, $h=\lambda(h)\rho(h)$ for all $h\in H_{0}$. In particular, \[        
    h^{-1}x\lambda\left(x^{-1}h\right)=\rho\left(x^{-1}h\right)^{-1}
\] is indeed an element of $N_{0}$. Additionally, given $x,y\in N_{0}$, there is an identity \begin{equation}\label{eqn:cosets_N_zNz^{-1}}
    \rho\left(y^{-1}h\right)\rho\left(x^{-1}h\right)^{-1}=\lambda\left(y^{-1}h\right)^{-1}y^{-1}x\lambda\left(x^{-1}h\right).
\end{equation}
\begin{claim}
For any $p,p^{\prime}\in\overline{P}_{0}$, there is an inclusion $p\left(zN_{0}z^{-1}\right)p^{\prime}\cap N_{0}\sub zN_{0}z^{-1}$.
\end{claim}
\begin{proof}
This follows from the fact that $p\left(zN_{0}z^{-1}\right)p^{\prime}$ is element of $H_{0}$ and $H(z^{-1})_{0}$. From the Iwahori decomposition of these two groups, $H_{0}\cap H(z^{-1})_{0}\cap N_{0}=zN_{0}z^{-1}$.
\end{proof}

Looking back at equation (\ref{eqn:cosets_N_zNz^{-1}}), we see that if $y^{-1}x\in zN_{0}z^{-1}$ then the claim shows that $\rho\left(y^{-1}h\right)\rho\left(x^{-1}h\right)^{-1}\in zN_{0}z^{-1}$. Conversely, if $\rho\left(y^{-1}h\right)\rho\left(x^{-1}h\right)^{-1}\in zN_{0}z^{-1}$ then \[
    y^{-1}x\in\lambda\left(y^{-1}h\right)\left(zN_{0}z^{-1}\right)\lambda\left(x^{-1}h\right)^{-1}\sub zN_{0}z^{-1}
\] as well.
\end{proof}

\subsection{Definition of $H^{\star}J_{P}(V)$}\label{sec:def}

\begin{lemma}\label{lem:compact_quotient}
Suppose $f:\,V\ra W$ is a compact continuous linear map between Hausdorff locally convex $K$-vector spaces (in the sense of \cite[\textsection 16]{Schneider_nfs}). Suppose there is a commutative diagram of Hausdorff locally convex $K$-vector spaces \begin{equation*}
    \begin{tikzcd}
	V \arrow[r,"f"] \arrow[d,"p"] & W \arrow[d,"q"] \\
	Y \arrow[r,"\bar{f}"] & Z
    \end{tikzcd},
\end{equation*} where $p$ is a quotient map. Then $\overline{f}$ is compact.
\end{lemma}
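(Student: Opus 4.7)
The plan is to unwind the definition of compactness from \cite[\textsection 16]{Schneider_nfs} and push a witnessing neighborhood through the quotient map. Recall that $f$ compact means there exists an open neighborhood $U$ of $0$ in $V$ such that $f(U)$ is bounded and c-compact (equivalently, compactoid) in $W$. The goal is to produce such a neighborhood in $Y$ for $\bar{f}$.

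First I would take such a $U \subseteq V$ witnessing compactness of $f$. Since $p : V \to Y$ is a quotient map (hence open), the image $p(U)$ is an open neighborhood of $0$ in $Y$, and this is our candidate witness. Next, commutativity of the diagram gives the set-theoretic identity
\[
    \bar{f}(p(U)) = q(f(U)).
\]
So it remains to verify that $q(f(U))$ is bounded and c-compact in $Z$.

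The key observation for this last step is that both properties are preserved by arbitrary continuous $K$-linear maps: images of bounded sets are bounded, and images of c-compact (compactoid) sets are again c-compact (see \cite[\textsection 12, \textsection 16]{Schneider_nfs}). Since $f(U)$ is bounded and c-compact in $W$ by choice of $U$, and $q$ is continuous and $K$-linear, $q(f(U))$ inherits both properties in $Z$. Therefore $p(U)$ witnesses that $\bar{f}$ is compact, completing the proof.

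There is no substantive obstacle here; the whole content of the lemma is that compactness factors through quotients, which reduces to the fact that $p$ being a quotient map turns $U$ into an open neighborhood of $0$ downstairs, while $q$ being merely continuous and linear is enough to transport the compactoid/boundedness witness. If one preferred an argument phrased in terms of relative compactness rather than c-compactness, the same strategy works verbatim, using that continuous linear maps send relatively compact sets to relatively compact sets.
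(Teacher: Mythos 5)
Your overall strategy — push an open witnessing lattice through the open quotient map $p$ and transport the compactoid/boundedness data through the continuous linear map $q$ — is exactly the paper's. However, there is a real imprecision in your statement of the definition that leaves a gap in the argument. In Schneider's \S 16, $f$ is compact if there is an open lattice $L\subseteq V$ such that the \emph{closure} $\overline{f(L)}$ is bounded and c-compact; $f(L)$ itself need not be c-compact, and c-compactness is not in general inherited by arbitrary subsets. Consequently what you must verify for $\bar f$ is that $\overline{\bar{f}(p(L))}$ is bounded and c-compact, while you only address $\bar{f}(p(L))=q(f(L))$.

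The fix is short and is exactly what the paper does. Apply $q$ to $\overline{f(L)}$ rather than to $f(L)$: since $\overline{f(L)}$ is c-compact, its continuous linear image $q(\overline{f(L)})$ is again c-compact, hence complete and therefore closed in the Hausdorff space $Z$ (this is the role of Lemma 12.1, Lemma 12.4 and Proposition 12.7 of Schneider). Then, by continuity of $q$ one has $\overline{q(f(L))}=\overline{q(\overline{f(L)})}$, and by closedness this equals $q(\overline{f(L)})$, so $\overline{\bar{f}(p(L))}=q(\overline{f(L)})$ is bounded and c-compact. With this adjustment your proof coincides with the one in the paper. Your closing remark that the same works ``verbatim'' with relative compactness also needs the same care: relative compactness is a statement about closures, so you again need to know that the image of the closure is itself closed before identifying it with the closure of the image.
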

\begin{proof}
Let $L$ be an open lattice of $V$ such that $\overline{f(L)}$ is bounded and c-compact. By Lemma 12.1, Lemma 12.4 and Proposition 12.7 of \cite{Schneider_nfs}, the image $q\left(\overline{f(L)}\right)$ is closed, bounded and c-compact. Therefore, $p(L)$ is an open lattice in $Y$ such that $\overline{\overline{f}\left(p(L)\right)} = \overline{q\left(\overline{f(L)}\right)}=q\left(\overline{f(L)}\right)$ is bounded and c-compact, and so $\overline{f}$ is compact.
\end{proof}

By the choice of good analytic open subgroups $\{H_{n}\}_{n\geq0}$ of $G$, each of the rigid analytic inclusion $\h_{n+1}\sub \h_{n}$ is relatively compact. Therefore, the natural map $D(H_{0},\h_{n+1}^{\circ})\ra \dn$ is compact. Lemma \ref{lem:compact_quotient} and \cite[Rem. 16.7]{Schneider_nfs} show that the transition maps $\widehat{H}_{\star}(N_{0},(V_{\h_{n+}^{\circ}\An})^{\prime}_{b}) \ra \widehat{H}_{\star}(N_{0},(V_{\h_{n}^{\circ}\An})^{\prime}_{b})$ defined via proposition \ref{prop:H(V_L)->H(V_H)} are also compact. Hence, $\underset{\underset{n}{\la}}{\lim}\widehat{H}_{\star}(N_{0},(V_{\h_{n}^{\circ}\An})^{\prime}_{b})$ is a nuclear Fr\'echet space. It also comes equipped with an action of $Z^{+}$ (defined in \textsection \ref{sec:Hecke}) induced from the operators $\pi_{N_{0},n,z}^{\prime}$ for each $z\in Z_{M}^{+}$ and $n$ large enough. Lemma 3.2.29 of \cite{Jacquet1} and the discussion following the lemma show that the strong dual of $\underset{\underset{n}{\la}}{\lim}\widehat{H}_{\star}(N_{0},(V_{\h_{n}^{\circ}\An})^{\prime}_{b})$ is an object of $\rep_{\textnormal{la,c}}^{z}(Z^{+})$. 

Motivated by equation (\ref{eqn:fs_dual}) and \cite[Lem. 3.2.19]{Jacquet1}, for an admissible locally analytic $G$-representation $V\in\rep_{\textnormal{ad}}(G)$ we make the following definition. 
\begin{definition}
We define the derived Jacquet-Emerton module $H^{\star}J_{P}(V)$ to be the strong dual of \begin{equation}\label{eq:def_of_DJP}
    \left(H^{\star}J_{P}(V)\right)^{\prime}_{b} := C^{\an}(\widehat{Z}_{M},K)\underset{K[Z^{+}]}{\widehat{\otimes}} \underset{\underset{n}{\la}}{\lim}\widehat{H}_{\star}(N_{0},(V_{\h_{n}^{\circ}\An})^{\prime}_{b}). 
\end{equation}
\end{definition} 

For this definition to be reasonable, we need to show that the right hand side of (\ref{eq:def_of_DJP}) is reflexive. This is true, as we show in Section \ref{sec:properties} that $H^{\star}J_P$ is a family of functors from $\rep_{\textnormal{ad}}(G)$ to $\rep_{\textnormal{es}}(M)$. By \cite[Prop. 1.1.32]{analytic} there is an isomorphism \begin{equation*}
    H^{\star}J_{P}(V) \cong \left(\underset{\underset{n}{\ra}}{\lim}H^{\star}(N_{0},V_{\h_{n}^{\circ}\An}) \right)_{\textnormal{fs}}.
\end{equation*} In degree zero, we observe that \begin{equation*}
    \left(H^{0}J_{P}(V)\right)^{\prime}_{b} \cong C^{\an}(\widehat{Z}_{M},K)\underset{K[Z^{+}]}{\widehat{\otimes}} \underset{\underset{n}{\la}}{\lim}((V_{\h_{n}^{\circ}\An})^{\prime}_{b})_{N_{0}} \cong C^{\an}(\widehat{Z}_{M},K)\underset{K[Z^{+}]}{\widehat{\otimes}} (V^{\prime}_{b})_{N_{0}}.
\end{equation*} Therefore, there is an isomorphism $H^{0}J_{P}(V)\cong (V^{N_{0}})_{\textnormal{fs}} \cong J_{P}(V)$. 

Let $u\in Z^{+}$ be an element satisfying the properties that $u^{-1}\overline{N}_{n}u\sub \overline{N}_{n+1}$ for all $n\geq 0$, and that $u^{-1}$ and $Z^{+}$ generates $Z_{M}$. Define $Y^{+}$ (resp. $Y$) to be the submonoid (resp. subgroup) of $Z_{M}$ generated by $u$. Fix an exhaustive increasing sequence $\{\widehat{Y}_{n}\}_{n\geq0}$ of admissible open affinoid subdomains of $\widehat{Y}_{M}$ with the property that the inclusions $Y_{n}\sub Y_{n+1}$ are relatively compact. Let $C^{\an}(\widehat{Y}_{n},K)^{\dagger}$ denote the space of overconvergent rigid analytic functions on $\widehat{Y}_{n}$. Then there is a topological isomorphism \begin{equation}\label{eqn:C(Z)=limC(Zn)}
    C^{\an}(\widehat{Y},K) \cong \underset{\underset{n}{\la}}{\lim}C^{\an}(\widehat{Y}_{n},K)^{\dagger}.
\end{equation} Proposition 3.2.28 of \cite{Jacquet1}, Proposition 1.1.29 of \cite{analytic} and Lemma 19.10 of \cite{Schneider_nfs} show that there is an isomorphism \begin{equation}\label{eqn:HJp(V)_H}
    \left(H^{\star}J_{P}(V)\right)^{\prime}_{b} := \underset{\underset{n}{\la}}{\lim} C^{\an}(\widehat{Y}_{n},K)^{\dagger}\underset{K[u]}{\widehat{\otimes}} H_{\star}(N_{0},(V_{\h_{n}^{\circ}\An})^{\prime}_{b}),
\end{equation} where the limit is over all $n\geq 0$ large enough so that $\h_{n}\sub \h(z)_{0}$ and $\h_{n}\sub \h(z^{-1})_{0}$. 

\begin{proposition}
Suppose $V\in \rep_{\textnormal{ad}}(G)$. Then the assignment $V\map H^{\star}J_{P}(V)$ is a family of functors from $\rep_{\textnormal{ad}}(G)$ to $\rep_{\textnormal{la,c}}^{z}(M)$.
\end{proposition}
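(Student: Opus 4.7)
The proof needs to establish two things: (i) the assignment $V \mapsto H^{\star}J_P(V)$ is genuinely functorial, with each morphism $V \to W$ in $\rep_{\textnormal{ad}}(G)$ producing a morphism in $\rep_{\textnormal{la,c}}^z(M)$; and (ii) the target space $H^{\star}J_P(V)$ really does lie in $\rep_{\textnormal{la,c}}^z(M)$. The construction of $H^{\star}J_P(V)$ involves several stages — passing to the dual module, tensoring up to $D(H_0,\h_n^\circ)$, choosing a finite free resolution, taking $N_0$-coinvariants, passing to Hausdorff completion, taking the inverse limit over $n$, applying $C^{\an}(\widehat{Z}_M,K)\widehat{\otimes}_{K[Z^+]}(-)$, and dualizing — so the plan is to check that each stage behaves well under morphisms and that the final object sits in the desired subcategory.

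First I would establish functoriality stage by stage. Given a continuous $G$-equivariant map $V \to W$, the dual map $W_b^{\prime} \to V_b^{\prime}$ is a strict continuous $D^{\La}(H_0,K)$-linear morphism. Tensoring with $D(H_0,\h_n^\circ)$ (using the isomorphism (\ref{eqn:D(Hn)V})) gives a $D(H_0,\h_n^\circ)$-linear morphism $(W_{\h_n^\circ\An})_b^{\prime} \to (V_{\h_n^\circ\An})_b^{\prime}$ between finitely presented modules, which is then automatically continuous and strict by \cite[Prop. A.10]{Jacquet2}. The standard comparison lemma in homological algebra lifts this to a map between finite free resolutions in $Coh_{D(H_0,\h_n^\circ)}$, well-defined up to chain homotopy; applying $(-)_{N_0}$ then homology gives a functorial map on $H_\star(N_0, (V_{\h_n^\circ\An})_b^{\prime})$. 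Compatibility with the Hecke action of $Z^+$ is a purely formal consequence of the construction in \textsection\ref{sec:Hecke}, since the restriction map, the isomorphism of Proposition \ref{prop:Hecke_isom}, the conjugation isomorphism, and the transition map are all natural in the underlying module. Then Hausdorff completion, the inverse limit over $n$, the completed tensor product with $C^{\an}(\widehat{Z}_M,K)$ over $K[Z^+]$, and taking the strong dual are all functorial operations, yielding morphisms in the category of topological $M$-representations.

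Next I would verify that $H^{\star}J_P(V)$ actually lies in $\rep_{\textnormal{la,c}}^z(M)$. The inverse limit $\underset{\underset{n}{\la}}{\lim}\widehat{H}_\star(N_0,(V_{\h_n^\circ\An})_b^{\prime})$ is a nuclear Fréchet space, as already noted in \textsection\ref{sec:def} using Lemma \ref{lem:compact_quotient} and the relative compactness of $\h_{n+1}\subset\h_n$; its strong dual is therefore of compact type. By the discussion following \cite[Lem. 3.2.29]{Jacquet1}, this strong dual is an object of $\rep_{\textnormal{la,c}}^z(Z^+)$. Applying the finite-slope-part machinery of \cite{Jacquet1}, equivalently forming the completed tensor product $C^{\an}(\widehat{Z}_M,K)\widehat{\otimes}_{K[Z^+]}(-)$ on the dual side, promotes the $Z^+$-action to a locally analytic $Z_M$-action, and the resulting space is the dual of $(\ref{eqn:HJp(V)_H})$, which is a nuclear Fréchet space (inverse limit of the overconvergent models). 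The commuting $D(M_0,\m_n^\circ)$-module structure on each $H_\star(N_0,(V_{\h_n^\circ\An})_b^{\prime})$ assembles after taking inverse limit to an action of $D^{\La}(M_0,K)$, which dualizes to a locally analytic $M_0$-action; combined with the $Z_M$-action obtained from the finite-slope construction, this gives a locally analytic $M$-action because $M$ is generated by $M_0$ and $Z_M$ up to the usual Iwahori-type decomposition used in \cite[\textsection 3.2, \textsection 4.2]{Jacquet1}. Finally, I would check the condition of \cite[Prop. 6.4.7]{analytic} directly from the explicit description (\ref{eqn:HJp(V)_H}), reading off the required weak Fréchet-Stein structure on the dual side.

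The main obstacle I anticipate is the final verification that the two actions — the $D^{\La}(M_0,K)$-action coming from the distribution algebra level and the $Z_M$-action arising from the finite-slope functor — are compatible and together constitute a genuinely locally analytic $M$-representation satisfying the conditions of \cite[Prop. 6.4.7]{analytic}. This requires a careful compatibility check at the level of the dual module (\ref{eqn:HJp(V)_H}) between the $D(M_0,\m_n^\circ)$-module structure, the Hecke operators $\pi_{N_0,n,z}^{\prime}$ as $z$ ranges in $Z^+$, and the overconvergent rigid-analytic model of $\widehat{Z}_M$, essentially repeating the argument of \cite[Thm. 3.5.6]{Jacquet1} for $J_P$ but now with higher homology groups in place of $N_0$-invariants. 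The key new input beyond Emerton's original argument is that Lemma \ref{lem:Hd_coinv} ensures the coinvariants already coincide with Hausdorff coinvariants, so no topological pathologies arise in the derived setting.
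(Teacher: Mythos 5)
Your functoriality argument (dualizing, tensoring, lifting along free resolutions, passing to homology, then limits and finite slope parts) matches the paper's first paragraph and is fine. The gap is in how you produce the $M$-action. You assert that "$M$ is generated by $M_0$ and $Z_M$" and that this, combined with the $D^{\La}(M_0,K)$-module structure and the $Z_M$-action from the finite slope functor, yields an $M$-action. This is false in general: $M_0 Z_M$ is a proper subset of $M$ whenever $M$ is not a torus. For example, with $G=GL_3$ and $M=GL_2\times GL_1$, an element of $GL_2(\qp)$ with distinct elementary divisors does not lie in $GL_2(\zp)\cdot\qp^\times$. The correct generating set is $M^{+}$ together with $Z_M$ (once $Z_M^{+}$ has been inverted via the finite slope part), and this is exactly what \cite[Prop.~3.3.2, Prop.~3.3.6]{Jacquet1} provide.

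Concretely, the paper constructs an operator $\psi_{n,m}$ for \emph{every} $m\in M^{+}$ — not just for $m\in M_0$ or $m\in Z_M$ — by the same restriction/conjugation mechanism used for the Hecke action, now using the auxiliary rigid analytic groups $\h(m^{-1})_n$ and $\h(m)_n$ attached to $\Ad_{m^{-1}}(\overline{\N}_n\oplus\M_n)\oplus\N_n$ and $\overline{\N}_n\oplus\M_n\oplus\Ad_m(\N_n)$, and with an appeal to Corollary~\ref{cor:same_inv_limit} to realign the two families of good analytic subgroups after conjugation. This $M^{+}$-action is the technical heart of the second half of the proof and is entirely missing from your proposal; only with it in hand can one verify that the $Z^{+}$-action (used for the finite slope part) and the $M^{+}$-action agree on the overlap, check local analyticity of the $M_0$-action via \cite[Prop.~2.2]{summary}, and then invoke \cite[Lem.~3.2.7]{Jacquet1} to conclude that the $M$-action is locally analytic. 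You should supply the construction of $\psi_{n,m}$ for general $m\in M^{+}$ before the argument can close.
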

\begin{proof}
Suppose $f:\,V\ra W$ is a continuous map between admissible locally analytic $G$-representations. Then for each $n\geq 0$, there is a continuous map between finitely presented topological $\dn$-modules \begin{equation*}
    f_{n}^{\prime}:\, (W_{\h_{n}^{\circ}\An})_{b}^{\prime} \ra \vn. 
\end{equation*} Passing to homology, there are continuous $D(M_{0},\m_{n}^{\circ})$-linear maps \begin{equation*}
    (f_{n}^{\prime})_{\star}:\, H_{\star}(N_{0},(W_{\h_{n}^{\circ}\An}))_{b}^{\prime} \ra H_{\star}(N_{0},\vn) 
\end{equation*} which can easily be checked to be $u$-equivariant. Since the action of $u$ commutes with $D(M_{0},\m_{n}^{\circ})$, after taking limits and $Y$-finite slope parts, there are continuous  $D^{\La}(M_{0},K)$-linear maps $(H^{\star}J_{P}(W))^{\prime}_{b}\ra (H^{\star}J_{P}(V))^{\prime}_{b}$. This proves that $H^{\star}J_{P}(V)$ is a family of functors.

Suppose $V\in \rep_{\textnormal{ad}}(G)$, then $H^{\star}J_{P}(V)\in \rep_{\textnormal{la,c}}^{z}(Z_{M})$ \cite[Prop. 3.2.4]{Jacquet1}. Fix $m\in M^{+}$. For each $n\geq 0$, let $\h(m^{-1})_{n}$ (resp. $\h(m)_{n}$) be the good analytic open subgroup of $G$ associated to the $\zp$-lattice $\overline{\N}_{n}\oplus \M_{n} \oplus \Ad_{m}(\N_{n})$ (resp. $\Ad_{m^{-1}}(\overline{\N}_{n}\oplus \M_{n}) \oplus \N_{n}$). For each $n$ large enough so that $H_{n}\sub H(m^{-1})_{0}$ and $H_{n}\sub H(m)_{0}$. Define $\psi_{n,m}$ to be the composition \begin{equation*}\begin{split}
    \psi_{n,m}: &\, \widehat{H}_{\star}(N_{0},(V_{\h_{n}^{\circ}\An})^{\prime}_{b}) \overset{\textnormal{res, }(\ref{eqn:res})}{\longrightarrow} \widehat{H}_{\star}(mN_{0}m^{-1},(V_{\h_{n}^{\circ}\An})^{\prime}_{b})  \\ 
    \overset{\textnormal{Prop. } \ref{prop:H(V_L)->H(V_H)}}{\cong} &  \widehat{H}_{\star}(mN_{0}m^{-1},(V_{\h(m^{-1})_{n}^{\circ}\An})^{\prime}_{b})\overset{m^{-1}}{\cong} \widehat{H}_{\star}(N_{0},(V_{\h(m)_{n}^{\circ}\An})^{\prime}_{b}).
\end{split} 
\end{equation*} By taking projective limit and composing with the isomorphism coming from Corollary \ref{cor:same_inv_limit} as well as the multiplication by $\delta_{P}(m)$-endomorphism, there is a map \begin{equation*}
    \psi_{z}:\, \underset{\underset{n}{\la}}{\lim} \widehat{H}_{\star}(N_{0},(V_{\h_{n}^{\circ}\An})^{\prime}_{b}) \ra \underset{\underset{n}{\la}}{\lim} \widehat{H}_{\star}(N_{0},(V_{\h_{n}^{\circ}\An})^{\prime}_{b}).
\end{equation*} We define this to be the action of $M^{+}$ on $\underset{\underset{n}{\la}}{\lim} \widehat{H}_{\star}(N_{0},(V_{\h_{n}^{\circ}\An})^{\prime}_{b})$. 

Notice that if $z\in Z^{+}$, then the Hecke action $\pi_{N_{0},n,z}^{\prime}$ is exactly the composition of $\psi_{n,z}$ with the map $\widehat{H}_{\star}(N_{0},(V_{\h(z)_{n}^{\circ}\An})^{\prime}_{b})\ra \widehat{H}_{\star}(N_{0},(V_{\h_{n}^{\circ}\An})^{\prime}_{b})$ and the multiplication by $\delta_{P}(z)$-endomorphism. Thus, the action of $z$ on $\underset{\underset{n}{\la}}{\lim} \widehat{H}_{\star}(N_{0},(V_{\h_{n}^{\circ}\An})^{\prime}_{b})$ coming from $\pi_{N_{0},z}^{\prime}$ and $\psi_{z}$ agree. Proposition 3.3.2 and Proposition 3.3.6 of \cite{Jacquet1} imply that this defines an action of $M$ on $H^{\star}J_{P}(V)$. 

It remains to show that the action of $M$ on $H^{\star}J_{P}(V)$ is locally analytic. Since $M_{0}$ is a compact open subgroup of $M$, it is equivalent to showing that the action of $M_{0}$ is locally analytic. Since $M_{0}$ normalizes $N_{0}$ and $\overline{N}_{0}$, it is easy to see that for all $m\in M_{0}$, the action of $m$ given by $\psi_{m}$ is simply the multiplication by $m$ map on $\underset{\underset{n}{\ra}}{\lim} \widehat{H}^{\star}(N_{0},V_{\h_{n}^{\circ}\An})$ and the multiplication by the dirac distribution $\delta_{m}$ on $\underset{\underset{n}{\la}}{\lim} \widehat{H}_{\star}(N_{0},(V_{\h_{n}^{\circ}\An})^{\prime}_{b})$. Since $\underset{\underset{n}{\la}}{\lim} \widehat{H}_{\star}(N_{0},(V_{\h_{n}^{\circ}\An})^{\prime}_{b})$ is a topological $D^{\La}(M_{0},K)$-module, the action of $M_{0}$ on $\underset{\underset{n}{\ra}}{\lim} \widehat{H}^{\star}(N_{0},V_{\h_{n}^{\circ}\An})$ is locally analytic \cite[Prop. 2.2]{summary}. Since the Hecke action of $Z^{+}$ is $M$-linear, it follows from \cite[Lem. 3.2.7]{Jacquet1} that the action of $M$ on $H^{\star}J_{P}(V)$ is also locally analytic. 
\end{proof}

The rest of the section is dedicated to showing that the definition of the functor $H^{\star}J_{P}$ is invariant of the various choices we have made in its construction. A number of the arguments are similar to those used in \cite{Jacquet1}.

Suppose $M^{\prime}$ is another choice of lift of the Levi quotient of $P$. Then there is some $x\in N$ such that $M^{\prime}=xMx^{-1}$. For all $n\geq0$, let $\h_{n}^{\prime}=x\h_{n}x^{-1}$ $\m_{n}^{\prime}=x\m_{n}x^{-1}$, $\n_{n}^{\prime}=x\m_{n}x^{-1}$, $H_{n}^{\prime}=\h_{n}^{\prime}(\qp)$, $M_{n}^{\prime}=\m_{n}^{\prime}(\qp)$ and $N_{n}^{\prime}=\n_{n}^{\prime}(\qp)$. Let $(M^{+})^{\prime}=xM^{+}x^{-1}$, then \begin{equation*}
    (M^{+})^{\prime} = \{ m^{\prime}\in M^{\prime}:\, m^{\prime}N_{0}^{\prime}(m^{\prime})^{-1}\sub N_{0}^{\prime}\}. 
\end{equation*} Given $V\in \rep_{\textnormal{ad}}(G)$, we can define $\underset{\underset{n}{\la}}{\lim} \widehat{H}_{\star}(N^{\prime}_{0},(V_{\h_{n}^{\prime,\circ}\An})^{\prime}_{b})$ and an action of $(M^{+})^{\prime}$ on the space in an analogous way. 

\begin{proposition}
Suppose $V\in\rep_{\textnormal{ad}}(G)$. Multiplication by $x$ induces an isomorphism \begin{equation}
    \left(\underset{\underset{n}{\ra}}{\lim}H^{\star}(N_{0},(V_{\h_{n}^{\circ}\An})^{\prime}_{b}) \right)_{\textnormal{fs}} \cong \left(\underset{\underset{n}{\ra}}{\lim}H^{\star}(N_{0}^{\prime},(V_{\h_{n}^{\prime,\circ}\An})^{\prime}_{b}) \right)_{\textnormal{fs}}
\end{equation} in the category $\rep_{\textnormal{la,c}}^{z}(M)$ (where $M^{\prime}$ is identified with $M$ via conjugation by $x$). 
\end{proposition}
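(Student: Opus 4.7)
The plan is to construct the desired isomorphism by transporting structure along the inner automorphism $\Ad(x)$ of $G$. Since $x\in N$, this inner automorphism sends $M$, $N_{0}$, $H_{n}$, $\h_{n}^{\circ}$ to $M^{\prime}$, $N_{0}^{\prime}=xN_{0}x^{-1}$, $H_{n}^{\prime}$, $\h_{n}^{\prime,\circ}$, respectively, and in particular intertwines the pair of distribution algebras $D(H_{0},\h_{n}^{\circ})$ and $D(H_{0}^{\prime},\h_{n}^{\prime,\circ})$ via $\delta_{h}\mapsto\delta_{xhx^{-1}}$. The map on derived Jacquet-Emerton modules will then be defined by left multiplication by $x$ on $V$; almost all of the work is to verify compatibility with the various structures used in the construction of $H^{\star}J_{P}(V)$.

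First I would observe that, for $v\in V$, the orbit-map computation $h^{\prime}(xv)=x(x^{-1}h^{\prime}x)v$ shows that left multiplication by $x$ restricts to a $K$-linear isomorphism $V_{\h_{n}^{\circ}\An}\isom V_{\h_{n}^{\prime,\circ}\An}$. Dualizing gives a topological isomorphism $(V_{\h_{n}^{\prime,\circ}\An})_{b}^{\prime}\isom (V_{\h_{n}^{\circ}\An})_{b}^{\prime}$ that is semilinear with respect to the algebra isomorphism $D(H_{0}^{\prime},\h_{n}^{\prime,\circ})\isom D(H_{0},\h_{n}^{\circ})$ above. Consequently, any finite free resolution of $(V_{\h_{n}^{\circ}\An})_{b}^{\prime}$ over $D(H_{0},\h_{n}^{\circ})$ is transported to a finite free resolution of $(V_{\h_{n}^{\prime,\circ}\An})_{b}^{\prime}$ over $D(H_{0}^{\prime},\h_{n}^{\prime,\circ})$; since $N_{0}^{\prime}=xN_{0}x^{-1}$, the induced map on $N_{0}$- versus $N_{0}^{\prime}$-coinvariant complexes is an isomorphism of complexes of compact-type $K$-modules. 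After taking Hausdorff completions, strong duals and the direct limit, this produces a $K$-linear isomorphism between the two systems $\underset{\underset{n}{\ra}}{\lim}H^{\star}(N_{0},V_{\h_{n}^{\circ}\An})$ and $\underset{\underset{n}{\ra}}{\lim}H^{\star}(N_{0}^{\prime},V_{\h_{n}^{\prime,\circ}\An})$.

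The main obstacle, and the step requiring the most care, is checking compatibility with the Hecke action so that the isomorphism descends to the finite slope parts. For $z\in Z_{M}^{+}$, set $z^{\prime}=xzx^{-1}\in Z_{M^{\prime}}^{+}$; since $x\in N$ we have $zN_{0}z^{-1}$ conjugated to $z^{\prime}N_{0}^{\prime}z^{\prime,-1}$, the rigid analytic groups $\h(z)_{n}$ and $\h(z^{-1})_{n}$ conjugated to $\h^{\prime}(z^{\prime})_{n}$ and $\h^{\prime}(z^{\prime,-1})_{n}$, and $\delta_{P}(z)=\delta_{P}(z^{\prime})$. One then tracks $\Ad(x)$ through each of the four arrows appearing in the composite (\ref{eqn:Hecke})---the restriction map (\ref{eqn:res}), the isomorphism of Proposition \ref{prop:Hecke_isom}, the conjugation step (\ref{eqn:vln_vhn}), and the transition map of Proposition \ref{prop:H(V_L)->H(V_H)}---each of which is functorial under inner automorphisms of $G$. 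This shows that $\pi_{N_{0},n,z}^{\prime}$ on the $M$-side intertwines with $\pi_{N_{0}^{\prime},n,z^{\prime}}^{\prime}$ on the $M^{\prime}$-side, so the isomorphism is $Z^{+}$-equivariant once we identify $Z_{M}\cong Z_{M^{\prime}}$ by $\Ad(x)$.

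Finally, I would conclude by tensoring with $C^{\an}(\widehat{Z}_{M},K)$ over $K[Z^{+}]$, invoking the finite-slope-part description (\ref{eqn:HJp(V)_H}); the preceding equivariance guarantees that the map respects the $C^{\an}(\widehat{Z}_{M},K)$-module structures (under the identification of the character varieties $\widehat{Z}_{M}\isom\widehat{Z}_{M^{\prime}}$ induced by $\Ad(x)$). The extension from the $M^{+}$-action to the full $M$-action is immediate by \cite[Prop.~3.3.6]{Jacquet1}, so the map is an isomorphism in $\rep_{\textnormal{la,c}}^{z}(M)$ as required. The only genuinely substantive check in the entire argument is the Hecke compatibility in the previous paragraph; everything else is a direct transport of the construction by an inner automorphism.
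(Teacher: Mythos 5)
Your proposal takes essentially the same approach as the paper: transport the whole construction along the inner automorphism $\Ad(x)$, which intertwines the distribution algebras $D(H_{0},\h_{n}^{\circ})$ and $D(H_{0}^{\prime},\h_{n}^{\prime,\circ})$ together with their $N_{0}$- and $N_{0}^{\prime}$-coinvariants, so that the homology groups are naturally identified. The paper compresses the remaining verifications (the $Z^{+}$-/Hecke equivariance and compatibility with the finite-slope-part construction) into the phrase ``the proposition follows immediately,'' and your proof simply carries out these checks explicitly.
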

\begin{proof}
Multiplication by $x$ induces isomorphisms of compact type $K$-algebras \begin{equation*}
    D(H_{0},\h_{n}^{\circ})\cong D(H_{0}^{\prime},\h_{n}^{\prime,\circ}) \textnormal{ and } D(H_{0},\h_{n}^{\circ})_{N_{0}}\cong D(H_{0}^{\prime},\h_{n}^{\prime,\circ})_{N_{0}^{\prime}}. 
\end{equation*} Therefore, all the homology groups are naturally identified and the proposition follows immediately. 
\end{proof}

\begin{proposition}
Suppose $L_{0}$ is a good analytic open subgroup of $H_{0}$. For each $n\geq 0$, let $L_{n}=H_{n}\cap L_{0}$, $M_{n}^{\prime} = M\cap L_{n}$, $\overline{P}_{n}^{\prime} = \overline{P}\cap L_{n}$, $N_{n}^{\prime}= N\cap L_{n}$ and let $\L_{n}$, $\n_{n}^{\prime}$, $\overline{\P}_{n}^{\prime}$ and $\n^{\prime}_{n}$ be their respective rigid analytic Zariski closures in $\L_{n}$. Then there is a natural $M$-equivariant topological isomorphism \begin{equation}\label{eqn:inv_of_choice_fs}
    \left(\underset{\underset{n}{\ra}}{\lim}H^{\star}(N_{0},V_{\h_{n}^{\circ}\An})^{\prime}_{b}) \right)_{\textnormal{fs}} \cong \left(\underset{\underset{n}{\ra}}{\lim}H^{\star}(N_{0}^{\prime},V_{\L_{n}^{\circ}\An})^{\prime}_{b}) \right)_{\textnormal{fs}}
\end{equation}
\end{proposition}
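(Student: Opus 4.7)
The strategy mirrors the proof of \cite[Lem. 3.4.7]{Jacquet1}, where the analogous independence of $J_P$ from the compact open subgroup of $N$ is established. The underlying principle is that, once we pass to the finite slope part, the Hecke operators of elements of $Z^+$ become invertible and erase the choice of $N_0$.

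First, applying Proposition \ref{prop:H(V_L)->H(V_H)} to the inclusion $L_0 \subseteq H_0$ produces, for every pair $(m,n)$ sufficiently large, a natural continuous $K$-linear map $H_\star(N_0', (V_{\L_m^{\circ}\An})'_b) \to H_\star(N_0, (V_{\h_n^{\circ}\An})'_b)$. Passing to Hausdorff completions, to the projective limit in $n$ (with $m$ chosen large enough depending on $n$), to strong duals, and finally to the direct limit assembles these into a natural continuous $K$-linear map
\begin{equation*}
    \Phi : \underset{\underset{n}{\ra}}{\lim}\, H^{\star}(N_0, V_{\h_n^{\circ}\An}) \longrightarrow \underset{\underset{n}{\ra}}{\lim}\, H^{\star}(N_0', V_{\L_n^{\circ}\An}).
\end{equation*}
Using the description of the Hecke action directly on resolutions in Remark \ref{rmk: Hecke_on_terms}, one checks that $\Phi$ commutes with the $Z^+$-actions on either side; combined with its tautological $M_0'$-equivariance (which is part of Proposition \ref{prop:H(V_L)->H(V_H)}) and the fact that $M_0'$ and $Z^+$ together generate $M$, this shows that the induced map $\Phi_{\textnormal{fs}}$ on finite slope parts is $M$-equivariant.

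Next, I would construct an inverse to $\Phi_{\textnormal{fs}}$. Choose $z \in Z^+$ large enough that $zN_0z^{-1} \subseteq N_0'$ and $z^{-1}\overline{N}_0' z \subseteq \overline{N}_0$; such $z$ exists by Proposition \ref{prop:good_grps}(6)--(8) after replacing it by a suitable power. Conjugation by $z$ identifies the rigid analytic group $\L_n$ (for $n$ large) with a rigid analytic subgroup of some $\h_m$, so that a symmetric application of Proposition \ref{prop:H(V_L)->H(V_H)} yields a candidate inverse
\begin{equation*}
    \Psi : \underset{\underset{n}{\ra}}{\lim}\, H^{\star}(N_0', V_{\L_n^{\circ}\An}) \longrightarrow \underset{\underset{n}{\ra}}{\lim}\, H^{\star}(N_0, V_{\h_n^{\circ}\An}).
\end{equation*}
I would then verify that, up to the normalising factor $\delta_P(z)$, the compositions $\Psi \circ \Phi$ and $\Phi \circ \Psi$ coincide with the Hecke endomorphisms $\pi'_{N_0, n, z}$ and $\pi'_{N_0', n, z}$ respectively. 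Since both become invertible after applying $(-)_{\textnormal{fs}}$, it follows that $\Phi_{\textnormal{fs}}$ is a topological isomorphism, as required.

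The main obstacle is the verification in the final step that $\Psi \circ \Phi$ and $\Phi \circ \Psi$ reproduce the Hecke operators. This amounts to recognising that $\pi'_{N_0, n, z}$ is itself built, per the construction in Subsection \ref{sec:Hecke} and especially Remark \ref{rmk: Hecke_on_terms}, out of a restriction map, the change-of-group isomorphism provided by Proposition \ref{prop:Hecke_isom}, and conjugation by $z$---precisely the ingredients that make up $\Phi$ and $\Psi$. Once this identification is made, the invertibility of the Hecke operators on the finite slope part, already used in Subsection \ref{sec:def} in defining $H^\star J_P(V)$, delivers the conclusion.
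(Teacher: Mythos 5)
Your proposal matches the paper's strategy. The paper also builds two maps in opposite directions (its $\varphi$ and $\psi$, corresponding roughly to your $\Psi$ and $\Phi$ after dualising), with $\psi$ carrying the conjugation by a chosen element $\alpha\in Y^{+}$ with $\alpha N_{0}\alpha^{-1}\subseteq N_{0}^{\prime}$; it verifies that $\psi\circ\varphi$ and $\varphi\circ\psi$ equal $\frac{1}{\delta_P(\alpha)}\pi^{\prime}_{N_0,\alpha}$ and $\frac{1}{\delta_P(\alpha)}\pi^{\prime}_{N_0^{\prime},\alpha}$, and then invokes \cite[Prop. 3.2.20]{Jacquet1} to invert on finite slope parts. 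One detail to tighten in your step 2: the Hecke action on the $L_{n}$-side is defined using the monoid $(Z^{+})^{\prime}$ built from $N_{0}^{\prime}$ and $\overline{N}_{0}^{\prime}$, which need not agree with $Z^{+}$, so the equivariance of $\Phi$ and $\Psi$ should be asserted for $C^{+}:=Z^{+}\cap (Z^{+})^{\prime}$ (and for $M^{+}\cap (M^{+})^{\prime}$ when upgrading to $M$-equivariance); both of these still generate $Z_{M}$ and $M$ respectively by \cite[Prop. 3.3.2]{Jacquet1}, so the conclusion survives intact.
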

\begin{proof}
Let $(M^{+})^{\prime}$ be the submonoid of $M$ that conjugates $N_{0}^{\prime}$ into itself. Let $Z_{M^{\prime}}^{+}=(M^{+})^{\prime}\cap Z_{M}$. Let $(Z^{+})^{\prime}$ be the submonoid of $Z_{M^{\prime}}^{+}$ consisting of elements $z$ such that $z^{-1}\overline{N}_{0}^{\prime}z\sub \overline{N}_{0}^{\prime}$. Let $u^{\prime}\in (Z^{+})^{\prime}$ be an element satisfying the properties that $(u^{\prime})^{-1}\overline{N}^{\prime}_{n}u^{\prime}\sub \overline{N}^{\prime}_{n+1}$ for all $n\geq 0$, and that $(u^{\prime})^{-1}$ and $(Z^{+})^{\prime}$ generates $Z_{M}$. Define $(Y^{+})^{\prime}$ (resp. $Y^{\prime}$) to be the submonoid (resp. subgroup) of $Z_{M}$ generated by $u^{\prime}$. 

Choose $\alpha\in Y^{+}$ such that $\alpha N_{0}\alpha^{-1}\sub N_{0}^{\prime}$. For each $n$ large enough so that $H_{n}\sub L_{0}$, define $\varphi_{n}$ to be the following composition of continuous $D(M_{0}^{\prime},\m_{n}^{\prime,\circ})$-linear maps \begin{equation*}\begin{split}
    \varphi_{n}: &\, H_{\star}(N_{0},(V_{\h_{n}^{\circ}\An})^{\prime}_{b}) \overset{\textnormal{res, }(\ref{eqn:res})}{\longrightarrow} H_{\star}(N_{0}^{\prime},(V_{\h_{n}^{\circ}\An})^{\prime}_{b}) \\
    & \overset{\textnormal{Prop. }\ref{prop:H(V_L)->H(V_H)}}{\cong} D(\overline{P}^{\prime}_{0},\overline{\P}_{n}^{\circ})\underset{D(\overline{P}^{\prime}_{0},\overline{\P}^{\prime,\circ}_{n})}{\widehat{\otimes}} H_{\star}(N_{0}^{\prime},(V_{\L_{n}^{\circ}\An})^{\prime}_{b}).
\end{split}
\end{equation*}  For each $n$ large enough so that $H_{n}\sub H(\alpha)_{n}$ and $H_{n}\sub H(\alpha^{-1})_{n}$, let $\psi_{n}$ be the continuous $D(M_{0}^{\prime},\m_{n}^{\prime,\circ})$-linear map defined by the composition \begin{equation*}\begin{split}
    \psi_{n}: & \, H_{\star}(N_{0}^{\prime},(V_{\L_{n}^{\circ}\An})^{\prime}_{b})  \overset{\textnormal{Prop. } \ref{prop:H(V_L)->H(V_H)}}{\longrightarrow}   H_{\star}(\alpha N_{0}\alpha^{-1},\vn) \overset{\textnormal{Prop. }\ref{prop:Hecke_isom}}{\cong} H_{\star}(\alpha N_{0}\alpha^{-1},(V_{\h(\alpha^{-1})_{n}^{\circ}\An})^{\prime}_{b}) \\
    & \overset{(\ref{eqn:vln_vhn})}{\cong}H_{\star}(N_{0},(V_{\h(\alpha)_{n}^{\circ}\An})^{\prime}_{b}) \overset{\textnormal{Prop. } \ref{prop:H(V_L)->H(V_H)}} {\longrightarrow} H_{\star}(N_{0},\vn).
\end{split} 
\end{equation*} Upon passing to Hausdorff completion and projective limits, the $\varphi_{n}$'s and $\psi_{n}$'s define continuous $D(M_{0}^{\prime},\m_{n}^{\prime,\circ})$-linear maps \begin{equation}\label{eqn:inv_of_choice} \begin{split}
    \varphi: & \, \underset{\underset{n}{\la}}{\lim} \widehat{H}_{\star}(N_{0},(V_{\h_{n}^{\circ}\An})^{\prime}_{b}) \ra \underset{\underset{n}{\la}}{\lim} \widehat{H}_{\star}(N_{0}^{\prime},(V_{\L_{n}^{\circ}\An})^{\prime}_{b}) \\
    \psi: & \,  \underset{\underset{n}{\la}}{\lim} \widehat{H}_{\star}(N_{0}^{\prime},(V_{\L_{n}^{\circ}\An})^{\prime}_{b}) \ra \underset{\underset{n}{\la}}{\lim} \widehat{H}_{\star}(N_{0},(V_{\h_{n}^{\circ}\An})^{\prime}_{b}).
\end{split} 
\end{equation} 

Let $C^{+}= Z^{+} \cap (Z^{+})^{\prime}$ and $D^{+}=M^{+}\cap (M^{+})^{\prime}$, then $C^{+}$ generates $Z_{M}$ as a group and $D^{+}$ generates $M$ as a group \cite[Prop. 3.3.2]{Jacquet1}. It is not hard to check that $\varphi$ and $\psi$ are both $C^{+}$-equivariant. Their compositions $\psi\circ \varphi$ and $\varphi \circ \psi$ are exactly $\frac{1}{\delta_{P}(\alpha)}\pi^{\prime}_{N_{0},\alpha}$ and $\frac{1}{\delta_{P}(\alpha)}\pi^{\prime}_{N_{0}^{\prime},\alpha}$ respectively. Proposition 3.2.20 of \cite{Jacquet1} shows that (\ref{eqn:inv_of_choice_fs}) is an $Z_{M}$-equivariant topological isomorphism. Furthermore, $\varphi$ and $\psi$ intertwines the $M^{+}\cap (M^{+})^{\prime}$-action on the source and target of (\ref{eqn:inv_of_choice}). Thus, (\ref{eqn:inv_of_choice_fs}) is in fact $M$-equivariant. 
\end{proof}


\vspace{3mm}
\section{Properties of $H^{\star}J_P$}\label{sec:properties}
In this section, we prove that $H^{\star}J_{P}(V)$ is a $\delta$-functor from $\rep_{\textnormal{ad}}(G)\ra \rep_{\textnormal{es}}(M)$. 

\subsection{Essential Admissibility}\label{sec:Essential_Admissibility}

The goal of this subsection is to prove that $H^{\star}J_{P}(V)$ is an essentially admissible locally analytic $M$-representation. We use a number of ideas introduced in \cite{Jacquet1}. In particular, we want to show that $H^{\star}J_{P}(V)$ satisfies the hyptheses of \cite[Prop. 3.2.24]{Jacquet1}, which are: 
\begin{enumerate}
    \item There is an element $z\in Z^{+}$ such that for all $n\geq 0$ sufficiently large, the Hecke operator $\pi_{N_{0},n,z}^{\prime}:\, \widehat{H}_{\star}(N_{0},(V_{\h_{n}^{\circ}\An})_{b}^{\prime}) \ra \widehat{H}_{\star}(N_{0},(V_{\h_{n}^{\circ}\An})_{b}^{\prime})$ factors through \begin{equation*}
        D(M_{0},\m_{n}^{\circ})\underset{D(M_{0},\m_{n+1}^{\circ})}{\widehat{\otimes}} \widehat{H}_{\star}(N_{0},(V_{\h_{n+1}^{\circ}\An})_{b}^{\prime}) \ra  \widehat{H}_{\star}(N_{0},(V_{\h_{n}^{\circ}\An})_{b}^{\prime})
    \end{equation*} to give a commutative diagram \begin{equation*}
    \begin{tikzcd}	
        D(M_{0},\m_{n}^{\circ})\underset{D(M_{0},\m_{n+1}^{\circ})}{\widehat{\otimes}} \widehat{H}_{\star}(N_{0},(V_{\h_{n+1}^{\circ}\An})_{b}^{\prime} \arrow[r] \arrow[d,"id\widehat{\otimes} \pi_{N_{0},n,z}^{\prime}"] & \widehat{H}_{\star}(N_{0},(V_{\h_{n}^{\circ}\An})_{b}^{\prime}) \arrow[d,"\pi_{N_{0},n,z}^{\prime}"] \arrow[dl] \\ 
	    D(M_{0},\m_{n}^{\circ})\underset{D(M_{0},\m_{n+1}^{\circ})}{\widehat{\otimes}} \widehat{H}_{\star}(N_{0},(V_{\h_{n+1}^{\circ}\An})_{b}^{\prime} \arrow[r] & \widehat{H}_{\star}(N_{0},(V_{\h_{n}^{\circ}\An})_{b}^{\prime})
    \end{tikzcd}.
    \end{equation*}
    \item For all $n\geq 0$ sufficiently large, the induced $D(M_{0},\m_{n}^{\circ})[Z^{+}]$-linear map \begin{equation*}
        D(M_{0},\m_{n}^{\circ})\underset{D(M_{0},\m_{n+1}^{\circ})}{\widehat{\otimes}} \widehat{H}_{\star}(N_{0},(V_{\h_{n+1}^{\circ}\An})_{b}^{\prime})\ra \widehat{H}_{\star}(N_{0},(V_{\h_{n}^{\circ}\An})_{b}^{\prime})
    \end{equation*} is $D(M_{0},\m_{n}^{\circ})$-compact in the sense of \cite[Def. 2.3.3]{Jacquet1}. 
\end{enumerate}

Fix $z\in Z^{+}$ satisfying the condition that for all $n\geq 0$, the inclusions $z^{-1}\overline{N}_{n}z\sub\overline{N}_{n}$ given by condition (7) factor through $\overline{N}_{n+1}\sub\overline{N}_{n}$. For each $n\geq 0$, let $H(z)_{n,n+1}$ be the good analytic open subgroup of $G$ corresponding to the $\zp$-Lie lattice $\Ad_{z^{-1}}(\H_{n})\cap \H_{n+1}$. The choice of $z$ and the sequence of good analytic open subgroups $\{H_{n}\}$ imply that each $\h(z)_{n,n+1}$ admits a rigid analytic Iwahori decomposition \[
    \h(z)_{n,n+1} \cong z^{-1}\overline{\n}_{n}z\times \m_{n+1} \times \n_{n+1}.
\] 
Since $H(z)_{n,n+1}$ is a normal subgroup of $H(z)_{0}$, we can define the topological $K$-algebra $D(H(z)_{0},\h(z)_{n,n+1}^{\circ})$ to be the strong dual of $C^{\La}(H(z)_{0},K)_{\h(z)_{n,n+1}\An}$. 

Recall that in the construction of the endomorphism $\pi_{N_{0},n,z}^{\prime}$ of $H_{\star}(N_{0},(V_{\h_{n}^{\circ}\An})_{b}^{\prime})$, it factors through $H_{\star}(N_{0},(V_{\h(z)_{n}^{\circ}\An})_{b}^{\prime})$. To show that condition (1) is satisfied, it suffices to show that there is a commutative diagram \begin{equation}\label{eqn:factorization_H(z)_n,n+1}
\begin{tikzcd}
	 & \widehat{H}_{\star}(N_{0},(V_{\h_{n}^{\circ}\An})_{b}^{\prime}) \arrow[d] \\ 
D(M_{0},\m_{n}^{\circ})\underset{D(M_{0},\m_{n+1}^{\circ})}{\widehat{\otimes}} \widehat{H}_{\star}(N_{0},(V_{\h(z)_{n,n+1}^{\circ}\An})_{b}^{\prime})\arrow[r] \arrow[d] & \widehat{H}_{\star}(N_{0},(V_{\h(z)_{n}^{\circ}\An})_{b}^{\prime}) \arrow[d] \\ 
     D(M_{0},\m_{n}^{\circ})\underset{D(M_{0},\m_{n+1}^{\circ})}{\widehat{\otimes}} \widehat{H}_{\star}(N_{0},(V_{\h_{n+1}^{\circ}\An})_{b}^{\prime})\arrow[r] & \widehat{H}_{\star}(N_{0},(V_{\h_{n}^{\circ}\An})_{b}^{\prime})
\end{tikzcd}
\end{equation}
such that the middle horizontal map is an isomorphism. This is exactly the second assertion of Proposition \ref{prop:H(V_L)->H(V_H)} with the observation that the completed tensor products \begin{equation*}
    D(H(z)_{0},\h(z)_{n}^{\circ})_{N_{0}}\underset{D(H(z)_{0},\h(z)_{n,n+1}^{\circ})_{N_{0}}}{\widehat{\otimes}} \textnormal{ and }  D(M_{0},\m_{n}^{\circ})\underset{D(M_{0},\m_{n+1}^{\circ})}{\widehat{\otimes}}
\end{equation*} agree by the Iwahori decomposition of $\h(z)_{n}$ and $\h(z)_{n,n+1}$.

The left vertical and the bottom horizontal $D(M_{0},\m_{n}^{\circ})$-linear morphisms of (\ref{eqn:factorization_H(z)_n,n+1}) are defined by Proposition \ref{eqn:H(V_L)->H(V_H)}. The commutativity of the diagram follows from the fact that the rigid analytic inclusion $\h(z)_{n,n+1}\sub \h_{n}$ factors through $\h_{n+1}$ by our choice of $z$. 

It remains to show that hypothesis (2) holds. Fix $n\geq 0$ sufficiently large. Since $D(H_{0},\h_{n+1}^{\circ})\ra \dn$ is flat, we can find 
finite free resolutions $P_{n,\star}\ra (V_{\h_{u}^{\circ}\An})_{b}^{\prime}$ (resp. $P_{n+1,\star}\ra (V_{\h_{n+1}^{\circ}\An})_{b}^{\prime}$) by $D(H_{0},\h_{u}^{\circ})$-modules (resp. $D(H_{0},\h_{n+1}^{\circ})$-modules), where $P_{n,\star}$ and $P_{n+1,\star}$ have the same respective ranks. For each $k\geq0$, let $\d_{\ell,k}:\,(P_{\ell,k+1})_{N_{0}}\ra (P_{\ell,k})_{N_{0}}$ be the boundary map for each $\ell=n,n+1$ and all $k\geq 0$. Recall that the transition maps $P_{\ell,k+1}\ra P_{\ell,k}$ as well as the boundary maps $\d_{\ell,k}$ are automatically continuous and strict for each $\ell=n,n+1$ and $k\geq 0$ \cite[Prop. A.10]{Jacquet2}. 

By \cite[Prop. 4.2.22]{Jacquet1}, the continuous $D(M_{0},\m_{n}^{\circ})$-linear map \[
    D(M_{0},\m_{n}^{\circ})\underset{D(M_{0},\m_{n+1}^{\circ})}{\widehat{\otimes}}(P_{n+1,k})_{N_{0}} \ra (P_{n,k})_{N_{0}}
\] induced by the natural map $D(H_{0},\h_{n+1}^{\circ})\ra \dn$ is $D(M_{0},\m_{n}^{\circ})$-compact for each $k\geq 0$. Furthermore, \[
    D(M_{0},\m_{n}^{\circ})\underset{D(M_{0},\m_{n+1}^{\circ})}{\widehat{\otimes}}\ker(\d_{n+1,k}) \ra \ker(\d_{n,k})
\] is compact \cite[Lem. 2.3.4]{Jacquet1}. There is a natural commutative diagram \begin{equation}
    \begin{tikzcd}
	D(M_{0},\m_{n}^{\circ})\underset{D(M_{0},\m_{n+1}^{\circ})}{\widehat{\otimes}}\ker(\d_{n+1,k}) \arrow[r] \arrow[d] &  \ker(\d_{n,k}) \arrow[d] \\
	D(M_{0},\m_{n}^{\circ})\underset{D(M_{0},\m_{n+1}^{\circ})}{\widehat{\otimes}} \widehat{H}_{\star}(N_{0},(V_{\h_{n+1}^{\circ}})_{b}^{\prime})\arrow[r] & \widehat{H}_{\star}(N_{0},(V_{\h_{n}^{\circ}})_{b}^{\prime})
\end{tikzcd}.
\end{equation} Applying \cite[Lem. 2.3.4]{Jacquet1} again, we see that the bottom horizontal arrow is $D(M_{0},\m_{n}^{\circ})$-compact. Proposition $3.2.24$ of \cite{Jacquet1} completes the proof of the essential admissibility of $H^{\star}J_{P}(V)$ as a locally analytic $M$-representation.


\subsection{Delta Functor}\label{sec:delta}
In this subsection, we present the proof that $H^{\star}J_{P}(V)$ is a $\delta$-functor.

For each $n\geq 0$, let $P_{n,\star} = \dn^{r_{n,\star}} \ra \vn$ be a free resolution of $\vn$ consisting of finitely generated $\dn$-modules and let $\d_{n,k}:\,(P_{n,k})_{N_{0}}\ra (P_{n,k-1})_{N_{0}}$ be the boundary maps. For each $n\geq 0$, \[
    \widetilde{P}_{n+1,\star}:=\dn \underset{D(H_{0},\h_{n+1}^{\circ})}{\otimes} P_{n+1,\star} = \dn^{r_{n+1,\star}} \ra \vn 
\] is also a projective resolution of $\vn$ by free $\dn$-modules. The identity map on $(\vn)_{N_{0}}$ induces chain maps \begin{equation}\label{eqn:tilde_P_n+1->P_n}
    i_{n,\star}:\,(\widetilde{P}_{n+1,\star})_{N_{0}}\ra (P_{n,\star})_{N_{0}}.
\end{equation} Define the transition map $P_{n+1,\star}\ra P_{n,\star}$ to be the composition of the natural map $P_{n+1,\star}\ra \widetilde{P}_{n+1,\star}$ induced by the rigid analytic inclusion $\h_{n+1}\ra \h_{n}$ with (\ref{eqn:tilde_P_n+1->P_n}). Since the rigid analytic inclusion $\overline{\P}_{n+1}\sub\overline{\P}_{n}$ is relatively compact, the natural map $D(\overline{P}_{0},\overline{\P}_{n+1}^{\circ})\ra D(\overline{P}_{0},\overline{\P}_{n}^{\circ})$ is compact as a map between locally convex $K$-vector spaces. Then the transition maps $(P_{n+1,\star})_{N_{0}}\ra (P_{n,\star})_{N_{0}}$ are also compact, since it factors through \[
    (P_{n+1,\star})_{N_{0}} = D(\overline{P}_{0},\overline{\P}_{n+1}^{\circ})^{r_{n+1,\star}} \ra D(\overline{P}_{0},\overline{\P}_{n}^{\circ})^{r_{n+1,\star}} = (\widetilde{P}_{n+1,\star})_{N_{0}}.
\] Therefore, the projective limit $\underset{\underset{n}{\la}}{\lim}(P_{n,\star})_{N_{0}}$ is a nuclear Fr\'echet space. 

For each $\ell=n,n+1$ and $z\in Z^{+}$, let $R_{\ell,\star} = D(H(z)_{0},\h(z)_{\ell}^{\circ})^{s_{\ell,\star}} \ra (V_{H(z)_{\ell}^{\circ}\An})^\prime_{b}$ be a free resolution of $(V_{H(z)_{\ell}^{\circ}\An})^\prime_{b}$ by finitely generated $D(H_{0},\h(z)_{\ell}^{\circ})$-modules. By replacing $P_{\ell,\star}$, we can assume that $P_{\ell,\star}$ is the base change of $R_{\ell,\star}$ to $D(H_{0},\h_{\ell}^{\circ})$ over $D(H(z)_{0},\h(z)_{\ell}^{\circ})$. Then $s_{\ell,\star}\cdot[N_{0}:zN_{0}z^{-1}]=r_{\ell,\star}$. Let $\widetilde{R}_{n+1,\star}$ be the base change of $R_{n,\star}$ to $D(H(z)_{0},\h(z)_{n}^{\circ})$ over $D(H(z)_{0},\h(z)_{n+1}^{\circ})$. The identity map on $((V_{H(z)_{n}^{\circ}\An})^\prime_{b})_{N_{0}}$ induces chain maps \begin{equation}\label{eqn:tilde_R_n+1->R_n}
    j_{n,\star}:\,(R_{n,\star})_{N_{0}}\ra (\widetilde{R}_{n+1,\star})_{N_{0}}.
\end{equation} 

By Remark \ref{rmk: Hecke_on_terms}, there is an action of $z\in Z^{+}$ on each $P_{n,\star}$. However, we need to use a slightly different action of $z$. Define $\tilde{\pi}_{N_{0},n,z}^{\prime}$ to be the composition \begin{equation}\begin{split}
    (P_{n,\star})_{N_{0}}  = & \dn^{r_{n,\star}}_{N_{0}}  \overset{\res}{\longrightarrow}  \dn^{r_{n,\star}}_{zN_{0}z^{-1}} \overset{}{\cong} D(H(z^{-1})_{0},\h_{n}^{\circ})^{s_{n,\star} }_{zN_{0}z^{-1}} \\
    \overset{Prop. \ref{prop:Hecke_isom}}{\cong} & D(H(z^{-1})_{0},\h(z^{-1})_{n}^{\circ})^{s_{n,\star} }_{zN_{0}z^{-1}} \overset{z^{-1}(\cdot)z}{\cong} D(H(z)_{0},\h(z)_{n}^{\circ})^{s_{n,\star} }_{N_{0}} = (R_{n,\star})_{N_{0}} \\
    \overset{j_{n,\star}}{\ra} &  D(H(z)_{0},\h(z)_{n}^{\circ})^{s_{n+1,\star} }_{N_{0}} = (\widetilde{R}_{n,\star})_{N_{0}} \ra D(H_{0},\h_{n}^{\circ})^{r_{n+1,\star}}_{N_{0}} = (\widetilde{P}_{n+1,\star})_{N_{0}} \\
    \overset{i_{n,\star}}{\longrightarrow } & D(H_{0},\h_{n}^{\circ})^{r_{n,\star}}_{N_{0}} = (P_{n,\star})_{N_{0}}. 
\end{split}\end{equation} The only difference to Remark \ref{rmk: Hecke_on_terms} is the additional intertwining of $i_{n,\star}$ and $j_{n,\star}$, so it induces the same Hecke operator upon passing to homology. 

\begin{lemma}
For each $k\geq 0$, \[
    C^{\an}(\widehat{Z}_{M},K)\widehat{\otimes}_{K[Z^{+}]}\underset{\underset{n}{\la}}{\lim}(P_{n,k})_{N_{0}} 
\] is a coadmissible $C^{\an}(\widehat{Z}_{M},K)\widehat{\otimes}_{K}D^{\La}(M_{0},K)$-module. 
\end{lemma}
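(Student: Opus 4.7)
The plan is to apply Proposition 3.2.24 of \cite{Jacquet1} to the projective system $\{(P_{n,k})_{N_{0}}\}_{n\geq 0}$ of coherent modules equipped with the chain-level Hecke operators $\tilde{\pi}^{\prime}_{N_{0},n,z}$ constructed at the beginning of Section \ref{sec:delta}. The argument closely mirrors Section \ref{sec:Essential_Admissibility}, where the two hypotheses of that proposition were verified at the level of the Hausdorff-completed homology groups $\widehat{H}_{\star}(N_{0},(V_{\h_{n}^{\circ}\An})_{b}^{\prime})$. The present situation is in fact technically cleaner because each $(P_{n,k})_{N_{0}}$ is a free $D(\overline{P}_{0},\overline{\mathbb{P}}_{n}^{\circ})$-module of finite rank by Lemma \ref{lem:Hd_coinv}, so no Hausdorff completion and no passage to homology is required.

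To verify Hypothesis (1), I would reproduce diagram (\ref{eqn:factorization_H(z)_n,n+1}) at the term level. Fix $z\in Z^{+}$ satisfying condition (8) of Proposition \ref{prop:good_grps}. By construction, $\tilde{\pi}^{\prime}_{N_{0},n,z}$ factors through maps involving the good analytic open subgroup $H(z)_{n,n+1}$ introduced in Section \ref{sec:Essential_Admissibility}, and the Iwahori decomposition of $\h(z)_{n,n+1}$ and $\h(z)_{n}$ gives
\[
    D(H(z)_{0},\h(z)_{n}^{\circ})_{N_{0}} \cong D(M_{0},\m_{n}^{\circ})\underset{D(M_{0},\m_{n+1}^{\circ})}{\widehat{\otimes}} D(H(z)_{0},\h(z)_{n,n+1}^{\circ})_{N_{0}}.
\]
Because $(P_{n+1,k})_{N_{0}}$ and $(P_{n,k})_{N_{0}}$ are free and base change commutes with finite direct sums, this yields the required factorization of $\tilde{\pi}^{\prime}_{N_{0},n,z}$ through $D(M_{0},\m_{n}^{\circ})\widehat{\otimes}_{D(M_{0},\m_{n+1}^{\circ})}(P_{n+1,k})_{N_{0}}$.

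To verify Hypothesis (2), I would show that the natural map
\[
    D(M_{0},\m_{n}^{\circ})\underset{D(M_{0},\m_{n+1}^{\circ})}{\widehat{\otimes}}(P_{n+1,k})_{N_{0}} \;\longrightarrow\; (P_{n,k})_{N_{0}}
\]
is $D(M_{0},\m_{n}^{\circ})$-compact. Under the Iwahori-type decomposition of Lemma \ref{lem:Hd_coinv}, this map is built out of the natural transition map $D(\overline{N}_{0},\overline{\n}_{n+1}^{\circ})\to D(\overline{N}_{0},\overline{\n}_{n}^{\circ})$, which is compact as a map of locally convex $K$-vector spaces because the rigid analytic inclusion $\overline{\n}_{n+1}\subseteq\overline{\n}_{n}$ is relatively compact by property (3) of Proposition \ref{prop:good_grps}. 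Compactness in the $\overline{N}$-direction then produces $D(M_{0},\m_{n}^{\circ})$-compactness of the displayed map, exactly as in the proof of \cite[Prop. 4.2.22]{Jacquet1} and the argument at the end of Section \ref{sec:Essential_Admissibility}. Once both hypotheses are established, Proposition 3.2.24 of \cite{Jacquet1} immediately yields the coadmissibility claim.

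The main obstacle I anticipate is the bookkeeping in Hypothesis (1): the Hecke operator $\tilde{\pi}^{\prime}_{N_{0},n,z}$ as defined in Section \ref{sec:delta} incorporates the intertwining chain maps $i_{n,\star}$ and $j_{n,\star}$ needed to match resolutions of adjacent levels, and one must check that the square produced from the $H(z)_{n,n+1}$-construction commutes on the nose with these intertwiners, rather than merely up to chain homotopy. Once this compatibility is unwound, the verification reduces to the algebra of Iwahori decompositions already carried out in Section \ref{sec:Essential_Admissibility}.
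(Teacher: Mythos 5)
Your proposal is correct and takes essentially the same approach as the paper: verify the two hypotheses of \cite[Prop. 3.2.24]{Jacquet1} at the term level, using the intermediate groups $H(z)_{n,n+1}$ and the Iwahori decomposition for hypothesis (1), and \cite[Lem. 2.3.4, Prop. 4.2.22]{Jacquet1} for the $D(M_{0},\m_{n}^{\circ})$-compactness in hypothesis (2). The paper's proof indeed resolves the bookkeeping concern you raise by exhibiting an explicit commutative diagram that factors the chain-level Hecke operator (including the intertwiner $j_{n,\star}$) through a map $\varphi$ which is then shown to be an isomorphism by the Iwahori decomposition, exactly as you anticipate.
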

\begin{proof}
Similar to section \ref{sec:Essential_Admissibility}, we wish to show that the hypotheses of \cite[Prop. 3.2.24]{Jacquet1} hold. The $D(M_{0},\m_{n}^{\circ})$-compactness of \[
    D(M_{0},\m_{n}^{\circ}) \widehat{\otimes}_{D(M_{0},\m_{n+1}^{\circ})} (P_{n+1,k})_{N_{0}} \ra (P_{n,k})_{N_{0}}
\] follows from \cite[Lem. 2.3.4]{Jacquet1} and \cite[Prop. 4.2.22]{Jacquet1}. 

Let $z\in Z^{+}$ be an element satisfying the properties that $z^{-1}\overline{N}_{n}z\sub \overline{N}_{n+1}$ for all $n\geq 0$. By the choice of $z$, there is a commutative diagram \begin{equation}
    \begin{tikzcd}
        D(M_{0},\m_{n}^{\circ})\underset{D(M_{0},\m_{n+1}^{\circ})}{\widehat{\otimes}}(P_{n+1,\star})_{N_{0}} \arrow[r] \arrow[d] \arrow[ddd, bend right = 90, "id\widehat{\otimes} z"] & (P_{n,\star})_{N_{0}} \arrow[d] \arrow[ddd, bend left = 50, "z"] \\
        D(M_{0},\m_{n}^{\circ})\underset{D(M_{0},\m_{n+1}^{\circ})}{\widehat{\otimes}}(R_{n+1,\star})_{N_{0}} \arrow[r]  \arrow[d] &   (R_{n,\star})_{N_{0}} \arrow[d,"j_{n,\star}"] \\
        D(M_{0},\m_{n}^{\circ})\underset{D(M_{0},\m_{n+1}^{\circ})}{\widehat{\otimes}} D(H(z)_{0},\h(z)_{n,n+1}^{\circ})_{N_{0}}^{s_{n+1,\star}} \arrow[d] \arrow[r,"\varphi"] & (\widetilde{R}_{n+1,\star})_{N_{0}} \arrow[d] \\
	    D(M_{0},\m_{n}^{\circ})\underset{D(M_{0},\m_{n+1}^{\circ})}{\widehat{\otimes}} (P_{n+1,\star})_{N_{0}} \arrow[r]  & (P_{n,\star})_{N_{0}}
    \end{tikzcd}.
\end{equation}
To satisfy hypothesis (ii) of \cite[Prop. 3.2.24]{Jacquet1}, it suffices to show that $\varphi$ is an isomorphism. This follows from a consideration of the Iwahori decomposition of the relevant distribution algebras. 
\end{proof}

\begin{proposition}\label{prop:limH_H(im)}
There is a topological isomorphism \begin{equation}\label{eqn:limH_H(im)}
    \left(H^{\star}J_{P}(V)\right)^{\prime}_{b} \cong H_{\star} \left(C^{\an}(\widehat{Z}_{M},K)\widehat{\otimes}_{K[Z^{+}]}\underset{\underset{n}{\la}}{\lim}(P_{n,k})_{N_{0}}\right)
\end{equation} of coadmissible $C^{\an}(\widehat{Z}_{M},K)\widehat{\otimes}_{K}D^{\La}(M_{0},K)$-modules.
\end{proposition}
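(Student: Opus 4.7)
The plan is to prove the isomorphism by separately commuting the homology functor with the completed tensor product $C^{\an}(\widehat{Z}_{M},K)\widehat{\otimes}_{K[Z^{+}]}(-)$ and the inverse limit $\varprojlim_{n}$.

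First, I would rewrite both sides using (\ref{eqn:C(Z)=limC(Zn)}) and (\ref{eqn:HJp(V)_H}). The left-hand side becomes
\[
\left(H^{\star}J_{P}(V)\right)^{\prime}_{b} \cong \underset{\underset{n}{\la}}{\lim}\, C^{\an}(\widehat{Y}_{n},K)^{\dagger}\underset{K[u]}{\widehat{\otimes}} H_{\star}((P_{n,\star})_{N_{0}}),
\]
while for the right-hand side, commuting the completed tensor with the inverse limit gives
\[
C^{\an}(\widehat{Z}_{M},K)\widehat{\otimes}_{K[Z^{+}]}\underset{\underset{n}{\la}}{\lim}(P_{n,\star})_{N_{0}} \cong \underset{\underset{n}{\la}}{\lim}\, C^{\an}(\widehat{Y}_{n},K)^{\dagger}\underset{K[u]}{\widehat{\otimes}}(P_{n,\star})_{N_{0}},
\]
using that the projective system $\{(P_{n,\star})_{N_{0}}\}$ has compact transition maps (shown at the start of this subsection) and the expression (\ref{eqn:C(Z)=limC(Zn)}) for $C^{\an}(\widehat{Z}_{M},K)$.

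Next, for each fixed $n$, I would establish that
\[
C^{\an}(\widehat{Y}_{n},K)^{\dagger}\underset{K[u]}{\widehat{\otimes}} H_{\star}((P_{n,\star})_{N_{0}}) \cong H_{\star}\!\left(C^{\an}(\widehat{Y}_{n},K)^{\dagger}\underset{K[u]}{\widehat{\otimes}}(P_{n,\star})_{N_{0}}\right).
\]
Since $C^{\an}(\widehat{Y}_{n},K)^{\dagger}$ is flat over $K[u]$ and all boundary maps in $(P_{n,\star})_{N_{0}}$ are continuous and strict by \cite[Prop. A.10]{Jacquet2} (using that each term is a finitely presented $D(\overline{P}_{0},\overline{\P}_{n}^{\circ})$-module by Lemma \ref{lem:Hd_coinv}), tensoring the strict short exact sequences arising from the complex preserves exactness, and one obtains the comparison of homologies at each level.

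Finally, I would interchange $H_{\star}$ with $\varprojlim_{n}$. The compactness of the transition maps $(P_{n+1,\star})_{N_{0}}\to (P_{n,\star})_{N_{0}}$ established at the start of the subsection, together with the preceding lemma that identifies the terms $C^{\an}(\widehat{Z}_{M},K)\widehat{\otimes}_{K[Z^{+}]}\varprojlim_{n}(P_{n,k})_{N_{0}}$ as coadmissible modules over $C^{\an}(\widehat{Z}_{M},K)\widehat{\otimes}_{K}D^{\La}(M_{0},K)$, should imply that the induced projective systems on cycles and boundaries are Mittag-Leffler, so $\varprojlim^{1}$ vanishes and $H_{\star}$ commutes with $\varprojlim_{n}$. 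The main obstacle is precisely this last step: one must verify that the Mittag-Leffler condition (or equivalently the vanishing of $\varprojlim^{1}$) survives when restricted to the subsystems of cycles and boundaries, which requires using the coadmissibility framework and the fact that morphisms between coadmissible modules over weak Fréchet-Stein algebras behave well under inverse limits, as in \cite[Cor. 3.4.8]{ST_dist}. Combining the three isomorphisms yields the desired identification, and the topological isomorphism follows because all maps involved are continuous by \cite[Prop. A.10]{Jacquet2}.
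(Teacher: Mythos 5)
Your proposal has the same overall shape as the paper's proof: rewrite the left-hand side via (\ref{eqn:HJp(V)_H}), commute the completed tensor product past the inverse limit using compactness of the transition maps, commute the tensor with homology at each finite level via flatness and strictness, and finally interchange $H_{\star}$ with $\varprojlim_{n}$. Steps one through three match the paper. The difference, and the genuine gap, is in step four.

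The paper handles the final interchange by appealing to Komatsu's theorem \cite[Thm. 3]{Komatsu}, which says precisely that for a projective system of complexes of locally convex spaces whose transition maps are compact, the homology of the limit is the inverse limit of the \emph{Hausdorff} homologies $\ker/\overline{\im}$. This is the exact statement needed here, because the definition of $(H^{\star}J_{P}(V))^{\prime}_{b}$ involves $\widehat{H}_{\star}$, the Hausdorff homology, not the naive algebraic homology. Your Mittag--Leffler/$\varprojlim^{1}$ argument, even if made rigorous, would at best give the algebraic $H_{\star}$ of the limit as $\varprojlim H_{\star}$; it does not by itself produce the closures that appear in the Hausdorff homology. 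The paper deals with this by noting that the boundary maps $id\widehat{\otimes}\d_{k}$ are automatically strict with closed image (by \cite[Prop. A.10]{Jacquet2} and Bourbaki), so the closures become redundant \emph{after} tensoring --- but that observation is part of the Komatsu-based chain of equalities, and your proposal does not surface it.

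There are two further problems with your justification. First, the key input for the interchange is not coadmissibility but the compactness of the transition maps $(P_{n+1,\star})_{N_{0}}\to(P_{n,\star})_{N_{0}}$; coadmissibility of the terms $C^{\an}(\widehat{Z}_{M},K)\widehat{\otimes}_{K[Z^{+}]}\varprojlim_{n}(P_{n,k})_{N_{0}}$ is what the preceding lemma supplies, but it does not by itself yield the Mittag--Leffler property for the subsystems of cycles and boundaries; what does is the compactness, and Komatsu's theorem is the packaged form of that argument for topological vector spaces. Second, the citation \cite[Cor. 3.4.8]{ST_dist} appears not to exist; the relevant inverse-limit statements in Schneider--Teitelbaum are [Thm. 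A, Thm. B], and they address coadmissible modules over (strong) Fréchet--Stein algebras, not Hausdorff homology of complexes with compact transition maps. You correctly flag this last step as the main obstacle, and indeed it is: the proposal is structurally sound but the crucial interchange of homology with the projective limit is not established; the missing tool is \cite[Thm. 3]{Komatsu}, followed by the flatness-plus-density comparison with (\ref{eqn:HJp(V)_H}).
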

\begin{proof} First, recall that the category of coadmissible $C^{\an}(\widehat{Z}_{M},K)\widehat{\otimes}_{K}D^{\La}(M_{0},K)$-modules is abelian \cite{ST_dist}, and so the target of the isomorphism is also coadmissible. Similar to (\ref{eqn:HJp(V)_H}), it follows from \cite[Prop. 3.2.28]{Jacquet1} and \cite[Prop. 1.1.32]{analytic} that there is an isomorphism of coadmissible $C^{\an}(\widehat{Z}_{M},K)\widehat{\otimes}_{K}D^{\La}(M_{0},K)$-modules \begin{equation}\label{eqn:HJp(V)_alt}
    H_{\star}\left(C^{\an}(\widehat{Z}_{M},K)\widehat{\otimes}_{K[Z^{+}]}\underset{\underset{n}{\la}}{\lim}(P_{n,k})_{N_{0}}\right) \cong H_{\star} \left(\underset{\underset{n}{\la}}{\lim}C^{\an}(\widehat{Y}_{n},K)^{\dagger}\widehat{\otimes}_{K[u]}(P_{n,k})_{N_{0}}\right).
\end{equation} It suffices to show that the targets of (\ref{eqn:HJp(V)_H}) and (\ref{eqn:HJp(V)_alt}) agree. For each $k\geq 0$, there is a topological $K$-linear isomorphism \[
    H_{k} \left(\underset{\underset{n}{\la}}{\lim}C^{\an}(\widehat{Y}_{n},K)^{\dagger} \widehat{\otimes}_{K[C^{+}]}(P_{n,\star})_{N_{0}}\right) \cong \underset{\underset{n}{\la}}{\lim} \frac{\ker(id \widehat{\otimes} \d_{k})}{\overline{\im(id \widehat{\otimes} \d_{k+1})}},
\] where $\overline{\im(id \widehat{\otimes} \d_{k+1})}$ denotes the closure of the image of $id \widehat{\otimes} \d_{k+1}$ inside $C^{\an}(\widehat{Y}_{n},K)^{\dagger} \widehat{\otimes}_{K[u]}(P_{n,k})_{N_{0}}$ \cite[Thm. 3]{Komatsu}. In fact, a similar proof to \cite[Lem. A.11]{Jacquet2} show that $C^{\an}(\widehat{Y}_{n},K)^{\dagger} \widehat{\otimes}_{K[C^{+}]}(P_{n,\star})_{N_{0}}$ is a finitely presented module over the coherent ring $C^{\an}(\widehat{Y}_{n},K)^{\dagger} \widehat{\otimes}_{K[u]}D(M_{0},\m_{n}^{\circ})$. Additionally, the boundary maps $id \widehat{\otimes} \d_{k}$ are automatically strict with closed image \cite[Prop A.10]{Jacquet2} \cite[Cor 3, p. IV.28]{Bourbaki}. 

Since $K[u]\ra C^{\an}(\widehat{Y}_{n},K)^{\dagger}$ is flat, $C^{\an}(\widehat{Y}_{n},K)^{\dagger}\widehat{\otimes}_{K[u]} H_{\star}(N_{0},\vn)$ is exactly the completion of $\frac{\ker(id \otimes \d_{k})}{\im(id \otimes \d_{k+1})}$. The spaces $\ker(id\otimes\d_{k})$ and $\im(id\otimes\d_{k+1})$ are dense in the closed subspaces $\ker(id\widehat{\otimes}\d_{k})$ and $\im(id\widehat{\otimes}\d_{k+1})$ respectively. The proposition follows immediately.
\end{proof}

Since the definition of $\left(H^{\star}J_{P}(V)\right)^{\prime}_{b}$ is independent of the choice of resolution of $\vn$ for all $n\geq 0$, the same is true of the source of (\ref{eqn:limH_H(im)}). Suppose $0\ra U \ra V \ra W \ra 0$ is a strict short exact sequence in $\rep_{\textnormal{ad}}(G)$. For all $n\geq 0$, the sequence \begin{equation*}
    0 \ra (W_{\h_{n}^{\circ}\An})^{\prime}_{b} \ra (V_{\h_{n}^{\circ}\An})^{\prime}_{b} \ra (U_{\h_{n}^{\circ}\An})^{\prime}_{b} \ra 0
\end{equation*} is also strict exact \cite[Cor. 1.4]{ST_GL2} \cite[Thm. 1, p. IV.28]{Bourbaki} \cite[Cor. A.13]{Jacquet2}. Recall that taking $N_{0}$-coinvariants and finite slope part on the dual side are both right exact (in the strong sense that strict right exact sequences remain strict right exact) \cite[Prop. 3.2.6]{Jacquet1}. Then \cite[Thm. B]{ST_dist}, Proposition \ref{prop:limH_H(im)} and usual homological algebra show that there is a strict long exact sequence of coadmissible $C^{\an}(\widehat{Z}_{M},K)\widehat{\otimes}_{K}D^{\La}(M_{0},K)$-modules \begin{equation*}
    0 \la \left(H^{0}J_{P}(W)\right)^{\prime}_{b} \la \left(H^{0}J_{P}(V)\right)^{\prime}_{b} \la \left(H^{0}J_{P}(U)\right)^{\prime}_{b} \la \left(H^{1}J_{P}(W)\right)^{\prime}_{b} \la ....
\end{equation*} By taking the strong dual, we have just shown that $H^{\star}J_{P}$ is a $\delta$-functor. 

\begin{remark}
An alternative approach to proving that $H^{\star}J_{P}$ is a $\delta$-functor is to prove that taking finite slope part is exact. This is done in \cite[Thm. 4.5]{Weibo}.

\end{remark}


\vspace{3mm}
\section{Derived Jacquet-Emerton module of Orlik-Strauch Representations}\label{sec:example}

In this section, we present a method of computing the derived Jacquet-Emerton modules of Orlik-Strauch representations in the case $G=SL_{2}(\qp)$, along with some explicit examples. 

Let $T$ be the subgroup of $G$ consisting of diagonal matrices and let $P$ be the standard Borel subgroup of upper triangular matrices and $\overline{P}$ be the opposite Borel subgroup of lower triangular matrices. Let $w=\begin{pmatrix} 0 & 1 \\ -1 & 0 \end{pmatrix}$ be our choice of the non-trivial element of the Weyl group of $G$. Let $N_{0}$ be a maximal compact subgroup $\begin{pmatrix} 1 & \zp \\ 0 & 1 \end{pmatrix}$ of $N$. Let $z=\begin{pmatrix} p & 0 \\ 0 & p^{-1} \end{pmatrix}\in Z_{T}^{+}$, $X=\begin{pmatrix} 0 & 1 \\ 0 & 0 \end{pmatrix}\in \N$, $Y=\begin{pmatrix} 0 & 0 \\ 1 & 0 \end{pmatrix}\in \overline{\N}$ and $H=\begin{pmatrix} 1 & 0 \\ 0 & -1 \end{pmatrix}\in \mathfrak{t}$. For a representation $\sigma$ of $\overline{P}$, define \[
    \ind_{\overline{P}}^{G}\sigma = \left\{ f\in C^{\La}(G,\sigma)\mid f(pg)=\sigma(p)f(g)\, \forall\, p\in \overline{P}, \textnormal{ and }g\in G\right\}.
\]

Let $M$ be an object in the category $\o^{\overline{\mathfrak{p}}}$ as defined in \cite{OS_Jordan_Holder}. Suppose $\psi$ is a smooth character of $T$. Let $W$ be a $U(\overline{\mathfrak{p}})$-invariant $K$-subspace of $M$ that generates $M$ as a $U(\g)$-module. Then there is a short exact sequence of $U(\g)$-modules \[
    0 \ra \mathfrak{d} \ra U(\g)\otimes_{U(\overline{\mathfrak{p}})}W \ra M \ra 0.
\] Define a left action of $\g$ on $C^{\La}(G,W^\prime\otimes\psi)$ by \begin{align*}
    (\tau\cdot f)(g)= \frac{d}{dt} \mid_{t=0}f\left(\exp(-t\tau)g\right),
\end{align*} which extends to an action of $U(\g)$. Each $\tau\otimes w\in U(\g)\otimes_{K}W$ defines a $K$-linear continuous map \begin{align*}
    C^{\La}(G,W^\prime\otimes \psi) & \ra C^{\La}(G,\psi) \\
    f & \map  (\tau\otimes w)\cdot f :\, g\map (\tau\cdot f)(g)(w). 
\end{align*} As in \cite{OS_Jordan_Holder}, the Orlich-Strauch representation $\mathcal{F}_{\overline{P}}^{G}(M,\psi)$ is defined as the subspace of $\ind_{\overline{P}}^{G}W^\prime \otimes \psi$ consisting of functions $f$ such that $\mathfrak{s}\cdot f=0$ for all $\mathfrak{s}\in \mathfrak{d}$. 

For each $k\in 2\z$, let $\chi_{k}$ be the algebraic character of $T$ of weight $k$. Let $M(k)$ be the Verma module $U(\g)\otimes_{U(\overline{\mathfrak{p}})}\chi_{k}$ associated to $\chi_{k}$ and $L(k)$ its simple quotient. Then $\mathcal{F}_{\overline{P}}^{G}(M(-k),\psi)$ is the locally analytic principal series representation $\ind_{\overline{P}}^{G}\chi_{k}\psi$ and for $k\geq 0$, $\mathcal{F}_{\overline{P}}^{G}(L(-k),\psi)$ is the locally algebraic representation $L(-k)\otimes_{K}\textnormal{sm-Ind}_{\overline{P}}^{G}\psi$. 

In fact, for each $k\geq 0$, by applying the contravariant exact functor $\mathcal{F}_{\overline{P}}^{G}(\cdot ,\psi)$ to the BGG resolution \[
    0 \ra M(k+2) \ra M(-k) \ra L(-k) \ra 0,
\] there is a short exact sequence \begin{equation}\label{eq:loc_alg->ind->neg_wt}
    0 \ra L(-k)\otimes_{K}\textnormal{sm-Ind}_{\overline{P}}^{G}\psi \ra \ind_{\overline{P}}^{G}\chi_{k}\psi \ra \ind_{\overline{P}}^{G}\chi_{-k-2}\psi \ra 0
\end{equation} (see also \cite{ST_U(g)}). We now compute the derived Jacquet modules of each term of this exact sequence. 

Evaluation at $w$ induces a $P$-invariant surjection \[ \mathcal{F}_{\overline{P}}^{G}(M,\psi) \surj \mathcal{F}_{\overline{P}}^{G}(M,\psi)_{w}, \] where $\mathcal{F}_{\overline{P}}^{G}(M,\psi)_{w}$ is the stalk of $\mathcal{F}_{\overline{P}}^{G}(M,\psi)$ at $w$ \cite[Def. 2.4.2]{Jacquet2}. The kernel of this map is the subspace $\mathcal{F}_{\overline{P}}^{G}(M,\psi)(N)$ of $\mathcal{F}_{\overline{P}}^{G}(M,\psi)$ consisting of locally analytic functions with support in $\overline{P}N$. Taking $\N$-cohomology followed by $N_{0}$-invariants (keeping in mind the exactness of $N_{0}$-invarince on smooth representations) produces a long exact sequence \begin{align}\begin{split}\label{eq:n_cohom1}
    0 & \ra H^{0}(\N,\mathcal{F}_{\overline{P}}^{G}(M,\psi)(N))^{N_{0}} \ra H^{0}(\N,\mathcal{F}_{\overline{P}}^{G}(M,\psi))^{N_{0}} \ra H^{0}(\N,\mathcal{F}_{\overline{P}}^{G}(M,\psi)_{w})^{N_{0}} \\ 
    & \ra H^{1}(\N,\mathcal{F}_{\overline{P}}^{G}(M,\psi)(N))^{N_{0}} \ra H^{1}(\N,\mathcal{F}_{\overline{P}}^{G}(M,\psi))^{N_{0}} \ra H^{1}(\N,\mathcal{F}_{\overline{P}}^{G}(M,\psi)_{w})^{N_{0}} \ra 0.
\end{split}\end{align} 

Under the canonical identification $N= \begin{pmatrix} 1 & \qp \\ 0 & 1 \end{pmatrix}\cong \qp$, the space $\mathcal{F}_{\overline{P}}^{G}(M,\psi)(N)$ can be viewed as the subspace of $C^{\La}_{c}(\qp,W^{\prime}\otimes\psi)$ on which $\mathfrak{d}\cdot f=0$, which we denote by $C^{\La}_{c}(\qp,W^{\prime}\otimes\psi)^{\mathfrak{d}}$ \cite[Lem. 2.3.3]{Jacquet2}. The action of $X$ on $\mathcal{F}_{\overline{P}}^{G}(M,\psi)(N)$ is the derivative map $\frac{d}{dx}$. 

\begin{lemma}\label{lem:n_cohom_N_section}
For each $i$, the cohomology groups $H^{i}(\N,\mathcal{F}_{\overline{P}}^{G}(M,\psi)(N))$ and $H^{i}(\N,\mathcal{F}_{\overline{P}}^{G}(M,\psi)^{\textnormal{lp}}(N))$ coincide.
\end{lemma}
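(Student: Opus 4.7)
The plan is to analyze the short exact sequence of $\N$-representations
\[
    0 \ra \mathcal{F}_{\overline{P}}^{G}(M,\psi)^{\textnormal{lp}}(N) \ra \mathcal{F}_{\overline{P}}^{G}(M,\psi)(N) \ra Q \ra 0,
\]
and to show that $H^{i}(\N,Q)=0$ for all $i$. Since $\N = KX$ is one-dimensional, its cohomology of any module $V$ reduces to $H^{0}(\N,V)=\ker(X)$ and $H^{1}(\N,V)=V/XV$, so it suffices to prove that $X=\frac{d}{dx}$ acts bijectively on $Q$.

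Injectivity follows from a Taylor expansion argument: fix $n$ so that $f\in\mathcal{F}_{\overline{P}}^{G}(M,\psi)(N)$ is rigid analytic on each coset of $p^n\zp$, and expand $f|_{a+p^n\zp}=\sum_{i\geq 0} c_{i,a}(x-a)^{i}$. Then $Xf|_{a+p^n\zp} = \sum_{j\geq 0}(j+1)c_{j+1,a}(x-a)^{j}$, and the assumption $Xf\in\mathcal{F}_{\overline{P}}^{G}(M,\psi)^{\textnormal{lp}}(N)$ forces $c_{j+1,a}=0$ for large $j$ on each coset, so $f$ is itself locally polynomial.

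For surjectivity, given $f$ we refine the cover to $p^{n+1}$-cosets---where $f$ is overconvergent, being rigid analytic on the enclosing $p^n$-disc---and define $g|_{a+p^{n+1}\zp}=\sum_{i\geq 0}\frac{c_{i,a}}{i+1}(x-a)^{i+1}$. A convergence estimate using $v_p(i+1)\leq\log_p(i+1)$ yields $|c_{i,a}/(i+1)|\,p^{-(n+1)(i+1)}\leq C\cdot p^{-i}/|i+1|\to 0$, so $g\in C_c^{\La}(N,W^\prime\otimes\psi)$ is a well-defined compactly supported locally analytic function with $Xg = f$. The main obstacle is to show that $g$ can be chosen $\mathfrak{d}$-invariant. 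For $\mathfrak{s}=\tau\otimes w \in \mathfrak{d}$, the operator identity
\[
    X(\mathfrak{s}\cdot g) = \mathfrak{s}\cdot(Xg) + ([X,\tau]\otimes w)\cdot g = ([X,\tau]\otimes w)\cdot g
\]
holds, since $\mathfrak{s}\cdot f = 0$. Using that $\mathrm{ad}(X)$ is locally nilpotent on $U(\g)$ (as $X\in\N$ is nilpotent in the $\mathfrak{sl}_2$-setting), iteration expresses $X^k(\mathfrak{s}\cdot g)$ in terms of iterated brackets $\mathrm{ad}(X)^j\tau$ acting on $g$; for $k$ sufficiently large, the highest-order term vanishes, and a careful bookkeeping of the lower-order correction terms (using that $\mathfrak{d}$ is $U(\g)$-stable) should show that $\mathfrak{s}\cdot g$ is annihilated by a power of $X$, hence is locally polynomial in the $N$-coordinate. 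A final correction of $g$ by an element of $\mathcal{F}_{\overline{P}}^{G}(M,\psi)^{\textnormal{lp}}(N)$, which does not affect the class in $Q$, then places the corrected $g$ inside $\mathcal{F}_{\overline{P}}^{G}(M,\psi)(N)$, completing the verification of surjectivity.
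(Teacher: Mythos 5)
Your framing via the short exact sequence $0 \ra \mathcal{F}_{\overline{P}}^{G}(M,\psi)^{\textnormal{lp}}(N) \ra \mathcal{F}_{\overline{P}}^{G}(M,\psi)(N) \ra Q \ra 0$ and the reduction to showing $X$ acts bijectively on $Q$ is equivalent to the paper's reduction (since $\dim\N = 1$, only $H^0$ and $H^1$ are in play, and they are $\ker X$ and $\mathrm{coker}\,X$ respectively). Your injectivity argument — $Xf$ locally polynomial implies $f$ locally polynomial via Taylor expansion — is the same as the paper's one-line observation that the anti-derivative of a polynomial is a polynomial.

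The surjectivity step, however, has a genuine gap, and it is exactly the step the paper treats most carefully. You construct a locally analytic anti-derivative $g$ of $f$ and then propose to repair it by subtracting an element $h$ of $\mathcal{F}_{\overline{P}}^{G}(M,\psi)^{\textnormal{lp}}(N)$ so that the corrected function lies in $\mathcal{F}_{\overline{P}}^{G}(M,\psi)(N)$. But every element of $\mathcal{F}_{\overline{P}}^{G}(M,\psi)^{\textnormal{lp}}(N)$ is already annihilated by $\mathfrak{d}$ (it is a subspace of $\mathcal{F}_{\overline{P}}^{G}(M,\psi)(N)$), so $\mathfrak{s}\cdot(g-h) = \mathfrak{s}\cdot g$ for all $\mathfrak{s}\in\mathfrak{d}$: subtracting such an $h$ cannot make $g$ become $\mathfrak{d}$-invariant. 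The correction must be by a locally polynomial $W'\otimes\psi$-valued function that is \emph{not} required to be $\mathfrak{d}$-invariant, and then one needs to solve the simultaneous interpolation problem $\mathfrak{s}\cdot h = \mathfrak{s}\cdot g$ for all generators $\mathfrak{s}$ of $\mathfrak{d}$. Nothing in your sketch addresses whether such an $h$ exists, and this is the heart of the matter. (As a side remark, the $\mathrm{ad}(X)$-bracket bookkeeping is unnecessary: writing $\mathfrak{s}$ in the PBW basis $U(\N)\otimes_K W$ gives $\tau$ a polynomial in $X$, so $[X,\tau]=0$ and $\mathfrak{s}\cdot g$ is immediately seen to be locally \emph{constant}.)

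The paper takes a different, more concrete path at this point. It first shows that the ambient space $C_{c}^{\La}(\qp,W'\otimes\psi)$ has vanishing $\N$-coinvariants, so formal anti-derivatives exist, and then analyzes the constraint $\mathfrak{s}\cdot f = 0$ directly: by projecting $W'$ onto $\widehat{v}^\perp \oplus K\widehat{v}$ for a vector $v \in W$ appearing in a generator $X^n\otimes v$ of $\mathfrak{d}$, the equation $(X^n\otimes v)\cdot f = 0$ forces the $\widehat{v}$-component of $f$ to be a polynomial of degree $\leq n-1$. Running this over a finite generating set of $\mathfrak{d}$ (finitely generated since $U(\g)$ is Noetherian) yields the decomposition $f = f_1 + f_2$ with $f_1$ locally polynomial and $\mathfrak{d}$-invariant and $f_2 \in \N\, C_{c}^{\La}(\qp,W'\otimes\psi)^{\mathfrak{d}}$. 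Your proposal would need to be reworked along these lines (or by some other concrete interpolation) to close the gap.
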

\begin{proof}
Since $\N$ is one dimensional, it suffices to check this for $i=0,1$. The action of $\N$ is differentiation, so the $0$-th cohomology consists of locally constant functions, which are locally polynomials. 

For a $\N$-module $V$, let $V_{\N}$ denote the space of $\N$-coinvariants $V/\N V$. Since $\N$ is one dimensional, \begin{equation}\label{eq:n_cohom_formula}
    H^{1}(\N,V)\cong V_{\N}\otimes \N^{\star}.
\end{equation} To show that the first $\N$-cohomology coincides, it suffices to show that the natural map \begin{equation*}
    \mathcal{F}_{\overline{P}}^{G}(M,\psi)^{\textnormal{lp}}(N))_{\N} \inj \mathcal{F}_{\overline{P}}^{G}(M,\psi)(N)_{\N}
\end{equation*} is an isomorphism. Injectivity is clear, as the anti-derivative of a polynomial is a polynomial. For surjectivity, it suffices to show that any element of $(C^{\La}_{c}(\qp,W^{\prime}\otimes\psi)^{\mathfrak{d}})_{\N}$ has a locally polynomial representative.

Suppose $f$ is an analytic function from a compact open subset $\Omega\sub N$ to $K$. Then there is a finite compact open cover $(\Omega_{i})_{i=1}^{\ell}$ of $\Omega$ on which the formal integral of $f$ converges on each $\Omega_{i}$. This shows that \[
    C^{\La}_{c}(\qp,W^{\prime}\otimes\psi)_{\N} := \frac{C^{\La}_{c}(\qp,W^{\prime}\otimes\psi)}{\N C^{\La}_{c}(\qp,W^{\prime}\otimes\psi)} = 0.
\] 

Now suppose $f\in C^{\La}_{c}(\qp,W^{\prime}\otimes\psi)^{\mathfrak{d}}$, and suppose $h\in C^{\La}_{c}(\qp,W^{\prime}\otimes\psi)$ is the formal integral of $f$, that is $f=Xh$. Assume $f$ is non-trivial in $(C^{\La}_{c}(\qp,W^{\prime}\otimes\psi)^{\mathfrak{d}})_{\N}$, that is, $\mathfrak{d}\cdot h\neq 0$. For a vector $v\in W$, let $\widehat{v}^{\perp}$ be the kernel of the evaluation at $v$ map on $W^{\prime}$. Let $\widehat{v}$ be a basis element of the complement of $\widehat{v}^{\perp}$, so that $W^{\prime}=\widehat{v}^{\perp}\oplus K\widehat{v}$. The projections of $W^{\prime}$ onto $K\widehat{v}$ and $\widehat{v}^{\perp}$ induces projection of $C^{\La}_{c}(\qp,W^{\prime}\otimes\psi)$ onto $C^{\La}_{c}(\qp,K\widehat{v}\otimes \psi)$ and $C^{\La}_{c}(\qp,\widehat{v}^\perp\otimes\psi)$ respectively. Let $f_{\widehat{v}}$ and $f_{\widehat{v}^{\perp}}$ be the image of $f$ under the respective projections, and similarly for $h_{\widehat{v}}$ and $h_{\widehat{v}^{\perp}}$. 

By definition, the action of $X^{n}\otimes v \in\mathfrak{d}\sub U(\N)\otimes_{K}W$ acts on $f$ by the formula \[
    ((X^{n}\otimes v)\cdot f )(x) = (-1)^{n}\left(\frac{d^n f}{dx^{n}}(x)\right)(v).
\] It is easy to see that $(X^{n}\otimes v)\cdot f_{\widehat{v}^{\perp}} $ and $(X^{n}\otimes v)\cdot h_{\widehat{v}^{\perp}}$ are zero. By assumption, $(X^{n}\otimes v)\cdot f_{\widehat{v}}=0$ and $(X^{n}\otimes v)\cdot h_{\widehat{v}} \neq 0$. Since $f_{\widehat{v}}$ is simply the derivative of $h_{\widehat{v}}$, we see that $(1\otimes v)f_{\widehat{v}}$ must be a polynomial of degree $n-1$ in $C^{\La}_{c}(\qp,K\widehat{v}\otimes \psi)$. 

Since $U(\g)$ is Noetherian, $M$ is finitely generated (and hence finitely presented), and so $\mathfrak{d}$ is also finitely generated. By repeating the above argument on the generators of $\mathfrak{d}$, we can decompose $f=f_1+f_2$ where $f_1\in  C^{\textnormal{lp}}(\qp,W^{\prime}\otimes\psi)^{\mathfrak{d}}$ and $f_{2}\in \N  C^{\La}_{c}(\qp,W^{\prime}\otimes\psi)^{\mathfrak{d}}$. This shows that $f$ has a locally polynomial representative in $C^{\textnormal{lp}}(\qp,W^{\prime}\otimes\psi)^{\mathfrak{d}}_{\N}$ as required. 
\end{proof}

We prove a similar result for the stalk at $w$. By \cite[Lem. 2.3.5]{Jacquet2}, right translation by $w$ induces a topological isomorphism \begin{equation}\label{eq:change_of_section}
    (\ind_{\overline{P}}^{G}W^{\prime}\otimes\psi)_{w} \cong (\ind_{\overline{P}}^{G}W^{\prime}\otimes\psi)_{e}
\end{equation} where $e$ is the identity element of $N$. The isomorphism interpolates the $(\g,P)$-action on the source with the $(\textnormal{Ad}_{w}(\g),\overline{P})$-action on the target. Therefore, there is a $K$-linear isomorphism \[
    H^{i}(\N,(\ind_{\overline{P}}^{G}W^{\prime}\otimes\psi)_{w}) \cong H^{i}(\overline{\N},\ind_{\overline{P}}^{G}W^{\prime}\otimes\psi)_{e})
\] for all $i$, and similarly for the cohomology groups of $\mathcal{F}_{\overline{P}}^{G}(M,\psi)_{w}$. 

\begin{lemma}
For each $i$, the cohomology groups $H^{i}(\overline{\N},\mathcal{F}_{\overline{P}}^{G}(M,\psi)_{e})$ and $H^{i}(\overline{\N},\mathcal{F}_{\overline{P}}^{G}(M,\psi)^{\textnormal{pol}}_{e})$ coincide.
\end{lemma}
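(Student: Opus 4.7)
The plan is to mirror the proof of Lemma \ref{lem:n_cohom_N_section}, adapting the argument from the global $N$-section to the germ at $e$. First I would use the $w$-translation isomorphism (\ref{eq:change_of_section}), which intertwines the $\overline{\N}$-action on $\F_{\overline{P}}^{G}(M,\psi)_{e}$ with the $\N$-action on $\F_{\overline{P}}^{G}(M,\psi)_{w}$, reducing the statement to the equivalent claim $H^{i}(\N,\F_{\overline{P}}^{G}(M,\psi)_{w}) \cong H^{i}(\N,\F_{\overline{P}}^{G}(M,\psi)^{\pol}_{w})$. Parameterizing germs at $w$ via the open Bruhat cell $\overline{P}wN$ with coordinate $s$, the stalk $\F_{\overline{P}}^{G}(M,\psi)_{w}$ becomes the space of germs at $s=0$ of $W^{\prime}\otimes\psi$-valued locally analytic functions subject to the $\mathfrak{d}$-constraint, the generator $X$ of $\N$ acts (up to sign) as $\frac{d}{ds}$, and $\F_{\overline{P}}^{G}(M,\psi)^{\pol}_{w}$ is identified with those germs that are genuinely polynomial in $s$.

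Since $\N$ is one-dimensional, only $i=0$ and $i=1$ require treatment. For $i=0$, the $\N$-invariants are germs killed by $\frac{d}{ds}$, namely the constant germs, all of which lie in $\F^{\pol}_{w}$. For $i=1$ I would invoke the formula $H^{1}(\N,V)\cong V_{\N}\otimes\N^{\star}$ and reduce to showing that the natural map
\[
    \F_{\overline{P}}^{G}(M,\psi)^{\pol}_{w}/X\F_{\overline{P}}^{G}(M,\psi)^{\pol}_{w} \longrightarrow \F_{\overline{P}}^{G}(M,\psi)_{w}/X\F_{\overline{P}}^{G}(M,\psi)_{w}
\]
is an isomorphism. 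Injectivity is immediate, since any polynomial germ that equals $Xh$ for some $h\in\F_{w}$ has a polynomial antiderivative up to a constant, itself polynomial.

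Surjectivity is the substantive step. Given $f\in\F_{\overline{P}}^{G}(M,\psi)_{w}$, I would form its formal antiderivative $h$ in the ambient stalk $(\ind_{\overline{P}}^{G}W^{\prime}\otimes\psi)_{w}$; this exists as a locally analytic germ since antiderivation preserves the disc of convergence. The hard part is that $h$ may fail the $\mathfrak{d}$-constraint and therefore need not lie in $\F_{w}$. I would resolve this exactly as in Lemma \ref{lem:n_cohom_N_section}: since $M\in\o^{\overline{\mathfrak{p}}}$ is finitely generated and $U(\g)$ is Noetherian, $\mathfrak{d}$ admits a finite generating set; for each generator of the form $X^{n}\otimes v$, projecting $W^{\prime}=K\widehat{v}\oplus\widehat{v}^{\perp}$, the constraint $(X^{n}\otimes v)\cdot h=0$ forces the $\widehat{v}$-component of $h$ to have vanishing $n$th derivative and hence to be a polynomial of degree less than $n$; consequently the corresponding component of $f=Xh$ is itself polynomial. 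Iterating across the finite generating set produces a decomposition $f=f_{1}+Xh_{2}$ with $f_{1}\in\F^{\pol}_{w}$ and $h_{2}\in\F_{w}$, yielding surjectivity. The main obstacle throughout is precisely this $\mathfrak{d}$-compatibility of the antiderivative, and it is controlled entirely by the finite-generation argument on $\mathfrak{d}$.
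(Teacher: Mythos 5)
Your proof has a fundamental gap at the very first step. You claim that germs at $w$ are parameterized via the "open Bruhat cell $\overline{P}wN$" with coordinate $s\in N$, and that $X\in\N$ then acts as $\frac{d}{ds}$. Neither claim is correct. Since $wNw^{-1}=\overline{N}\subseteq\overline{P}$, the group $N$ stabilizes the coset $\overline{P}w$ under right multiplication, so $\overline{P}wN=\overline{P}w$ is a single point of $\overline{P}\backslash G$, not an open cell. The open cell around $w$ is $\overline{P}w\overline{N}$, with local coordinate $\bar{s}\in\overline{N}$. In that coordinate the operator that acts by $\frac{d}{d\bar{s}}$ is $\overline{\N}$, not $\N$: for $\bar{n},\exp(tY)\in\overline{N}$ one has $\bar{n}\exp(tY)\in\overline{N}$, so the $\overline{\N}$-action is translation in the coordinate, but $\bar{n}\exp(tX)$ forces a nontrivial Iwahori decomposition. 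Indeed, writing $\bar{f}(\bar{s})=f(w\bar{s})$ and carrying out the same $2\times 2$ matrix computation as in the paper's proof, one finds $(X\bar{f})(\bar{s}) = \bar{s}H\bar{f}(\bar{s}) - Y\bar{f}(\bar{s}) - \bar{s}^2\bar{f}'(\bar{s})$ — precisely the same shape as the formula for $Y$ acting on the stalk at $e$, as the $w$-translation isomorphism dictates. This is the whole content of the lemma: the cohomology you need is with respect to the Lie algebra that moves transversally to the cell, and its action on the stalk is genuinely not differentiation.

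Because of this the rest of your argument does not apply. The antiderivative trick and the finite-generation argument on $\mathfrak{d}$ from Lemma~\ref{lem:n_cohom_N_section} work precisely because there the action of $X$ is $\frac{d}{dx}$ on $C^{\La}_c$. With the actual operator $\bar{s}H - Y - \bar{s}^2\frac{d}{d\bar{s}}$ you cannot form a "formal antiderivative," and the degree bookkeeping you invoke has no analogue. The paper's proof of this lemma is consequently quite different from the earlier one: it works with the explicit operator $(Yf)(x)=xHf(x)+Yf(x)-x^2f'(x)$ directly, exploits the semisimplicity and algebraicity of the $H$-action, and runs a descending induction along the Jordan–Hölder filtration $M=M_r\supsetneq\cdots\supsetneq M_0=0$ in $\o^{\overline{\mathfrak{p}}}_{\alg}$ (with $YM_i\subseteq M_{i-1}$), establishing in the claim inside the proof that monomials of sufficiently high degree lie in the image of $Y$. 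You would need to rebuild your argument around this structure; a straight transport of Lemma~\ref{lem:n_cohom_N_section} is not available.
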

\begin{proof}
Suppose $\Omega\sub N$ is a compact open neighbourhood of $e$. Since \[
    \begin{pmatrix}
    1 & x \\ 0 & 1
    \end{pmatrix} \begin{pmatrix}
    1 & 0 \\ t & 1
    \end{pmatrix} = \begin{pmatrix}
    1+xt & x \\ t & 1
    \end{pmatrix} = \begin{pmatrix}
    1+xt & 0 \\ 0 & (1+xt)^{-1}
    \end{pmatrix} \begin{pmatrix}
    1 & 0 \\ t(1+xt) & 1
    \end{pmatrix} \begin{pmatrix}
    1 & \frac{x}{1+xt} \\ 0 & 1
    \end{pmatrix}, 
\] under the identification $N\cong \qp$, the action of $Y$ on $C^{\La}(\Omega,W^{\prime}_{i}\otimes\psi)$ is given by the formula \begin{equation}\label{eq:Y_action_formula}
    (Yf)(x) = xHf(x) + Yf(x) - x^{2}f^{\prime}(x). 
\end{equation} 
By the definition of $M$, the action of $H$ on $M$ is semisimple and algebraic. Additionally, there is a Jordan-Holder series in $\o_{\alg}^{\overline{\mathfrak{p}}}$ \[
    M= M_{r} \supsetneq M_{r-1} \supsetneq ... M_{0} = 0,
\] where $YM_{i} \sub M_{i-1}$ for all $i\geq 1$. For each $i$, let $W_{i}=W\cap M_{i}$. This induces a filtration on $W^{\prime}$, where $YW^{\prime}_{i}\sub W^{\prime}_{i+1}$ for all $i\geq 1$, with $W^{\prime}_{r+1}$ taken to be zero. In fact, $Y\ker(W_{i}^{\prime}\ra W_{i-1}^{\prime}) \sub \ker(W_{i+1}^{\prime}\ra W_{i}^{\prime})$.

Let $p(x)\in C^{\La}(\Omega,\psi)$ and $v\in W^{\prime}_{i}$. Suppose $Hv=cv$ for some $c\in \z$. Then the action of $Y$ on $p(x)\otimes v\in C^{\La}(\Omega,\psi)\otimes_{K}W^{\prime}_{i}= C^{\La}(\Omega,W^{\prime}_{i}\otimes\psi) $ is given by \[
    Y(p(x)\otimes v)= (cxp(x)-x^2p^{\prime}(x))\otimes v + p(x)\otimes Yv.
\] This is zero if and only if $p(x)=x^c$ and $v\in \ker(W^{\prime}\ra W_{r-1}^{\prime})$. This holds true for all $\Omega$, thus \begin{align*}
    C^{\La}(N,W^{\prime}\otimes\psi)^{Y}_{e} &= C^{\textnormal{pol}}(N,W^{\prime}\otimes\psi)^{Y}_{e}.
\end{align*} The same holds true when restricting to the subspace of elements annihilated by $\mathfrak{d}$, which proves the claim for the $0$-th cohomology group. 

\begin{claim}
Let $\Omega$ be a compact open neighbourhood of $N$ and for each $i$ let $C_{i}=\ker(W^{\prime}\ra W_{i-1}^{\prime})$. Then for all $1\leq i \leq r$, there is a positive integer $n_{i}$ such that $p(x)\otimes v \in C^{\La}(\Omega,C_{i}\otimes\psi)$ is in $YC^{\La}(\Omega,C_{i}\otimes\psi)$ if all monomials of $p(x)$ are of degree greater than $n_{i}$. 
\end{claim}
\begin{proof}
We induct backwards on $i$. Starting from $i=r$, since $Y\ker(W^{\prime}\ra W_{r-1}^{\prime})=0$, \[
    Y(x^n\otimes v) = (c_{r}-n)x^{n+1}\otimes v,
\] where $v\in C_{r}$ and $Hv=c_{r}$. Therefore, $YC^{\La}(\Omega,C_{r}\otimes\psi)$ is the complement of the span of the monomials $x^{c+1}\otimes v$ and $x^{0}\otimes v$. This establishes the claim in the base case. 

Assume the claim is true for $i$. Suppose $n>n_{i}$, $v\in C_{i-1}$ and $Hv=c_{r-1}v$, then \[
    Y(x^n\otimes v) = (c_{r-1}-n)x^{n+1}\otimes v + x^{n}\otimes Yv.
\] By assumption, $x^{n}\otimes Yv\in C^{\La}(\Omega,C_{i}\otimes\psi)$. Hence, $x^{n}\otimes v\in C^{\La}(\Omega,C_{i-1}\otimes\psi)$ for all $n > \max(n_{i},c_{r-1}+1)$ and the claim follows. 
\end{proof}

Applying the claim to $i=1$, with $C_{1}=W^{\prime}$, we deduce that every element of $C^{\La}(\Omega,W^{\prime}\otimes\psi)_{\overline{\N}}$ has a coset representative in $C^{\textnormal{pol}}(\Omega,W^{\prime}\otimes\psi)$. The same is true for $(C^{\La}(N,W^{\prime}\otimes\psi)_{e})_{\overline{\N}}$. A similar argument as the one used at end of the proof of Lemma \ref{lem:n_cohom_N_section} allow us to deduce the same result on the subspace of elements annihilated by $\mathfrak{d}$. That is, every element of $\mathcal{F}_{\overline{P}}^{G}(M,\psi)_{e}$ has a coset representative in $\mathcal{F}_{\overline{P}}^{G}(M,\psi)^{\textnormal{pol}}_{e}$. This completes the proof of the lemma. 
\end{proof}

Define an action of $U(\g)$ on $\hom_{K}(M,K)$ via the formula \[
    (\tau \cdot f)(m)=f(\dot{\tau}m) \textnormal{ for all } \tau \in U(\g) \textnormal{ and } m\in M,
\] where $\tau\map \dot{\tau}$ is the involution on $U(\g)$ induced by the multiplication by $-1$ map on $\g$. Let $\hom_{K}(M,K)^{\N^{\infty}}$ denote the subspace of $\hom_{K}(M,K)$ consisting of functions annihilated by a finite power of $\N$.  From the proof of \cite[Prop. 4.2]{Breuil_socle2}, there are $\g$-equivariant isomorphisms \begin{align}\begin{split}\label{eq:Breuil_N-functions}
    \mathcal{F}_{\overline{P}}^{G}(M,\psi)^{\textnormal{lp}}(N) &= \hom_{K}(M,K)^{\N^{\infty}}\otimes_{K}C_{c}^{\text{sm}}(N,\psi) \textnormal{ and} \\
    \mathcal{F}_{\overline{P}}^{G}(M,\psi)^{\textnormal{pol}}_{e} &\cong \hom_{K}(M,K)^{\N^{\infty}}\otimes_{K}\psi.
\end{split}\end{align} 
For each $m\in M$, let $\widehat{m}$ denote the dual of $m$. Since $m$ is $\overline{\N}$-finite, each $\widehat{m}$ is $\N$-finite and are elements of $\hom_{K}(M,K)^{\N^{\infty}}$. It is clear that these dual elements generate the space $\hom_{K}(M,K)^{\N^{\infty}}$. Computing the $\N$-cohomology of this space is then much simpler, and makes no references to $W$. This result also holds when $G=GL_{2}(\qp)$. 

In summary, we can compute the derived Jacquet-Emerton modules of Orlich-Strauch representations through their locally polynomial parts. This can be seen as a generalization of the role that $I_{\overline{P}}^{G}$ plays for $J_P$ \cite{Jacquet2}. A similar result should also hold for general $p$-adic reductive groups. 

We finish this section with some explicit calculations of the derived Jacquet-Emerton modules of the Orlik-Strauch representations introduced at the beginning of this section.

\begin{example}\label{ex:Jacquet_indchi} We first consider the case $M=M(-k)$. Let $e_{0}$ be a basis vector of $K(\chi_{-k})$ and let $e_{i}=X^{i}e_{0}$. For each $i$, let $\widehat{e}_{i}$ be the dual basis vector to $e_{i}$. A simple computation shows that each $\widehat{e}_{i}$ has weight $k-2i$ and \[
    X\widehat{e}_{i} = -\widehat{e}_{i-1}
\] with $\widehat{e}_{-1}$ taken to be zero. Then \begin{align*}
    H^{0}(\N,(\ind_{\overline{P}}^{G}\chi_{k}\psi)(N)) &\cong \widehat{e}_{0} \otimes C_{c}^{\text{sm}}(N,\psi) \text{ and}\\
    H^{1}(\N,(\ind_{\overline{P}}^{G}\chi_{k}\psi)(N)) &\cong 0.
\end{align*} The action of $N_{0}$ is via translation by $\zp$. The $N_{0}$-invariant element of $C_{c}^{\text{sm}}(N,\psi)$ is exactly the constant $K(\psi)$-valued functions on $\zp$, where $Z_{T}^{+}$ acts via the character $\psi\delta_{P}$. Therefore, as $Z_{T}^{+}$-representations \begin{align*}
    H^{0}(\N,(\ind_{\overline{P}}^{G}\chi_{k}\psi)(N))^{N_{0}} &\cong \chi_{k}\psi\delta_{P} \textnormal{ and}\\
    H^{1}(\N,(\ind_{\overline{P}}^{G}\chi_{k}\psi)(N))^{N_{0}} &\cong 0.
\end{align*}

On the other hand, since $\left[Y,X^{i}\right]=i(i-1)X^{i-1}-iHX^{i-1}$, \begin{align*}
    Ye_{i} &= i(i-1)e_{i-1}-i(-k+2(i-1))e_{i-1} = i(k-(i-1))e_{i-1} \textnormal{ and} \\
    Y\widehat{e}_{i} &= -(i+1)(k-i))\widehat{e}_{i+1}.
\end{align*} Therefore, \begin{align*}
    H^{0}(\overline{\N},(\ind_{\overline{P}}^{G}\chi_{k}\psi)_{e}) &\cong \begin{cases}
    \widehat{e}_{k}\otimes_{K}\psi & \text{if } k \geq 0 \\
    0 & \text{else}
    \end{cases},  \text{ and}\\
    H^{1}(\overline{\N},(\ind_{\overline{P}}^{G}\chi_{k}\psi)_{e}) &\cong \begin{cases}
    (\widehat{e}_{0}\oplus \widehat{e}_{k+1})\otimes_{K}\psi & \text{if } k \geq 0 \\
    \widehat{e}_{0}\otimes_{K}\psi & \text{else}
    \end{cases}. 
\end{align*} Since $\overline{\N}$ is unipotent, the exponential map $\exp: \overline{\N}\ra\overline{N}$ is an isomorphism. The above cohomology groups are finite dimensional, so the action of $\overline{\N}$ on these groups comes from the exponential of the $\overline{\N}$-action. The lie algebra $\overline{\N}$ annihilates these spaces, so the $\overline{N}$-action must be trivial. In particular, $\overline{N}_{0}=wN_{0}w^{-1}$ must also act trivially on this space. To summarize, taking into account of the interpolation of the $P$-action with the $\overline{P}$-action of (\ref{eq:change_of_section}) as well as (\ref{eq:n_cohom_formula}), we find that as $T$-representations \begin{align*}
    H^{0}(\overline{\N},(\ind_{\overline{P}}^{G}\chi_{k}\psi)_{e})^{\overline{N}_{0}} &\cong \begin{cases}
    \chi_{k}\psi^{w} & \text{if } k \geq 0 \\
    0 & \text{else}
    \end{cases}, \text{ and} \\
    H^{1}(\overline{\N},(\ind_{\overline{P}}^{G}\chi_{k}\psi)_{e})^{\overline{N}_{0}} &\cong \begin{cases}
    (\chi_{-(k+2)}\oplus \chi_{k})\otimes_{K}\psi^{w} & \text{if } k \geq 0 \\
    \chi_{-(k+2)}\psi^{w} & \text{else}
    \end{cases}. 
\end{align*} Here, $\psi^{w}$ is the smooth character of $T$ defined by $\psi^{w}(x)=\psi(w^{-1}xw)$.

From (\ref{eq:n_cohom1}), we deduce that as $Z_{T}^{+}$-representations: \begin{equation}\label{eq:DJp_Indchi}
    H^{i}(\N,(\ind_{\overline{P}}^{G}\chi_{k}\psi))^{\overline{N}_{0}} \cong \begin{cases}
    \textnormal{a class in } \Ext^{1}(\chi_{k}\psi^{w},\chi_{k}\psi\delta_{P}) & \text{if } k \geq 0, i=0 \\
    (\chi_{-(k+2)}\oplus \chi_{k})\otimes_{K}\psi^{w} & \text{if } k \geq 0, i=1  \\
    \chi_{k}\psi\delta_{P} & \text{if } k < 0, i=0 \\
    \chi_{-(k+2)}\psi^{w} & \text{if } k < 0, i=1 
    \end{cases}.  
\end{equation} In all of these cases, the action of $Z_{T}^{+}$ is invertible. Therefore, they coincide with their finite-slope parts. Hence, these are also the derived Jacquet-Emerton modules $H^{i}(J_{P}(\ind_{\overline{P}}^{G}\chi_{k}\psi))$. The reader can compare this result to the computation of section 5 of \cite{Jacquet2}.
\end{example} 

\begin{example}\label{ex:L(k)} Suppose $k\geq 0$. In the beginning of this section, we see that $\mathcal{F}_{\overline{P}}^{G}(L(-k),\psi)\cong L(-k)\otimes_{K}\textnormal{sm-Ind}_{\overline{P}}^{G}\psi$. We can compute its derived Jacquet-Emerton modules in the same manner as above with $M=L(-k)$. Here is an alternative approach. Since $\mathcal{F}_{\overline{P}}^{G}(M,\psi)$ coincides with the representation $I_{\overline{P}}^{G}(\chi_{k}\psi)$ defined in \cite{Jacquet2}, $J_{P}(I_{\overline{P}}^{G}(\chi_{k}\psi))$ is equal to $J_{P}(\ind_{\overline{P}}^{G}\chi_{k}\psi)$, which by the previous example, is an extension of $\chi_{k}\psi^{w}$ by $\chi_{k}\psi\delta_{P}$ (or see example 5.1.9 of \cite{Jacquet2}). Kostant's theorem \cite[VI Thm 6.12]{Knapp} implies that $H^{1}(\N,\textnormal{sm-}\ind_{\overline{P}}^{G}\psi))^{N_{0}}$ is isomorphic to $(\chi_{-(k+2)} \otimes_{K} \textnormal{sm-} \ind_{\overline{P}}^{G} \psi)^{N_{0}}$. Theorem \ref{thm:lie_cohom}, Proposition  4.3.5 of \cite{Jacquet2} and Theorem 6.3.5 of \cite{Casselman} combine to imply that $H^{1}J_{P}(\textnormal{sm-}\ind_{\overline{P}}^{G}\psi))\cong \chi_{\lambda^{\prime}} \otimes_{K} (\textnormal{sm-} \ind_{\overline{P}}^{G} \psi)_{N}$ has Jordan-Holder factors $\chi_{-(k+2)}\psi\delta_{P}$ and $\chi_{-(k+2)}\psi^{w}$. This result also agrees with the long exact sequence obtained from applying the Jacquet-Emerton functor to the short exact sequence (\ref{eq:loc_alg->ind->neg_wt}) and applying the results of the previous example.
\end{example}

\begin{example}
Suppose $k\geq 0$. In this example, we compute the derived Jacquet-Emerton modules of $\mathcal{F}_{\overline{P}}^{G}(M(-k)^{\vee},\psi)$, where $(\cdot)^{\vee}$ denotes the duality functor in category $\o$ (see \cite{Humphreys}). 

We can describe $M(-k)^{\vee}$ in the following way. It has a basis $e_{0},e_{1},...$ where $e_{i}$ has weight $-k+2i$, $Ye_{i}=e_{i-1}$ and $Xe_{i}=(i+1)(-k+i)e_{i+1}$. For each $i$, let $\widehat{e}_{i}$ be the dual basis of $e_{i}$. By definition, $\widehat{e}_{i}$ has weight $k-2i$, $X\widehat{e}_{i}=-i(-k+(i-1))\widehat{e}_{i-1}$ and $Y\widehat{e}_{i}=-\widehat{e}_{i+1}$. A simple computation shows that
\begin{align*}
    H^{0}(\N,\mathcal{F}_{\overline{P}}^{G}(M(-k)^{\vee},\psi)(N)) &\cong (\widehat{e}_{0}\oplus\widehat{e}_{k+1}) \otimes C_{c}^{\text{sm}}(N,\psi) \textnormal{ and} \\
    H^{1}(\N,\mathcal{F}_{\overline{P}}^{G}(M(-k)^{\vee},\psi)(N)) &\cong \widehat{e}_{k} \otimes C_{c}^{\text{sm}}(N,\psi).
\end{align*} The $N_{0}$-invariant elements are simply the constant functions on $N_{0}$. Taking (\ref{eq:n_cohom_formula}) into account, as $Z_{T}^{+}$-representations \begin{align*}
    H^{0}(\N,\mathcal{F}_{\overline{P}}^{G}(M(-k)^{\vee},\psi)(N))^{N_{0}} &\cong (\chi_{k}\oplus\chi_{-(k+2)}) \psi\delta_{P} \textnormal{ and} \\
    H^{1}(\N,\mathcal{F}_{\overline{P}}^{G}(M(-k)^{\vee},\psi)(N))^{N_{0}} &\cong \chi_{-(k+2)} \psi\delta_{P}.
\end{align*}

Furthermore, \begin{align*}
    H^{0}(\overline{\N},\mathcal{F}_{\overline{P}}^{G}(M(-k)^{\vee},\psi)_{e}) &= 0, \textnormal{ and} \\
    H^{1}(\overline{\N},\mathcal{F}_{\overline{P}}^{G}(M(-k)^{\vee},\psi)_{e}) &\cong \widehat{e}_{0} \psi.
\end{align*} By the same argument as in example \ref{ex:Jacquet_indchi}, the group $\overline{N}_{0}$ acts trivially. Taking into account of (\ref{eq:change_of_section}) and (\ref{eq:n_cohom_formula}), as $T$-representations \begin{align*}
    H^{0}(\N,\mathcal{F}_{\overline{P}}^{G}(M(-k)^{\vee},\psi)_{w})^{N_{0}} &= 0 \textnormal{ and} \\
    H^{1}(\N,\mathcal{F}_{\overline{P}}^{G}(M(-k)^{\vee},\psi)_{w})^{N_{0}} &\cong \chi_{-(k+2)}\psi^{w}.
\end{align*}

From (\ref{eq:n_cohom1}), we deduce that as $Z_{T}^{+}$-representations: \begin{align}\begin{split}
    H^{0}(\N,\mathcal{F}_{\overline{P}}^{G}(M(-k)^{\vee},\psi))^{\overline{N}_{0}} & \cong 
    (\chi_{k}\oplus\chi_{-(k+2)}) \psi\delta_{P} \textnormal{ and} \\
    H^{1}(\N,\mathcal{F}_{\overline{P}}^{G}(M(-k)^{\vee},\psi))^{\overline{N}_{0}} &\in \Ext^{1}(\chi_{-(k+2)}\psi^{w},\chi_{-(k+2)} \psi\delta_{P}).  
\end{split}\end{align} In all of these cases, the action of $Z_{T}^{+}$ is invertible. Therefore, they coincide with their finite-slope parts. Hence, these are also the derived Jacquet-Emerton modules $H^{i}(J_{P}(\mathcal{F}_{\overline{P}}^{G}(M(-k)^{\vee},\psi)))$. 

\end{example}
\begin{remark}
There is a short exact sequence \[
    0 \ra L(-k) \ra M(-k)^{\vee} \ra M(k+2) \ra 0.
\] Applying the Orlik-Strauch functor $\mathcal{F}_{\overline{P}}^{G}(\cdot,\psi)$, we deduce that there is a short exact sequence \[
    0\ra \ind_{\overline{P}}^{G}\chi_{-(k+2)}\psi \ra \mathcal{F}_{\overline{P}}^{G}(M(-k)^{\vee},\psi) \ra L(-k)\otimes_{K}\textnormal{sm-Ind}_{\overline{P}}^{G}\psi \ra 0.
\] Applying the Jacquet-Emerton functor gives us a long exact sequence of $T$-modules, which is consistent with the results of the examples above.
\end{remark}


\vspace{3mm}
\section{Applications and Conjectures}\label{sec:applications}

We use the notation established at the end of section \ref{sec:intro}. Let $U$ be an allowable object of $\rep_{\text{la,c}}^{z}(M)$ and let $V$ be a very strongly admissible $G$-representation. Theorem 0.13 of \cite{Jacquet2} states that there is an isomorphism \begin{equation}\label{eq:Em_adjunction}
    \l_{G}(I_{\overline{P}}^{G}(U),V)\cong \l_{M}(U(\delta_{P}),J_{P}(V))^{\bal}.
\end{equation} From this adjunction formula, one may expect that there is a spectral sequence of the form \[
    \Ext^{i}_{M}(U(\delta_{P}),R^{j}J_{P}(V)) \implies \Ext^{i+j}_{G}(I_{\overline{P}}^{G}(U),V),
\] especially in the case where $M$-equivariant linear maps from $U(\delta_{P})$ to $J_{P}(V)$ are automatically balanced. Here, $R^{j}J_{P}(\cdot)$ is the universal derived Jacquet-Emerton functor. The edge morphisms associated to the spectral sequence give rise to an exact sequence \begin{align}\begin{split}\label{eq:edge_morph}
0 & \ra \Ext^{1}_{M}(U(\delta_{P}),J_{P}(V)) \ra \Ext^{1}_{G}(I_{\overline{P}}^{G}(U),V) \ra \l_{M}(U(\delta_{P}),R^{1}J_{P}(V)) \\
& \ra \Ext^{2}_{M}(U(\delta_{P}),V) \ra \Ext^{2}_{G}(I_{\overline{P}}^{G}(U),V) \ra ...
\end{split}.\end{align} 

We do not know if the $\delta$-functor $H^{\star}J_P$ is universal; however, there is an injection of natural transformations $R^{\star}J_P\inj H^\star J_{P}$. Thus, if we replace $R^\star J_P$ with $H^\star J_P$ in (\ref{eq:edge_morph}), then only the first row will remain exact. We conjecture that under certain suitable assumptions, there is a short exact sequence of the form \begin{align}\label{eq:edge_morph2}
0 \ra \Ext^{1}_{M}(U(\delta_{P}),J_{P}(V)) \overset{\alpha}{\ra} \Ext^{1}_{G}(I_{\overline{P}}^{G}(U),V) \overset{\beta}{\ra} \l_{M}(U(\delta_{P}),H^{1}J_{P}(V)).
\end{align} Some similar exact sequences could potentially be derived from the adjunction formulas of \cite[Rem. 4.4]{Breuil_socle2} or \cite[Thm. A, Thm. B]{Bergall_Chojecki} as well.

We close this section with some results on various weaker versions of (\ref{eq:edge_morph2}). The common theme is that we need some additional assumptions to ensure certain maps between allowable objects of $\rep_{\text{la,c}}^{z}(M)$ are balanced. 

\begin{proposition}\label{prop:inj_beta}
Suppose $\Ext^{1}_{M}(U(\delta_{P}),J_{P}(V))=0$, $J_P(I_{\overline{P}}^{G}(U))=U(\delta_{P})$ and $U(\g)\otimes_{U(\mathfrak{p})} U$ is irreducible. Then applying the Jacquet-Emerton functor $J_P$ induces an injective map \[
    \Ext^{1}_{G}(I_{\overline{P}}^{G}(U),V) \inj  \l_{M}(U(\delta_{P}),H^{1}J_{P}(V)).
\]
\end{proposition}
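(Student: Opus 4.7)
The plan is to realize $\beta$ as the connecting homomorphism of the $\delta$-functor $H^\star J_P$ constructed in Section \ref{sec:derived} and to use the two vanishing hypotheses to force any class in its kernel to split. Given a representative strict short exact sequence $0 \to V \to E \overset{q}{\to} I_{\overline{P}}^G(U) \to 0$ of a class in $\Ext^1_G(I_{\overline{P}}^G(U), V)$, applying the $\delta$-functor yields a long exact sequence
\begin{equation*}
    0 \to J_P(V) \to J_P(E) \overset{J_P(q)}{\to} J_P(I_{\overline{P}}^G(U)) \overset{\partial}{\to} H^1 J_P(V) \to H^1 J_P(E) \to \cdots
\end{equation*}
in $\rep_{\textnormal{es}}(M)$. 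The identification $J_P(I_{\overline{P}}^G(U)) = U(\delta_P)$ turns the connecting map into an element $\partial \in \l_M(U(\delta_P), H^1 J_P(V))$, and I would define $\beta([E]) := \partial$.

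Suppose now that $\beta([E]) = 0$, i.e., $\partial = 0$. Then the truncated sequence
\begin{equation*}
    0 \to J_P(V) \to J_P(E) \overset{J_P(q)}{\to} U(\delta_P) \to 0
\end{equation*}
is a strict short exact sequence in $\rep_{\textnormal{es}}(M)$, and the hypothesis $\Ext^1_M(U(\delta_P), J_P(V)) = 0$ forces it to split by an $M$-equivariant section $s: U(\delta_P) \to J_P(E)$ satisfying $J_P(q) \circ s = \text{id}_{U(\delta_P)}$. I would then lift $s$ to a $G$-equivariant map $\tilde s: I_{\overline{P}}^G(U) \to E$ via Emerton's adjunction (\ref{eq:Em_adjunction}). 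The composition $q \circ \tilde s$ corresponds under this adjunction to $J_P(q) \circ s = \text{id}_{U(\delta_P)}$, which in turn corresponds to $\text{id}_{I_{\overline{P}}^G(U)}$; the injectivity of the adjunction bijection then yields $q \circ \tilde s = \text{id}_{I_{\overline{P}}^G(U)}$, exhibiting $\tilde s$ as a splitting and proving $[E] = 0$.

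The main obstacle is verifying that the $M$-equivariant section $s$ is balanced in the sense of \cite{Jacquet2}, so that the adjunction (\ref{eq:Em_adjunction}) actually produces the lift $\tilde s$. The hypothesis that $U(\mathfrak{g}) \otimes_{U(\mathfrak{p})} U$ is irreducible is used crucially here: via Theorem \ref{thm:exact_IpG} it ensures that $E$ remains in the class of representations to which Emerton's adjunction applies, and it rigidifies the structure of $U(\delta_P)$ inside $J_P(E)$ enough that the balanced condition — namely, compatibility of $s$ with the canonical lift and the Hecke action of $Z_M^+$ — can be checked by transporting it along the relation $J_P(q) \circ s = \text{id}_{U(\delta_P)}$ from the already-balanced identity map. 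Combined with the vanishing $\Ext^1_M(U(\delta_P), J_P(V)) = 0$, which eliminates obstructions coming from $J_P(V)$, this should suffice to produce a balanced section and complete the proof of injectivity.
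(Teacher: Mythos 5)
Your proposal follows the paper's argument almost step for step: triviality of $\beta([E])$ yields a short exact sequence of Jacquet modules, the $\Ext^1$ vanishing splits it, the splitting is balanced thanks to the irreducibility of $U(\g)\otimes_{U(\p)}U$, and the adjunction lifts the splitting to a $G$-equivariant retraction of $q$. One small wobble in your phrasing: you describe the balance of $s$ as something to be ``transported'' along $J_P(q)\circ s=\mathrm{id}$ from the balanced identity map, and invoke Theorem \ref{thm:exact_IpG} for admissibility of $E$; in fact neither is quite the mechanism---the irreducibility of $U(\g)\otimes_{U(\p)}U$ renders the balanced condition \emph{vacuous}, so $s$ is trivially balanced with no transport argument needed, and $E$ is (very strongly) admissible simply because it is an extension of very strongly admissible representations. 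These are cosmetic slips rather than gaps; the proof is correct.
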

\begin{proof}
Suppose $[C]$ is a class in $\Ext^{1}_{G}(I_{\overline{P}}^{G}(U),V)$ that is trivial in $\l_{M}(U(\delta_{P}),H^{1}J_{P}(V))$. By assumption, there is a short exact sequence \[
    0 \ra J_P(V) \ra J_P(C) \ra U(\delta_P) \ra 0
\] that splits. Since $U(\g)\otimes_{U(\mathfrak{p})} U$ is assumed to be irreducible, the splitting $U(\delta_P)\ra J_P(C)$ is trivially balanced. The adjunction formula (\ref{eq:Em_adjunction}) gives us a $G$-equivariant linear map $I_{\overline{P}}^{G}(U) \ra C$, which splits the class $[C]$.
\end{proof}

\begin{corollary}
Suppose $G=SL_{2}(\qp)$. We adopt the notation established in the beginning of section \ref{sec:example}.  Suppose $\psi, \varphi$ are smooth characters of $T$. Let $k$ be a negative even integer and let $\ell$ be any even integer, where $k\neq \ell$. Then $\Ext^{1}_{G}(\ind_{\overline{P}}^{G}\chi_{k}\psi,I_{\overline{P}}^{G}(\chi_{\ell}\phi))$ is trivial unless $k=-(\ell+2)$ and \begin{itemize}
    \item $\phi\delta_P \neq \phi^{w}$ and $\psi = \phi$ (in which case it is one dimensional),
    \item $\phi\delta_P \neq \phi^{w}$ and $\psi\delta_P=\phi^{w}$ (in which case it is at most one dimensional),
    \item $\phi\delta_P = \phi^{w}$ and $\psi = \phi$ (in which case it is one or two dimensional),
    \item $\psi\delta_P=\phi^{w}$ (in which case it is at most one dimensional).
\end{itemize} 
\end{corollary}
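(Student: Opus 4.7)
The approach is to apply Proposition~\ref{prop:inj_beta} with $U=\chi_{k}\psi$ and $V=I_{\overline{P}}^{G}(\chi_{\ell}\phi)$, and then analyze the target of the resulting injection using the homological data already computed in Section~\ref{sec:example}. I would first verify the three hypotheses of the proposition. Since $k$ is a negative even integer, the Verma module $M(-k)$ is simple in $\mathcal{O}^{\overline{\mathfrak{p}}}$, hence $\ind_{\overline{P}}^{G}\chi_{k}\psi=\mathcal{F}_{\overline{P}}^{G}(M(-k),\psi)$ is irreducible, and consequently $I_{\overline{P}}^{G}(\chi_{k}\psi)=\ind_{\overline{P}}^{G}\chi_{k}\psi$. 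Example~\ref{ex:Jacquet_indchi} (the $k<0$, $i=0$ case) then gives $J_{P}(I_{\overline{P}}^{G}(\chi_{k}\psi))=\chi_{k}\psi\delta_{P}$. A direct $\mathfrak{sl}_{2}$ computation (highest weight vector $v_{0}$ with $Xv_{0}=0$, $Hv_{0}=kv_{0}$) shows that $U(\g)\otimes_{U(\p)}\chi_{k}\psi$ is an irreducible Verma module for $k<0$. Finally, using that $\Ext^{1}_{T}$ between two distinct locally analytic characters of $T\cong\qp^{\times}$ vanishes, together with the Jordan--H\"older factors of $J_{P}(I_{\overline{P}}^{G}(\chi_{\ell}\phi))$ listed in Examples~\ref{ex:Jacquet_indchi} and~\ref{ex:L(k)} (namely $\chi_{\ell}\phi\delta_{P}$, and also $\chi_{\ell}\phi^{w}$ when $\ell\geq 0$), the hypothesis $k\neq\ell$ makes the algebraic parts disagree and yields $\Ext^{1}_{T}(\chi_{k}\psi\delta_{P},J_{P}(I_{\overline{P}}^{G}(\chi_{\ell}\phi)))=0$.

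Proposition~\ref{prop:inj_beta} then produces an injection
\[
    \Ext^{1}_{G}(\ind_{\overline{P}}^{G}\chi_{k}\psi,\,I_{\overline{P}}^{G}(\chi_{\ell}\phi)) \inj \l_{T}\bigl(\chi_{k}\psi\delta_{P},\,H^{1}J_{P}(I_{\overline{P}}^{G}(\chi_{\ell}\phi))\bigr),
\]
and the task reduces to computing the target. If $\ell<0$, then $I_{\overline{P}}^{G}(\chi_{\ell}\phi)=\ind_{\overline{P}}^{G}\chi_{\ell}\phi$, and Example~\ref{ex:Jacquet_indchi} gives $H^{1}J_{P}=\chi_{-(\ell+2)}\phi^{w}$; since a non-zero $T$-hom from $\chi_{k}\psi\delta_{P}$ would force $k=-(\ell+2)\geq 0$, contradicting $k<0$, the $\Ext^{1}_{G}$ vanishes. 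If $\ell\geq 0$, applying the $\delta$-functor $H^{\star}J_{P}$ to (\ref{eq:loc_alg->ind->neg_wt}) and substituting the values from Examples~\ref{ex:Jacquet_indchi} and~\ref{ex:L(k)} yields a long exact sequence whose middle portion collapses to
\[
    0 \to \chi_{-(\ell+2)}\phi\delta_{P} \to H^{1}J_{P}(I_{\overline{P}}^{G}(\chi_{\ell}\phi)) \to \chi_{-(\ell+2)}\phi^{w} \to 0.
\]
Comparing algebraic parts, a non-zero element of the Hom forces $k=-(\ell+2)$, which is the non-vanishing condition of the corollary.

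Assuming $k=-(\ell+2)$, the four cases fall out of a direct analysis of $\l_{T}$ into the two-step extension above. When $\phi\delta_{P}\neq\phi^{w}$ the two Jordan--H\"older factors are distinct: the case $\psi=\phi$ identifies $\chi_{k}\psi\delta_{P}$ with the submodule and gives a one-dimensional Hom, the lower bound being realized by the class of the short exact sequence (\ref{eq:loc_alg->ind->neg_wt}) itself viewed in $\Ext^{1}_{G}(\ind_{\overline{P}}^{G}\chi_{k}\psi,I_{\overline{P}}^{G}(\chi_{\ell}\phi))$; the case $\psi\delta_{P}=\phi^{w}$ (with $\psi\neq\phi$) identifies $\chi_{k}\psi\delta_{P}$ with the quotient only, so the Hom is at most one-dimensional. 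When $\phi\delta_{P}=\phi^{w}$, the forced equality $\psi=\phi$ makes the target a self-extension of $\chi_{k}\phi\delta_{P}$, whose endomorphism space is one- or two-dimensional according as the extension is non-split or split. The main obstacle lies precisely in determining whether this self-extension splits (and similarly whether the mixed extension in the $\psi\delta_{P}=\phi^{w}$ case splits); since Proposition~\ref{prop:inj_beta} only provides an injection, the corollary can only assert the corresponding upper bounds in the non-generic cases, which matches its statement.
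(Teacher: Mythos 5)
Your proof is essentially correct and follows the same structure as the paper's argument: verify the hypotheses of Proposition~\ref{prop:inj_beta} using Example~\ref{ex:Jacquet_indchi} (and the fact that $I_{\overline{P}}^{G}(\chi_{k}\psi)=\ind_{\overline{P}}^{G}\chi_{k}\psi$ for $k<0$), compute $H^{1}J_{P}(I_{\overline{P}}^{G}(\chi_{\ell}\phi))$ from Examples~\ref{ex:Jacquet_indchi} and~\ref{ex:L(k)}, and then read off the dimension bounds from the injection $\beta$ combined with an explicit extension class giving the lower bound. The one place where you diverge from the paper's route is in how the lower bounds are produced: the paper invokes Corollary 8.8 and Theorem 8.9 of Kohlhaase together with the observation that the Orlik--Strauch representation $\F_{\overline{P}}^{G}(M(-(k+2)),\psi)$ furnishes a nontrivial class, whereas you bypass the Kohlhaase citation entirely and realize the lower bound directly from the short exact sequence (\ref{eq:loc_alg->ind->neg_wt}) applied with $\ell$ in place of $k$, which yields $0\to I_{\overline{P}}^{G}(\chi_{\ell}\psi)\to\ind_{\overline{P}}^{G}\chi_{\ell}\psi\to\ind_{\overline{P}}^{G}\chi_{k}\psi\to 0$. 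This is the same extension class phrased through the BGG sequence rather than through the Orlik--Strauch middle term, and it makes the argument slightly more self-contained; otherwise the two proofs are identical in content. Your closing remark correctly identifies the limitation of Proposition~\ref{prop:inj_beta} as the reason the non-generic bullet points can only assert upper bounds.
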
 
\begin{proof}
The fact that $k$ is negative, together with example \ref{ex:Jacquet_indchi} show that $U=\chi_{k}\psi$ satisfies the conditions of the previous proposition (also note that $I_{\overline{P}}^{G}(\chi_{k}\psi) = \ind_{\overline{P}}^{G}\chi_{k}\psi$ \cite[Ex. 5.1.9]{Jacquet2}). From example \ref{ex:Jacquet_indchi} and \ref{ex:L(k)}, \[
    H^1J_P(I_{\overline{P}}^{G}(\chi_{-(\ell+2)}\phi)) = \begin{cases} 
    \text{an extension of }\chi_{-(\ell+2)}\phi^{w}\text{ by }\chi_{-(\ell+2)}\phi\delta_P & \text{ if $\ell \geq 0$}\\
    \chi_{-(\ell+2)}\phi^{w} & \text{ if $\ell<0$} 
    \end{cases}.
\] The rest follows from Corollary 8.8 and Theorem 8.9 of \cite{Kohlhaase} along with the observation that $\F_{\overline{P}}^{G}(M(-(k+2)),\psi)$ is a non-trivial class in $\Ext^{1}_{G}(\ind_{\overline{P}}^{G}\chi_{k}\psi,I_{\overline{P}}^{G}(\chi_{-(k+2)}\phi))$. \end{proof}

\begin{proposition}\label{prop:inj_alpha}
Suppose $J_P(I_{\overline{P}}^{G}(U))=U(\delta_{P})$, $U(\g)\otimes_{U(\mathfrak{p})} U$ is irreducible and the identity map on $J_P(V)$ is balanced. Then there is a natural injective map \[
    \alpha: \Ext^{1}_{M}(U(\delta_{P}),J_{P}(V)) \ra \Ext^{1}_{G}(I_{\overline{P}}^{G}(U),V).
\]
\end{proposition}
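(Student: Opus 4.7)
Given a class $[E]\in\Ext^1_M(U(\delta_P),J_P(V))$ represented by a strict exact sequence
\[
    0\ra J_P(V)\ra E\ra U(\delta_P)\ra 0,
\]
I plan to twist by $\delta_P^{-1}$ and apply $I_{\overline{P}}^G$ to produce a strict exact sequence of $G$-representations
\[
    0\ra I_{\overline{P}}^G(J_P(V)\delta_P^{-1})\ra I_{\overline{P}}^G(E\delta_P^{-1})\ra I_{\overline{P}}^G(U)\ra 0.
\]
Since the identity map on $J_P(V)$ is balanced, the adjunction formula (\ref{eq:Em_adjunction}) provides a $G$-equivariant counit $\epsilon_V:\,I_{\overline{P}}^G(J_P(V)\delta_P^{-1})\ra V$. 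Pushing out the sequence above along $\epsilon_V$ yields
\[
    0\ra V\ra C\ra I_{\overline{P}}^G(U)\ra 0,
\]
and I would define $\alpha([E]):=[C]$. Functoriality in $[E]$ makes $\alpha$ a well-defined $K$-linear map.

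\textbf{Injectivity.} Suppose $\alpha([E])=0$, so that $C$ splits. By the universal property of the pushout there is a canonical $G$-map $\phi:\,I_{\overline{P}}^G(E\delta_P^{-1})\ra C$, whose image under $J_P$ is an $M$-map $J_P(\phi):\,J_P(I_{\overline{P}}^G(E\delta_P^{-1}))\ra J_P(C)$. The identity on $I_{\overline{P}}^G(E\delta_P^{-1})$ produces via (\ref{eq:Em_adjunction}) a unit $\eta_E:\,E\ra J_P(I_{\overline{P}}^G(E\delta_P^{-1}))$, and the composition $\eta:=J_P(\phi)\circ\eta_E:\,E\ra J_P(C)$ is an $M$-linear map. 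Using the triangular identities for the adjunction together with the hypothesis $J_P(I_{\overline{P}}^G(U))=U(\delta_P)$, I would check that $\eta$ is a morphism of extensions from $[E]$ to $[J_P(C)]$; on the subobject $J_P(V)\subset E$ one gets the identity via the counit-unit relation ensured by the balancedness hypothesis, and on the quotient $U(\delta_P)$ one gets the identity via the hypothesis $J_P(I_{\overline{P}}^G(U))=U(\delta_P)$. Since $J_P$ is additive, the splitting of $C$ forces $J_P(C)\cong J_P(V)\oplus U(\delta_P)$, so $[J_P(C)]=0$ and consequently $[E]=0$.

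\textbf{Main obstacle.} The principal difficulty is establishing exactness of the $I_{\overline{P}}^G$-sequence in the construction: Theorem \ref{thm:exact_IpG} as stated demands irreducibility of $U(\g)\otimes_{U(\p)}(\cdot)$ applied to the \emph{subobject} of the input sequence, whereas our hypothesis provides it only for the \emph{quotient} $U$. To overcome this, I plan to adapt the polynomial-part argument used in the proof of Theorem \ref{thm:exact_IpG}: combining the Bergdall-Chojecki isomorphism $U(\g)\otimes_{U(\p)}W\cong C^{\pol}(N,W)$ with the flatness of $U(\g)$ over $U(\p)$ from the PBW theorem would yield an exact sequence of polynomial parts at the identity stalk, and the polynomially-generated property (Definition 2.7.15 of \cite{Jacquet2}) should then lift this to an equality of local closed $G$-subrepresentations, identifying the kernel of $I_{\overline{P}}^G(E\delta_P^{-1})\ra I_{\overline{P}}^G(U)$ with $I_{\overline{P}}^G(J_P(V)\delta_P^{-1})$. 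A secondary technical point is confirming that the intermediate identity maps arising in the injectivity argument are balanced, so that the triangular identities apply in the required form within the balanced variant of the adjunction.
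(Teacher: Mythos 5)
Your construction of $\alpha$ is identical to the paper's: twist by $\delta_P^{-1}$, apply $I_{\overline{P}}^G$ using Theorem \ref{thm:exact_IpG}, and push out along the counit $I_{\overline{P}}^G(J_P(V)(\delta_P^{-1}))\to V$ supplied by the balancedness of $\mathrm{id}_{J_P(V)}$.

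For injectivity the two arguments share the same core idea (apply $J_P$ to the pushout diagram and recognize the original extension) but differ in execution. The paper stacks three exact rows --- the original extension, $J_P$ of (\ref{eq:apply_I}), and the $J_P$-long-exact sequence of the pushout --- checks that the outer vertical compositions are identities, and invokes the five lemma to identify the middle term with $C(\delta_P)$. You instead construct an explicit comparison map $\eta = J_P(\phi)\circ\eta_E : E \to J_P(C)$ and argue that it is a morphism of extensions via the triangular identities. Your route is formally more demanding: (\ref{eq:Em_adjunction}) is only an isomorphism after restricting to balanced maps on the $M$-side, so it is not a genuine unit--counit adjunction, and the naturality and triangular identities you invoke do not come for free. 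You flag this yourself as a ``secondary technical point,'' but it is exactly what the paper's five-lemma chase avoids by checking equalities directly on the relevant corners of the diagram. In short, your argument is correct in outline but would require more care to make the unit--counit formalism rigorous in the balanced setting, whereas the paper's version is more self-contained.

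Your ``main obstacle'' observation is a genuine and well-placed concern that the paper does not address. Theorem \ref{thm:exact_IpG} as stated demands irreducibility of $U(\g)\otimes_{U(\p)}(\cdot)$ evaluated on the \emph{subobject} of the input sequence, which here is $J_P(V)(\delta_P^{-1})$, while Proposition \ref{prop:inj_alpha} supplies irreducibility only for the \emph{quotient} $U$. The paper cites Theorem \ref{thm:exact_IpG} without reconciling the two. Your proposed remedy --- replaying the polynomial-stalk computation using the Bergdall--Chojecki isomorphism $U(\g)\otimes_{U(\p)}W\cong C^{\pol}(N,W)$ together with flatness of $U(\g)$ over $U(\p)$ --- is the right way to close this gap. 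Indeed, inspecting the paper's proof of Theorem \ref{thm:exact_IpG}, the irreducibility hypothesis does not appear to be used once the Bergdall--Chojecki isomorphism is invoked, so your fix amounts to observing that the exactness claim holds without that hypothesis.
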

\begin{proof}
First, we give the construction of the map $\alpha$. Given a class $[C]\in \Ext^{1}_{M}(U(\delta_{P}),J_{P}(V))$, we can twist by $\delta_{P}^{-1}$ and apply the functor $I_{\overline{P}}^{G}$ to obtain a short exact sequence (see Theorem \ref{thm:exact_IpG}) \begin{equation}\label{eq:apply_I}
    0 \ra I_{\overline{P}}^{G}(J_{P}(V)(\delta_{P}^{-1})) \ra I_{\overline{P}}^{G}(C(\delta_{P}^{-1})) \ra I_{\overline{P}}^{G}(U) \ra 0.
\end{equation} Our assumption on $J_P(V)$ ensures that the identity map on $J_P(V)$ corresponds to a $G$-equivariant linear map $I_{\overline{P}}^{G}(J_{P}(V)(\delta_{P}^{-1})) \ra V$ through the adjunction formula (\ref{eq:Em_adjunction}). Pushing forward the exact sequence (\ref{eq:apply_I}) along this map gives us a class in $\Ext^{1}_{G}(I_{\overline{P}}^{G}(U),V)$, which we define to be $\alpha([C])$. 

By applying the Jacquet-Emerton functor $J_P$ on (\ref{eq:apply_I}) and the exact sequence defining $\alpha([C])$, together with Lemma 0.3 of \cite{Jacquet2}, there is a commutative diagram of the form \begin{equation}\label{eq:inj_of_alpha}
\begin{tikzcd}[column sep = tiny]
    0 \arrow[r] & J_P(V)(\delta_P) \arrow[r] \arrow[d] & C(\delta_P) \arrow[r] \arrow[d] & U(\delta_P) \arrow[r] \arrow[d] &  0 \arrow[d] & \\
    0 \arrow[r] & J_P\left(I_{\overline{P}}^{G}(J_P(V)(\delta_P))\right) \arrow[r] \arrow[d] & J_P(I_{\overline{P}}^{G}(C(\delta_P))) \arrow[r] \arrow[d] & U(\delta_P) \arrow[r] \arrow[d] &  H^{1}J_P\left(I_{\overline{P}}^{G}(J_P(V)(\delta_P))\right) \arrow[r] \arrow[d] & ... \\
    0 \arrow[r] & J_P(V)(\delta_P) \arrow[r] & \alpha(C) \arrow[r] & U(\delta_P) \arrow[r] \arrow[l, bend right = 20] &  H^{1}J_{P}(\delta_P)) \arrow[r] & ... \\
\end{tikzcd}.    
\end{equation} Here, the horizontal rows are exact. By construction, the composition of the vertical maps on the very left is the identity map on $J_P(V)(\delta_P)$. The third vertical maps are all identity as well. By five lemma, $\alpha(C)\cong C(\delta)$. Therefore, $\alpha([C])$ splits if and only if $C$ splits. 
\end{proof}

\begin{remark}
Suppose the hypotheses of Proposition \ref{prop:inj_alpha} hold. Define the map $\beta$ in (\ref{eq:edge_morph2}) to be applying the Jacquet-Emerton module functor, as in Proposition \ref{prop:inj_beta}. The proof of Proposition \ref{prop:inj_alpha} also proves that the image of $\alpha$ is contained in the kernel of $\beta$. In order to show the reverse inclusion, we would need to be able to compare $[D]$ and $[I_{\overline{P}}^{G}(J_P(D))]$ for all $[D]$ in the kernel of $\beta$. One possible comparison comes from using the adjunction formula (\ref{eq:Em_adjunction}), but it would require the identity map on $J_P(D)$ to be balanced, which may not be true. 
\end{remark}


\vspace{3mm}

\begin{bibdiv}\begin{biblist} 

\bib{Amice}{article}{
   author={Amice, Yvette},
   title={Interpolation $p$-adique},
   journal={Bull. Soc. Math. France},
   volume={92},
   date={1964},
   pages={117--180},
   issn={0037-9484},
}

\bib{Bergall_Chojecki}{article}{
    AUTHOR = {Bergdall, John},
    AUTHOR = {Chojecki, Przemyslaw},
    TITLE = {An adjunction formula for the {E}merton-{J}acquet functor},
    JOURNAL = {Israel Journal of Mathematics},
    VOLUME = {223},
    YEAR = {2018},
    NUMBER = {1},
    PAGES = {1--52},
}

\bib{Bourbaki}{book}{
    AUTHOR = {Bourbaki, N.},
    TITLE = {Topological vector spaces. {C}hapters 1--5},
    SERIES = {Elements of Mathematics (Berlin)},
    PUBLISHER = {Springer-Verlag, Berlin},
    YEAR = {1987},
}

\bib{Breuil_socle2}{article}{
    AUTHOR = {Breuil, Christophe},
    TITLE = {Vers le socle localement analytique pour {${\rm GL}_n$} {II}},
    JOURNAL = {Mathematische Annalen},
    VOLUME = {361},
    YEAR = {2015},
    NUMBER = {3-4},
    PAGES = {741--785},
}

\bib{Breuil_Herzig_fs}{article}{
    AUTHOR = {Breuil, Christophe},
    AUTHOR = {Herzig, Florian},
    TITLE = {Towards the {F}inite {S}lope {P}art for {GL}n},
    JOURNAL = {International Mathematics Research Notices. IMRN},
    YEAR = {2020},
    NUMBER = {24},
    PAGES = {10495--10552},
}

\bib{Casselman}{article}{
    AUTHOR = {Casselman, William},
    TITLE = {Introduction to the theory of admissible representations of $p$-adic reductive groups},
    JOURNAL = {unpublished notes distributed by P. Sally},
    YEAR = {draft dated May 1, 1993},
}

\bib{analytic}{article}{
    AUTHOR = {Emerton, Matthew},
    TITLE = {Locally analytic vectors in representations of locally $p$-adic analytic groups},
    JOURNAL = {Memoirs of the American Mathematical Society},
    VOLUME = {248},
    YEAR = {2017},
    NUMBER = {1175},
    PAGES = {iv+158},
}

\bib{summary}{incollection}{
    AUTHOR = {Emerton, Matthew},
    TITLE = {Locally analytic representation theory of {$p$}-adic reductive groups: a summary of some recent developments},
    BOOKTITLE = {{$L$}-functions and {G}alois representations},
    SERIES = {London Math. Soc. Lecture Note Ser.},
    VOLUME = {320},
    PAGES = {407--437},
    PUBLISHER = {Cambridge Univ. Press, Cambridge},
    YEAR = {2007},
}

\bib{Jacquet1}{article}{    
    AUTHOR = {Emerton, Matthew},
    TITLE = {Jacquet modules of locally analytic representations of $p$-adic reductive groups. I. Construction and first properties},
    JOURNAL = {Annales Scientifiques de l'\'{E}cole Normale Sup\'{e}rieure. Quatri\`eme S\'{e}rie},
    VOLUME = {39},
    YEAR = {2006},
    NUMBER = {5},
    PAGES = {775--839},
}

\bib{Jacquet2}{article}{
    title={Jacquet modules of locally analytic representations of p-adic reductive groups II. The relation to parabolic induction},
    author={Emerton, Matthew},
    journal={J. Institut Math. Jussieu},
    year={2007},
    publisher={Citeseer}
}

\bib{Ordinary2}{article}{
    author={Emerton, Matthew},
    TITLE = {Ordinary parts of admissible representations of $p$-adic reductive groups II. Derived functors},
    JOURNAL = {Ast\'{e}risque},
    NUMBER = {331},
    YEAR = {2010},
    PAGES = {403--459},
}

\bib{Gersten}{article}{
    AUTHOR = {Gersten, S. M.},
    TITLE = {{$K$}-theory of free rings},
    JOURNAL = {Communications in Algebra},
    VOLUME = {1},
    YEAR = {1974},
    PAGES = {39--64},
}

\bib{Glaz}{book}{
    title={Commutative coherent rings},
    author={Glaz, Sarah},
    volume={1371},
    year={2006},
    publisher={Springer}
}

\bib{EGA}{article}{
    AUTHOR = {Grothendieck, Alexander},
    TITLE = {\'{E}l\'{e}ments de g\'{e}om\'{e}trie alg\'{e}brique. {III}. \'{E}tude cohomologique des faisceaux coh\'{e}rents. {II}},
    JOURNAL = {Institut des Hautes \'{E}tudes Scientifiques. Publications Math\'{e}matiques},
    NUMBER = {17},
    YEAR = {1963},
}

\bib{Weibo}{article}{
    AUTHOR = {Fu, Weibo},
    TITLE = {A derived construction of eigenvarieties},
    JOURNAL = {Submitted},
    YEAR = {2021},
}

\bib{Humphreys}{book}{
    AUTHOR = {Humphreys, James E.},
    TITLE = {Representations of semisimple {L}ie algebras in the {BGG} category {$\scr{O}$}},
    SERIES = {Graduate Studies in Mathematics},
    VOLUME = {94},
    PUBLISHER = {American Mathematical Society, Providence, RI},
    YEAR = {2008},
}

\bib{Knapp}{book}{
    AUTHOR = {Knapp, Anthony W.},
    TITLE = {Lie groups, {L}ie algebras, and cohomology},
    SERIES = {Mathematical Notes},
    VOLUME = {34},
    PUBLISHER = {Princeton University Press, Princeton, NJ},
    YEAR = {1988},
}

\bib{Komatsu}{article}{
    AUTHOR = {Komatsu, Hikosaburo},
    TITLE = {Projective and injective limits of weakly compact sequences of locally convex spaces},
    JOURNAL = {Journal of the Mathematical Society of Japan},
    VOLUME = {19},
    YEAR = {1967},
    PAGES = {366--383},
}

\bib{Kohlhaase}{article}{
    AUTHOR = {Kohlhaase, Jan},
    TITLE = {The cohomology of locally analytic representations},
    JOURNAL = {Journal f\"{u}r die Reine und Angewandte Mathematik},
    VOLUME = {651},
    YEAR = {2011},
    PAGES = {187--240},
}

\bib{Lazard}{article}{    
    AUTHOR = {Lazard, Michel},
    TITLE = {Groupes analytiques $p$-adiques},
    JOURNAL = {Institut des Hautes \'{E}tudes Scientifiques. Publications Math\'{e}matiques},
    NUMBER = {26},
    YEAR = {1965},
    PAGES = {389--603},
}

\bib{OS_Jordan_Holder}{article}{
    AUTHOR = {Orlik, Sascha},
    AUTHOR = {Strauch, Matthias},
    TITLE = {On {J}ordan-{H}\"{o}lder series of some locally analytic representations},
    JOURNAL = {Journal of the American Mathematical Society},
    VOLUME = {28},
    YEAR = {2015},
    NUMBER = {1},
    PAGES = {99--157},
}

\bib{Schneider_nfs}{book}{
    AUTHOR = {Schneider, Peter},
    TITLE = {Nonarchimedean functional analysis},
    SERIES = {Springer Monographs in Mathematics},
    PUBLISHER = {Springer-Verlag, Berlin},
    YEAR = {2002},
    PAGES = {vi+156},
}

\bib{ST_dist}{article}{
    AUTHOR = {Schneider, Peter},
    AUTHOR = {Teitelbaum, Jeremy},
    TITLE = {Algebras of $p$-adic distributions and admissible representations},
    JOURNAL = {Inventiones Mathematicae},
    VOLUME = {153},
    YEAR = {2003},
    NUMBER = {1},
    PAGES = {145--196},
}

\bib{ST_duality}{article}{
    AUTHOR = {Schneider, Peter},
    AUTHOR = {Teitelbaum, Jeremy},
    TITLE = {Duality for admissible locally analytic representations},
    JOURNAL = {Representation Theory. An Electronic Journal of the American Mathematical Society},
    VOLUME = {9},
    YEAR = {2005},
    PAGES = {297--326},
}

\bib{ST_GL2}{article}{
    AUTHOR = {Schneider, Peter},
    AUTHOR = {Teitelbaum, Jeremy},
    TITLE = {Locally analytic distributions and $p$-adic representation theory, with applications to $GL_2$},
    JOURNAL = {Journal of the American Mathematical Society},
    VOLUME = {15},
    YEAR = {2002},
    NUMBER = {2},
    PAGES = {443--468},
}

\bib{Schraen}{article}{
    AUTHOR = {Schraen, Benjamin},
    TITLE = {Repr\'{e}sentations localement analytiques de $GL_{3}(\qp)$},
    JOURNAL = {Annales Scientifiques de l'\'{E}cole Normale Sup\'{e}rieure. Quatri\`eme S\'{e}rie},
    VOLUME = {44},
    YEAR = {2011},
    NUMBER = {1},
    PAGES = {43--145},
}

\bib{ST_U(g)}{article}{
    AUTHOR = {Prasad, Dipendra},
    AUTHOR = {Schneider, Peter},
    AUTHOR = {Teitelbaum, Jeremy},
    TITLE = {{$U({\germ g})$}-finite locally analytic representations},
    JOURNAL = {Representation Theory. An Electronic Journal of the American Mathematical Society},
    VOLUME = {5},
    YEAR = {2001},
    PAGES = {111--128},
}

\bib{Serre_CBH}{article}{
    AUTHOR = {Serre, Jean-Pierre},
    TITLE = {Endomorphismes compl\`etement continus des espaces de {B}anach $p$-adiques},
    JOURNAL = {Institut des Hautes \'{E}tudes Scientifiques. Publications Math\'{e}matiques},
    NUMBER = {12},
    YEAR = {1962},
    PAGES = {69--85},
}

\bib{Swan}{article}{
    AUTHOR = {Swan, Richard G.},
    TITLE = {K-theory of coherent rings},
    JOURNAL = {Journal of Algebra and its Applications},
    VOLUME = {18},
    YEAR = {2019},
    NUMBER = {9},
}

\end{biblist}\end{bibdiv}

\end{document}